\newtheorem{theorem}{Theorem}[section]
\newtheorem*{theorem*}{Theorem}
\newtheorem{lemma}[theorem]{Lemma}
\newtheorem{proposition}[theorem]{Proposition}
\newtheorem{corollary}[theorem]{Corollary}
\newtheorem{conjecture}[theorem]{Conjecture}
\newtheorem*{conjecture*}{Conjecture}
\newtheorem{remark}[theorem]{Remark}
\newcommand{\wt}[1]{\widetilde{#1}}
\newcommand{\opname}[1]{\operatorname{\mathsf{#1}}}
\renewcommand{\mod}{\opname{mod}\nolimits}
\newcommand{\Mod}{\opname{Mod}\nolimits}
\newcommand{\per}{\opname{per}\nolimits}
\newcommand{\add}{\opname{add}\nolimits}
\newcommand{\der}{\cd}
\newcommand{\dimv}{\underline{\dim}\,}
\newcommand{\Z}{\mathbb{Z}}
\newcommand{\N}{\mathbb{N}}
\renewcommand{\P}{\mathbb{P}}
\newcommand{\Hom}{\opname{Hom}}
\newcommand{\go}{\opname{G_0}}
\newcommand{\RHom}{\opname{RHom}}
\newcommand{\Ext}{\opname{Ext}}
\newcommand{\End}{\opname{End}}
\newcommand{\ten}{\otimes}
\newcommand{\lten}{\overset{\boldmath{L}}{\ten}}
\newcommand{\ca}{{\mathcal A}}
\newcommand{\cc}{{\mathcal C}}
\newcommand{\cd}{{\mathcal D}}
\newcommand{\cf}{{\mathcal F}}
\newcommand{\ch}{{\mathcal H}}
\newcommand{\ct}{{\mathcal T}}
\newcommand{\cu}{{\mathcal U}}
\newcommand{\cv}{{\mathcal V}}
\renewcommand{\hat}[1]{\widehat{#1}}
\begin{document}

\title[c-vectors via $\tau$-tilting]{C-vectors via $\tau$-tilting theory}

\author{Changjian Fu}
\address{Changjian Fu\\Department of Mathematics\\SiChuan University\\610064 Chengdu\\P.R.China}
\email{changjianfu@scu.edu.cn}
\begin{abstract}
Inspired by the tropical dualities in cluster algebras, we introduce $c$-vectors for finite-dimensional algebras via $\tau$-tilting theory. Let $A$ be a finite-dimensional algebra over a field $k$.  Each  $c$-vector of $A$ can be realized as the (negative) dimension vector of certain indecomposable $A$-module and hence we establish the sign-coherence property for this kind of $c$-vectors. We then study the positive $c$-vectors for certain classes of finite-dimensional algebras including quasitilted algebras and cluster-tilted algebras. In particular, we recover the equalities of $c$-vectors for acyclic cluster algebras and skew-symmetric cluster algebras of finite type respectively obtained by N\'{a}jera Ch\'{a}vez.
 To this end,  a short proof for the sign-coherence of $c$-vectors for skew-symmetric cluster algebras has been given in the appendix.
\end{abstract}

\maketitle
\section{introduction}
The $c$-vectors and $g$-vectors introduced by Fomin-Zelevinsky~\cite{FZ07} are two kinds of integer vectors, which have played important roles in the theory of cluster algebras with coefficients. Both the vectors
are conjectured to have a so-called sign-coherence property ~\cite{FZ07}, which has been recently proved by Gross-Hacking-Keel-Kontsevich~\cite{GHKK14}.    For skew-symmetric cluster algebras, Nakanishi~\cite{N11} found the so-called tropical dualities between $c$-vectors and $g$-vectors  ({\it cf.} also~\cite{Keller12, Nagao13, Plamondon11} ).
With the assumption of sign-coherence of $c$-vectors, the tropical dualities between $c$-vectors and $g$-vectors has been further generalized to skew-symmetrizable cluster algebras by Nakanishi-Zelevinsky~\cite{NZ12}.
Moreover, they showed that many properties or conjectures of cluster algebras follow from the tropical dualities and hence follow from the sign-coherence of $c$-vectors.

  On the other hand, $c$-vectors may be seen as a generalization of root systems. It follows from Nagao's work~\cite{Nagao13} that  each $c$-vector of a given skew-symmetric cluster algebra can be realized as the (negative) dimension  vector of certain exceptional  module for the associated Jacobian algebra.  In particular, the set of $c$-vectors of an acyclic cluster algebra is a subset of the real  Schur roots for the corresponding Kac-Moody algebra. In ~\cite{Chavez13},  N\'{a}jera Ch\'{a}vez  showed the inverse inclusion is also true for acyclic cluster algebras ({\it cf.} also ~\cite{ST12}). Moreover, he also proved in ~\cite{Chavez14} that the set of  positive $c$-vectors of a skew-symmetric cluster algebra of finite type coincides with the set of dimension vectors of all the exceptional modules over the corresponding representation-finite cluster-tilted algebra.  Nakanishi-Stella ~\cite{NS14} gave a diagrammatic description of $c$-vectors for cluster algebras of finite type.  They proposed the root conjecture for any cluster algebras: for any skew-symmetrizable matrix $B$ any $c$-vector  of the cluster algebra $\ca(B)$ is a root of the associated Kac-Moody algebra $\mathfrak{g}(A(B))$ , where $A(B)$ is the Cartan counterpart of $B$. We refer to ~\cite{NS14} for more details on $c$-vectors of cluster algebras of finite type.

  In this paper, we pursue the representation-theoretic approach to study $c$-vectors.  We introduce the notion of $c$-vector for any finite-dimensional algebras via $\tau$-tilting theory ~\cite{AIR12} and study its relation with dimension vectors of indecomposable $\tau$-rigid and exceptional modules.  Let $A$ be a finite-dimensional algebra over a field $k$. In ~\cite{AIR12}, the authors showed that the indices $\opname{ind}(M,Q)$  of a basic support $\tau$-tilting pair $(M,Q)$ form a $\Z$-basis for the Grothendieck groups $\go(\per A)$ of the perfect derived category $\per A$. Let $\der^{b}(\mod A)$ be the bounded derived category of finitely generated right $A$-modules and $\langle-,-\rangle_A:\go(\per A)\times \der^{b}(\mod A)\to k$ the non-degenerate Euler bilinear form.
 We then define the $c$-vectors associated to $(M,Q)$ to be the dual basis of $\opname{ind}(M,Q)$ in $\go(\der^{b}(\mod A))$ with respect to  the Euler bilinear form $\langle-,-\rangle_A$.   Equivalently, one can define $c$-vectors as the dimension vectors of  $2$-term simple-minded collections~\cite{BY13} in $\der^b(\mod A)$ with respect to the canonical basis of simple $A$-modules.
   Using the bijection between $2$-term silting objects in $\per A$ and the intermediate $t$-structure on $\der^{b}(\mod A)$  in~\cite{KY12,BY13}, we show the sign-coherence property  holds for this kind of $c$-vectors.  If $A$ is a Jacobian-finite algebra, it follows from ~\cite{FuKeller10, BY13} and the tropical dualities that the $c$-vectors we obtained here do coincide with the one for the corresponding cluster algebra.
On the other hand, if $A$ is the preprojective algebra of Dynkin type $Q$, it follows from ~\cite{Mizuno14} that the set of $c$-vectors of $A$ coincides with the root system of the simple Lie algebra of  type $Q$.
Both the results suggest that it is worth investigating the $c$-vectors for a general finite-dimensional algebra and its possible connections with Lie theory. In the present paper, we mainly study the $c$-vectors for quasitilted algebras and show how to use the idea to recover the equalities of $c$-vectors for acyclic cluster algebras and skew-symmetric cluster algebras of finite type obtained by  N\'{a}jera Ch\'{a}vez.
In a forthcoming paper~\cite{FuGengLiu}, the idea will be applied to algebras arising from cluster tubes to establish a link between dimension vectors of indecomposable $\tau$-rigid modules and $c$-vectors of cluster algebras of type $\mathrm{C}$.

The paper is organized as follows: After recall some definitions and basic properties related to $\tau$-tilting theory in  Section ~\ref{S:recollection}, we introduce the definition of $c$-vectors for finite-dimensional algebras in Section ~\ref{S:c-vectors and its sign-coherence}. We show that each $c$-vector can be realized as the (negative) dimension vector  of certain indecomposable module and establish the  sign-coherence property for $c$-vectors. Moreover, the relationship between positive $c$-vectors and negative $c$-vectors are also given. Section~\ref{S:c-vectos of quasitilted algebras} is devoted to study the $c$-vectors for quasitilted algebras. We show that the set $\opname{cv}^{+}(A)$ of positive $c$-vectors for a quasitilted algebra $A$ coincides  with the set $\opname{exdv}(A)$ of dimension vectors of exceptional $A$-modules.
This generalizes the equalities for acyclic cluster algebras established by  N\'{a}jera Ch\'{a}vez~\cite{Chavez13}. Let us mention here that,
these equalities also implies that an indecomposable $A$-module $M$ of  the quasitilted algebra $A$ can be completed to a $2$-term simple-minded collection of $\der^b(\mod A)$ if and only if $M$ is an exceptional module  (we refer to ~\cite{BY13} for the definition of $2$-term simple-minded collections).
 In Section~\ref{S:c-vectors of rep-directed} and ~\ref{s:cluster-tilted},  we also establish the equalities  between  the set  $\opname{cv}^{+}(A)$ of positive $c$-vectors  and the set  $\opname{exdv}(A)$ of dimension vectors of exceptional modules for representation-directed algebras and cluster-tilted algebras of finite type respectively.
 After recall the basic definitions of cluster algebras in Appendix \ref{S:appendix}, we give an explanation of cluster algebra of finite type via $c$-vectors and
 a short proof for the sign-coherence of $c$-vectors for skew-symmetric cluster algebras.

{\it Notations.} Throughout this paper,  $k$ denotes  an algebraically closed field, $A$ a finite-dimensional basic $k$-algebra.  All modules are right modules.
Let $\cc$ be a category over $k$, for an object $M\in \cc$, denote by $\add M$ the full subcategory of $\cc$ whose objects are direct summands of finite direct sum of $M$.

{\bf Acknowledgements.} The author thanks Dong Yang for pointing out the bijection between $2$-term silting objects and intermediate $t$-structures to prove the sign-coherence of $c$-vectors.  He is also grateful to Alfredo N\'{a}jera Ch\'{a}vez and Tomoki Nakanishi  for their interests and helpful comments.  This work was partially supported by NSFC (No.11471224) and the Fundamental Research Funds for the Central Universities (No.2013SCU04A44).

\section{Recollection}~\label{S:recollection}
In this section, we recall  some definitions and basic properties of (support) $\tau$-tilting modules and ($2$-term) silting objects. We mainly follow~\cite{AIR12,AI12,KY12}.
\subsection{Support $\tau$-tilting modules}
Let $A$ be a finite-dimensional algebra over  $k$ and  $\mod A$  the category of finitely generated right $A$-modules.
Let $S_1,\cdots, S_n$ be all the pairwise non-isomorphic simple $A$-modules and $P_1,\cdots, P_n$ the corresponding projective covers of $S_1,\cdots, S_n$ respectively.
Denote by $\tau$  the Auslander-Reiten translation  of $\mod A$.

An $A$-module $M$ is called {\it rigid} if $\Ext_A^1(M,M)=0$.
A module $M\in \mod A$ is called {\it $\tau$-rigid} provided  $\Hom_A(M,\tau M)=0$.  Let $P_1^M\xrightarrow{f}P_0^M\to M\to 0 $ be a minimal projective resolution of $M$, then $M$ is $\tau$-rigid if and only if $\Hom_A(f,M)$ is surjective.
Note that $\tau$-rigid implies rigid, but the converse is not true in general.

A {\it $\tau$-rigid pair} is $(M,P)$ with $M\in \mod A$ and $P$  a finitely generated projective $A$-module, such that $M$ is $\tau$-rigid and $\Hom_A(P,M)=0$.
A $\tau$-rigid pair is called {\it support $\tau$-tilting pair} if $|M|+|P|=n$, where $|X|$ denotes the number of non-isomorphic indecomposable direct summands of $X$.
In this case, $M$ is a {\it support $\tau$-tilting} $A$-module and $P$ is uniquely determined by $M$.

Recall that
a full subcategory $\ct$ of $\mod A$  is   a {\it torsion class} of $\mod A$ provided that $\ct$ is closed under quotients and extensions. An object $X\in \ct$ is {\it  $\Ext$-projective} if $\Ext^1_A(X,\ct)=0$.  A torsion pair $(\ct,\cf)$ is uniquely determined by its torsion class $\ct$ in the sense that \[\cf=^\perp \ct:=\{N\in \mod A|\Hom_A(M,N)=0~\text{for all}~M\in \ct\}.\]
A torsion pair $(\ct,\cf)$ is {\it functorially finite} provided that $\ct$ is functorially finite, equivalently, there is an object $X\in \mod A$ such that $\ct=\opname{Fac} X$, where $\opname{Fac}X$ is the subcategory of $\mod A$ formed by   quotients of finite direct sum of $X$.  Let  $P(\ct)$ be the direct sum of one copy of each of the indecomposable $\Ext$-projective objects in $\ct$ up to isomorphism. It is well-known that $\ct=\opname{Fac}P(\ct)$.
The following result due to ~\cite{AS81} will be used implicitly ({\it cf.} Theorem $5.10$ in ~\cite{AS81}).
\begin{proposition}
Let $A$ be a finite-dimensional algebra over $k$.   If $M$  is a $\tau$-rigid $A$-module, then  $\opname{Fac}M$ is a functorially finite torsion class  and $M\in \opname{Fac}M$ is $\Ext$-projective.
\end{proposition}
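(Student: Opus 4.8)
The strategy is to pin down the one place where $\tau$-rigidity is actually used — a vanishing of $\Ext^1$ — and then to deduce both assertions from it by purely formal arguments. The key vanishing is: $\Ext^1_A(M,N)=0$ for every $N\in\opname{Fac}M$. I would get it from the Auslander--Reiten duality formula, which provides a natural isomorphism $D\Ext^1_A(M,N)\cong\overline{\Hom}_A(N,\tau M)$, where $D$ is the $k$-dual and $\overline{\Hom}$ denotes homomorphisms modulo those factoring through an injective. Pick an epimorphism $\pi\colon M^{r}\twoheadrightarrow N$; then for any $g\colon N\to\tau M$ the composite $g\pi\colon M^{r}\to\tau M$ has all of its components in $\Hom_A(M,\tau M)=0$, so $g\pi=0$, and since $\pi$ is an epimorphism, $g=0$. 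Hence $\Hom_A(N,\tau M)=0$, a fortiori $\overline{\Hom}_A(N,\tau M)=0$, and therefore $\Ext^1_A(M,N)=0$.

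Next I would verify that $\opname{Fac}M$ is a torsion class. Closure under quotients is immediate, since a quotient of a quotient of $M^{r}$ is again a quotient of $M^{r}$. For closure under extensions, take a short exact sequence $0\to X\to Y\to Z\to 0$ with $X,Z\in\opname{Fac}M$ and fix epimorphisms $M^{a}\twoheadrightarrow X$ and $M^{b}\twoheadrightarrow Z$. Since $\Ext^1_A(M^{b},X)\cong\Ext^1_A(M,X)^{b}=0$ by the first step, the map $\Hom_A(M^{b},Y)\to\Hom_A(M^{b},Z)$ is surjective, so the epimorphism $M^{b}\twoheadrightarrow Z$ lifts along $Y\to Z$ to a morphism $M^{b}\to Y$; combining it with $M^{a}\to X\hookrightarrow Y$ gives a morphism $M^{a+b}\to Y$ which is onto by an elementary diagram chase. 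Thus $Y\in\opname{Fac}M$.

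Both conclusions now follow at once. As $M$ is a quotient of itself, $M\in\opname{Fac}M$, and the first step gives $\Ext^1_A(M,\opname{Fac}M)=0$, i.e. $M$ is $\Ext$-projective in $\opname{Fac}M$. Moreover $\opname{Fac}M$ is a torsion class that is visibly of the form $\opname{Fac}X$ (take $X=M$), hence functorially finite by the equivalence for torsion classes recalled above; if one wants a direct check of one half, the $M$-trace $t_M(Y)=\sum_{f\in\Hom_A(M,Y)}\im f$ — a finite sum by finite-dimensionality, and so an object of $\opname{Fac}M$ — furnishes a right $\opname{Fac}M$-approximation of an arbitrary $Y\in\mod A$, which gives contravariant finiteness.

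The only substantive input is the vanishing in the first step, and the point to handle with care is that the Auslander--Reiten formula yields the vanishing of the \emph{stable} group $\overline{\Hom}_A(N,\tau M)$ rather than of $\Hom_A(N,\tau M)$; one resolves this by observing that the epimorphism argument in fact produces the stronger honest vanishing $\Hom_A(N,\tau M)=0$, which then dominates. Everything after that is routine manipulation of short exact sequences, so I do not anticipate a genuine obstacle.
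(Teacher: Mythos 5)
Your argument is sound, and it is worth saying up front that the paper itself offers no proof of this statement: it is quoted as Theorem 5.10 of Auslander--Smal{\o} [AS81]. What you have written is essentially the classical proof behind that citation. The one substantive input is exactly the vanishing you isolate: $\Hom_A(M,\tau M)=0$ forces $\Hom_A(N,\tau M)=0$ for every $N\in\opname{Fac}M$ (precompose with an epimorphism $M^{r}\twoheadrightarrow N$), and the Auslander--Reiten formula then kills $\Ext^1_A(M,N)$; your remark that you get the honest vanishing of $\Hom$, not merely of the stable $\overline{\Hom}$, disposes of the only delicate point there. Closure under quotients and extensions and the $\Ext$-projectivity of $M$ are then the routine consequences you describe. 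For functorial finiteness you lean on the equivalence recalled in the same subsection of the paper (a torsion class is functorially finite if and only if it is of the form $\opname{Fac}X$), which is legitimate since that is Smal{\o}'s theorem and is stated before the proposition; your trace construction $t_M(Y)=\sum_{f\in\Hom_A(M,Y)}\im f$ indeed gives the right approximations directly, but note that contravariant finiteness is automatic for any torsion class (the torsion submodule is a right approximation), so the genuinely nontrivial half is covariant finiteness, and that half you are delegating entirely to the cited equivalence rather than proving. Within the conventions of the paper this is perfectly acceptable; as a self-contained proof it would require importing Smal{\o}'s argument for the covariant half.
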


 Let $\opname{f-tors } A$ be the set of isomorphism classes of functorially finite torsion classes of $\mod A$ and $\opname{s\tau-tilt} A$ the set of isomorphism classes of basic support $\tau$-tilting $A$-modules.    The support $\tau$-tilting  $A$-modules  are closely related to the functorially finite torsion classes of $\mod A$.  In particular,  we have the following bijection ({\it cf.} Theorem $2.7$ of \cite{AIR12}).
\begin{theorem}~\label{t:bijection fftorsion-tautilting}
Let $A$ be a finite-dimensional algebra over $k$. There is a bijection between $\opname{s\tau-tilt} A$ and $\opname{f-tors} A$ given by
\[ M\in \opname{s\tau-tilt} A \mapsto \opname{Fac} M\in \opname{f-tors} A,
\]
and its inverse is given by $\ct\mapsto P(\ct)$, where $\ct\in \opname{f-tors}A$.
\end{theorem}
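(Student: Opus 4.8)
The plan is to verify the four statements that together make the two displayed assignments mutually inverse bijections: that $M\mapsto\opname{Fac}M$ lands in $\opname{f-tors} A$, that $\ct\mapsto P(\ct)$ lands in $\opname{s\tau-tilt} A$, and that $\opname{Fac} P(\ct)=\ct$ and $P(\opname{Fac}M)\cong M$. The first is immediate, since a basic support $\tau$-tilting module is in particular $\tau$-rigid and so the Proposition above (cf.\ Theorem~5.10 of~\cite{AS81}) makes $\opname{Fac}M$ a functorially finite torsion class; and $\opname{Fac} P(\ct)=\ct$ is exactly the quoted well-known fact. So the real content lies in the other two points, which both reduce to controlling how many indecomposable $\Ext$-projectives a functorially finite torsion class has.

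First I would record two elementary ingredients. The converse half of the Auslander--Smal\o{} circle (also in~\cite{AS81}): a basic module $X$ that is $\Ext$-projective in the torsion class $\opname{Fac}X$ is $\tau$-rigid; applied to $X=P(\ct)$, which is $\Ext$-projective in $\opname{Fac} P(\ct)=\ct$ by construction, this shows $P(\ct)$ is $\tau$-rigid. And a lifting observation: if $P$ is projective with $\Hom_A(P,X)=0$, then $\Hom_A(P,Y)=0$ for all $Y\in\opname{Fac}X$, because a surjection $X^{k}\twoheadrightarrow Y$ lifts any $P\to Y$ to $P\to X^{k}=0$. Since $\Hom_A(P_i,Y)\neq 0$ iff $S_i$ lies in the top of $Y$, and torsion classes are closed under quotients, this shows $\Hom_A(P,\ct)=0$ for $P:=\bigoplus_{S_i\notin\ct}P_i$, so that $(P(\ct),P)$ is a $\tau$-rigid pair. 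I would also use the rank bound $|N|+|Q|\le n$ valid for every $\tau$-rigid pair $(N,Q)$, a standard fact (the indices of $(N,Q)$ are linearly independent in $\go(\per A)\cong\Z^{n}$).

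Granting this, $P(\opname{Fac}M)\cong M$ is quick. Let $M$ be basic support $\tau$-tilting with companion projective $P$, and set $\ct=\opname{Fac}M$. The AS81 Proposition makes $M$ a basic $\Ext$-projective object of $\ct$, hence a direct summand of $P(\ct)$; write $P(\ct)=M\oplus N$ with $N\in\opname{Fac}M$. By the lifting observation $\Hom_A(P,N)=0$, so $(M\oplus N,P)$ is a $\tau$-rigid pair, and the rank bound gives $|M|+|N|+|P|\le n=|M|+|P|$, forcing $N=0$ and $P(\opname{Fac}M)=M$. In particular $M\mapsto\opname{Fac}M$ is injective, and it is surjective because $\opname{Fac} P(\ct)=\ct$.

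The remaining point, which I expect to be the main obstacle, is that $P(\ct)$ is \emph{support} $\tau$-tilting, not merely $\tau$-rigid, i.e.\ that $|P(\ct)|+\#\{i:S_i\notin\ct\}=n$; equivalently, $\ct$ must carry at least $\#\{i:S_i\in\ct\}$ indecomposable $\Ext$-projectives. The inequality $\le n$ is the rank bound applied to $(P(\ct),\bigoplus_{S_i\notin\ct}P_i)$; producing enough $\Ext$-projectives for the reverse inequality is the heart of the matter. Here I would take, for each $i$, a minimal left $\ct$-approximation $g_i\colon P_i\to T_i$ (it exists as $\ct$ is functorially finite), check that $T_i$ is $\Ext$-projective in $\ct$ using that $P_i$ is projective and $\ct$ is closed under quotients and extensions, that $T_i\neq 0$ exactly when $S_i\in\ct$, and that every indecomposable $\Ext$-projective of $\ct$ is a summand of some $T_i$ (by approximating it). The delicate bookkeeping is to see that the $\bigoplus_i T_i$ do not collapse; a clean route is to feed a Bongartz-type completion of $(P(\ct),\bigoplus_{S_i\notin\ct}P_i)$ into the already-proved identity $P(\opname{Fac}-)=\mathrm{id}$ to conclude the completion contributes nothing. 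With the count established, $P(\ct)$ is support $\tau$-tilting and the bijection follows.
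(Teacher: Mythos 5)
Your reduction to four claims is sound, and two of the three easy ones are handled correctly: $\opname{Fac}M\in\opname{f-tors}A$ via the Auslander--Smal{\o} proposition, $P(\ct)$ is $\tau$-rigid via the converse direction of the same equivalence ($\Hom_A(X,\tau X)=0$ iff $\Ext^1_A(X,\opname{Fac}X)=0$), and the argument for $P(\opname{Fac}M)\cong M$ via the rank bound for $\tau$-rigid pairs is fine (granting that bound, which the paper treats as known from \cite{AIR12}). Note that the paper itself offers no proof of this theorem --- it is quoted from Theorem 2.7 of \cite{AIR12} --- so the real comparison is with the argument there, which indeed runs through Auslander--Smal{\o} plus a counting/completion step; your outline agrees in spirit up to exactly that step.

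The counting step, which you yourself identify as the heart of the matter, is where the proposal breaks, and it breaks on a concrete false claim: $\Hom_A(P_i,Y)\neq 0$ iff $S_i$ is a \emph{composition factor} of $Y$, not iff $S_i$ lies in its top, and consequently $\Hom_A(P_i,\ct)=0$ is \emph{not} equivalent to $S_i\notin\ct$. Take $A$ the path algebra of $1\to 2$ and $\ct=\opname{Fac}P_1=\add(P_1\oplus S_1)$, where $P_1$ is the length-two projective with top $S_1$ and socle $S_2$. Then $S_2\notin\ct$ but $\Hom_A(P_2,P_1)\neq 0$, so your candidate pair $(P(\ct),P_2)$ is not even $\tau$-rigid; in fact $P(\ct)=P_1\oplus S_1$ is a $\tau$-tilting module with trivial companion projective, so the asserted count $|P(\ct)|+\#\{i:S_i\notin\ct\}=n$ fails ($2+1\neq 2$). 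The same example defeats the later bookkeeping: the minimal left $\ct$-approximations of $P_1$ and $P_2$ are both $P_1$, so ``$T_i\neq 0$ exactly when $S_i\in\ct$'' is false and the Ext-projective $S_1$ is not a summand of any $T_i$, contradicting ``every indecomposable Ext-projective of $\ct$ is a summand of some $T_i$.'' Finally, the proposed rescue --- Bongartz-complete $(P(\ct),\bigoplus_{S_i\notin\ct}P_i)$ and feed the result into $P(\opname{Fac}-)=\mathrm{id}$ --- does not close the gap even with the correct companion $\bigoplus_{\Hom_A(P_i,\ct)=0}P_i$: the Bongartz completion of $U$ generates the torsion class ${}^{\perp}(\tau U)$, which in general strictly contains $\ct$, so the identity $P(\opname{Fac}N)=N$ applied to the completion $N$ gives no information about the Ext-projectives of $\ct$ itself. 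What must be proved is $|P(\ct)|+\#\{i:\Hom_A(P_i,\ct)=0\}=n$, and that is precisely the nontrivial content of Theorem 2.7 of \cite{AIR12} (their Proposition 2.9 together with the Bongartz-type completion, Theorem 2.10); as written, your proposal leaves it unestablished.
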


\subsection{Silting objects}
Let  $\der^b(\mod A)$  be the bounded derived category of finitely generated right $A$-modules with suspension functor $\Sigma$. Recall that $n$ is the number of pairwise non-isomorphic simple $A$-modules.
Let $\per A$ be the perfect derived  category of $A$, that is the smallest thick subcategory of $\der^b(\mod A)$ containing the object $A$.  An object $Q\in \per A$ is called {\it presilting} if $\Hom_{\per A}(Q,\Sigma^i Q)=0$ for all $i>0$. A presilting object $Q\in \per A$ is called a {\it silting object }provided moreover  $\opname{thick}(Q)=\per A$, where $\opname{thick}(Q)$ is the smallest thick subcategory of $\per A$ containing $Q$. Each basic silting object has  exactly $n$ indecomposable direct summands~\cite{AI12}.
A presilting object $Q$ is called {\it almost silting} if the number of non-isomorphic indecomposable direct summands of $Q$ is $n-1$. If there is an indecomposable object $X\in \per A$ such that $P\oplus X$ is a silting object, then $X$ is called a {\it complement }of $P$. In general, an almost presilting object may have infinite complements.

Let $Q=X\oplus P$ be a basic silting object with $X$ indecomposable. Consider the triangle
\[X\xrightarrow{f}Q_1\to Y\to \Sigma X,
\]
where $f$ is a minimal left $\add P$-approximation of $X$. It has been shown in~\cite{AI12} that $Y\oplus P$ is a basic silting object and called {\it the left mutation} of $Q$ with respect to $X$. Dually, if we consider the triangle induced by a minimal right $\add P$-approximation of $X$, we obtain the right mutation of $Q$ with respect to $X$.

A silting object $Q\in \per A$ is {\it 2-term silting} if there is a triangle \[P_1^Q\to P_0^Q\to Q\to \Sigma P_1^Q, ~\text{where}~ P_0^Q,P_1^Q\in \add A.
\]
 Denote by $\opname{2-silt} A$ the set of isomorphism classes of 2-term silting objects of $\per A$.
The following   has been established in ~\cite{AIR12}.
\begin{theorem}~\label{t:bijection silting-tautilting}
Let $A$ be a finite-dimensional algebra over $k$.
\begin{itemize}
\item[(1)] Let $P$ be an almost $2$-term silting object in $\per A$, there  exists exactly two indecomposable objects $X,Y$ such that $P\oplus X$ and $P\oplus Y$ are $2$-term silting objects in $\per A$;  Moreover, $P\oplus X$ and $P\oplus Y$ are related by a left or right mutation;
 \item[(2)]There is a bijection between $\opname{s\tau-tilt} A$ and $\opname{2-silt} A$ given by
\[M\in \opname{s\tau-tilt} A\mapsto (P_1^M\oplus P\xrightarrow{(f,0)}P_0^M)\in \opname{2-silt} A,
\]
where $P_1^M\xrightarrow{f}P_0^M\to M$ is a minimal projective resolution of $M$ and $(M,P)$ is the  support $\tau$-tilting pair.
\end{itemize}
\end{theorem}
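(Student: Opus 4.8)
The plan is to prove part~(2) first and then deduce part~(1) from it, following \cite{AIR12}; the crux of (2) is to exhibit an explicit inverse to the displayed map and verify the two are mutually inverse, while (1) will come from transporting the silting mutation theory of \cite{AI12} across that bijection.

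For part~(2), given $M\in\opname{s\tau-tilt} A$ with support $\tau$-tilting pair $(M,P)$ and minimal projective presentation $P_1^M\xrightarrow{f}P_0^M\to M\to0$, set $Q:=(P_1^M\oplus P\xrightarrow{(f,0)}P_0^M)$ concentrated in degrees $-1,0$. Since all terms of $Q$ are projective, $\Hom_{\per A}(Q,\Sigma^i Q)$ is computed in the homotopy category, and for $i>0$ it can be nonzero only when $i=1$; a direct homotopy computation identifies $\Hom_{\per A}(Q,\Sigma Q)$ with a subquotient of $\Hom_A(P_1^M\oplus P,\,P_0^M)$ and shows it vanishes exactly when $\Hom_A(f,M)$ is surjective and $\Hom_A(P,M)=0$, i.e. exactly when $M$ is $\tau$-rigid (by the criterion recalled above) and $(M,P)$ is a $\tau$-rigid pair. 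Thus $Q$ is presilting iff $(M,P)$ is a $\tau$-rigid pair. Next, because the differential kills $P$ one has $Q\cong\Sigma P\oplus(P_1^M\xrightarrow{f}P_0^M)$, and minimality of $f$ forces the second summand to be a sum of exactly $|M|$ indecomposable two-term complexes with no summand of the form $\Sigma(\text{projective})$, so $Q$ has $|M|+|P|$ indecomposable summands. As every basic silting object of $\per A$ has exactly $n$ summands, $Q\in\opname{2-silt} A$ implies $|M|+|P|=n$, i.e. $(M,P)$ is support $\tau$-tilting; conversely, if $(M,P)$ is support $\tau$-tilting then $Q$ is presilting with $n$ summands, hence $\opname{thick}(Q)=\per A$ (a presilting object of $\per A$ with $n$ indecomposable summands is silting; alternatively one verifies each $P_i\in\opname{thick}(Q)$ directly). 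Finally, passing a $2$-term silting object to its minimal model, splitting off the maximal $\Sigma(\text{projective})$ summand $\Sigma P$, and taking $H^0$ of the remainder recovers a support $\tau$-tilting pair $(M,P)$; one checks that the assignment $Q\mapsto(H^0(Q),P)$ is a two-sided inverse of the displayed map, using that after those reductions the differential has the form $(f,0)$ with $f$ a minimal presentation of $H^0(Q)$.

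For part~(1), let $P$ be almost $2$-term silting. By the completion just obtained (or a Bongartz-type completion of the associated $\tau$-rigid pair) $P$ is a direct summand of some $2$-term silting object $P\oplus X$. Its left (resp. right) mutation at $X$, built from the triangle $X\to P'\to Y\to\Sigma X$ with $X\to P'$ a minimal left $\add P$-approximation, is again silting by \cite{AI12}; using the long exact cohomology sequence of this triangle together with the $2$-term-ness of $P$ and $P\oplus X$ and minimality of the approximation, one checks $Y$ is indecomposable and $P\oplus Y$ is again $2$-term, and similarly for the right mutation $P\oplus Z$. At most one of $Y,Z$ equals $X$, so exactly two distinct indecomposables among $X,Y,Z$ complete $P$. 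To see there are no others, transport along part~(2): a complement of $P$ corresponds to a completion of the almost complete $\tau$-rigid pair attached to $P$ to a support $\tau$-tilting pair, and there are exactly two such completions by the support $\tau$-tilting mutation theorem, which is proved on the torsion-class side via Theorem~\ref{t:bijection fftorsion-tautilting}; under part~(2) these two completions are the left and right mutations above.

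The main obstacle is the heart of (2) --- the explicit computation of $\Hom_{\per A}(Q,\Sigma Q)$ and its translation into the $\tau$-rigid pair condition --- and, for (1), the preservation of $2$-term-ness under silting mutation together with the ``exactly two complements'' assertion, the latter being essentially the support $\tau$-tilting mutation theorem and the deepest input.
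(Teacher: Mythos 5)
A preliminary remark on the comparison you asked for: the paper does not prove this theorem at all — it is quoted verbatim from \cite{AIR12} — so your proposal is in effect a reconstruction of the Adachi--Iyama--Reiten argument rather than an alternative to anything in the paper. For part (2) your outline is essentially theirs and is sound: the homotopy computation identifying $\Hom_{\per A}(Q,\Sigma Q)=0$ with surjectivity of $\Hom_A(f,M)$ together with $\Hom_A(P,M)=0$ is correct, as is the count of indecomposable summands via minimality of the presentation, and the inverse construction $Q\mapsto (H^0,\ \text{shifted projective part})$. The one step you should not present as a quotable fact is that ``a presilting object of $\per A$ with $n$ indecomposable summands is silting'': this is not a general property of presilting objects (it amounts to completability of presilting objects, which cannot simply be asserted), and the suggested alternative of checking each $P_i\in\opname{thick}(Q)$ directly is not routine. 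What the argument actually needs is the $2$-term version of this maximality statement, which is a theorem in \cite{AIR12} proved via the Bongartz-type completion of $\tau$-rigid pairs; it is a genuine input, not a formality.

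Part (1) contains a step that fails. It is not true that both the left and the right mutation of a $2$-term silting object at an indecomposable summand remain $2$-term: already for $A=k$, the right mutation of $A$ at $A$, and the left mutation of $\Sigma A$ at $\Sigma A$, leave the $2$-term region, and the same phenomenon occurs over any algebra. In fact, if both mutations $Y$ and $Z$ at $X$ were $2$-term and distinct, then $P$ would acquire at least three $2$-term complements $X,Y,Z$, contradicting the very statement being proved; so the count ``exactly two distinct indecomposables among $X,Y,Z$'' cannot work, and the correct assertion is the one in the theorem: the second complement is obtained by a left \emph{or} a right mutation, and which of the two occurs depends on the complement. Your fallback — transporting the ``exactly two completions'' statement for almost complete support $\tau$-tilting pairs through part (2) — is indeed how \cite{AIR12} argue, but that statement is not Theorem~\ref{t:bijection fftorsion-tautilting} (the bijection with functorially finite torsion classes); it is a separate and substantially harder mutation theorem of \cite{AIR12}, proved there by a careful analysis of the torsion classes attached to a $\tau$-rigid pair. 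Since that result is precisely the content of part (1), invoking it means the proposal reduces (1) to an equivalent statement rather than proving it; as a citation-level justification this is acceptable (and is what the paper itself does), but as a proof it leaves the deepest step, which you correctly identify, unaddressed, and the explicit mutation argument offered in its place is incorrect as written.
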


\subsection{$t$-structures on triangulated categories}
Let $\der$ be a triangulated category over $k$ with suspension functor $\Sigma$. A pair of full subcategory $(\cu^{\leq 0}, \cv^{{\geq 0}})$ of $\der$ is called a {\it  $t$-structure} on $\der$ provided that
\begin{itemize}
\item[(1)] $\Sigma \cu^{{\leq 0}}\subseteq \cu^{\leq 0}$;
\item[(2)] $\Hom_{\der}(\cu^{\leq 0}, \Sigma^{-1}\cv^{\geq 0})=0$;
\item[(3)]for each $X\in \der$, there is a triangle $U_X\to X\to V_X\to \Sigma U_X$ with $U_X\in \cu^{\leq 0}$ and $V_X\in \Sigma^{-1}\cv^{\geq 0}$.
 \end{itemize}
A {\it bounded $t$-structure} on $\der$ is a $t$-structure $(\cu^{\leq 0}, \cv^{\geq 0})$ such that \[\der=\bigcup_{n\in \Z}\Sigma^n\cu^{\leq 0}=\bigcup_{n\in \Z}\Sigma^n\cv^{\geq 0}.\]

For a given $t$-structure $(\cu^{\leq 0}, \cv^{\geq 0})$ on $\der$,   the subcategory $\ca=\cu^{\leq 0}\cap \cv^{\geq 0}$ of $\der$ is called {\it the heart } of $(\cu^{\leq 0},\cv^{\geq 0})$, which is an abelian category with the exact structure induced by the triangles of $\der$. Moreover, for any $X,Y\in \ca$, we have $\Hom_\ca(X,Y)=\Hom_\der(X,Y)$ and $\Ext^1_{\ca}(X,Y)=\Hom_\der(X,\Sigma Y)$.
The triangles in $(3)$ are canonical and  yield endofunctors $\tau_{\leq 0}$ and $\tau_{\geq 1}$ of $\der$ such that $\tau_{\leq 0}X=U_X$ and $\tau_{\geq 1}X= V_X$.
 The functors $\tau_{\leq 0}$ and $\tau_{\geq 1}$ give rise to a family  of cohomological functors $H^i=\tau_{\leq i}\circ \tau_{\geq i}:\der \to \ca$, where $\tau_{\leq i}=\Sigma^{-i}\circ \tau_{\leq 0}\circ \Sigma^i$ and $\tau_{\geq i}=\Sigma^{-i+1}\circ \tau_{\geq 1}\circ \Sigma^{i-1}$. Moreover, for each $X\in \der$, we have a family of triangles
\[\tau_{\leq i}X\to X\to \tau_{{\geq i+1}}X\to \Sigma \tau_{\leq i}X, ~\text{where}~i\in \Z.
\]

\subsection{Negative  dg algebra associated to a silting object}~\label{s:negative dg algebra}
Recall that $A$ is a finite-dimensional $k$-algebra. Let $T=\oplus_{i=1}^nT_i\in \per A$ be a basic silting object with indecomposable direct summands  $T_1, \cdots, T_n$ and $\widetilde{\Gamma}=\RHom_{A}(T,T)$ the dg endomorphism algebra of $T$.
By the definition of silting object, we know that the homology groups $H^{i}(\widetilde{\Gamma})$ vanish for all $i>0$. Denote by $\Gamma=\tau_{\leq 0}\widetilde{\Gamma}$  the truncation algebra  of $\widetilde{\Gamma}$.   Let $\lambda:\Gamma\to \widetilde{\Gamma}$ be the canonical injective homomorphism of dg algebras. It is clear that $\lambda$ induces an equivalence of derived categories $\der(\Gamma)\cong \der(\widetilde{\Gamma})$.
On the other hand, we also  have the surjective homomorphism  $\pi:\Gamma\to H^{0}(\Gamma)\cong \End_A(T)$ of dg algebras,  where $\End_{A}(T)$ is the endomorphism algebra of $T$ in $\per A$.

Let $e_i=1_{T_i}\in \Hom_{ A}(T_i,T_i), 1\leq i\leq n,$  be the primitive orthogonal idempotents in $\End_{A}(T)$, which will induce a decomposition  of the identity of $\Gamma$ into a sum of primitive idempotents. By abuse of notations, we  still denote the corresponding primitive idempotents by $e_1, \cdots, e_n$. Thus we have the decomposition of $\Gamma=\oplus_{i=1}^ne_i\Gamma$ into indecomposable right $\Gamma$-modules.  Moreover, the images $[e_{1}\Gamma], \cdots,[e_{n}\Gamma]$ form a $\Z$-basis of the Grothendieck group $\go(\per \Gamma)$ of the perfect derived category $\per \Gamma$ of $\Gamma$.
Let $S_{1}^{T}, \cdots, S_{n}^{T}$ be pairwise non-isomorphic simple right $\End_{A}(T)$-modules. Via the homomorphism $\pi$, each simple $\End_{A}(T)$-module $S_{i}^{T}$ lifts to a simple dg $\Gamma$-module $S_{i}^{\Gamma}$. Let $\der_{fd}(\Gamma)$ be the finite-dimensional derived category of $\Gamma$, that is the full triangulated subcategory of $\der(\Gamma)$ formed by the dg $\Gamma$-modules whose homology has finite total dimension over $k$.
Similarly, the images $[S_{1}^{\Gamma}], \cdots, [S_{n}^{\Gamma}]$ form a $\Z$-basis of the Grothendieck group $\go(\der_{fd}(\Gamma))$ of the finite-dimensional derived category $\der_{fd}(\Gamma)$ of $\Gamma$.
  Let $\langle-,-\rangle_{\Gamma}:\go(\per \Gamma)\times \go(\der_{fd}(\Gamma))\to k$ be the non-degenerate  Euler bilinear form given by
  \[\langle[P], [X]\rangle_{\Gamma}=\sum_{i\in \Z}\dim_{k}\Hom_{\Gamma}(P,\Sigma^{i }X),
  \]
  where $P\in \per \Gamma$ and $X\in \der_{fd}(\Gamma)$.
 For any $X\in \der(\Gamma),t\in \Z$,  we clearly have
 \[\Hom_{\Gamma}(\Gamma, \Sigma^{t}X)=\begin{cases}k& t=0;\\0&~\text{otherwise}. \end{cases}\]
 if and only if there exists a unique $i$ such that $X\cong S_{i}^{\Gamma}$ in $\der(\Gamma)$. In other words, $[e_{1}\Gamma], \cdots, [e_{n}\Gamma]$ and $[S_{1}^{\Gamma}], \cdots, [S_{n}^{\Gamma}]$ are dual bases with respect to the Euler bilinear form $\langle-,-\rangle_{\Gamma}$.

Recall that $T$ is a silting object in $\per A$, we have an equivalence $\der(\widetilde{\Gamma})\cong\der(\Mod A)$ and hence an equivalence between $\der(\Gamma)$ and $\der(\Mod A)$. Indeed, view $T$ as $\Gamma^{\opname{op}}\otimes_{k}A$-module, the equivalence is given by $F:=\lten_{\Gamma} T:\der(\Gamma)\to \der(\Mod A)$, which restricts to equivalences $\per \Gamma\cong\per A$ and $\der_{fd}(\Gamma)\cong \der^b(\mod A)$ respectively.

Since $\Gamma$ is a finite-dimensional negative dg algebra, there is a standard $t$-structure $(\der^{\leq 0}, \der^{\geq 0})$ on $\der_{fd}(\Gamma)$ induced by the homology whose heart is equivalent to $\mod \End_{A}(T)$. More precisely, \[\der^{\leq 0}=\{X\in \der_{fd}(\Gamma)|\Hom_{\Gamma}(\Gamma,\Sigma^{i}X)=0~\text{for all} ~i>0\},\]
\[~\der^{\geq 0}=\{X\in \der_{fd}(\Gamma)|\Hom_{\Gamma}(\Gamma,\Sigma^{i}X)=0~\text{for all}~i<0\}.
\]
The standard $t$-structure $(\der^{\leq 0}, \der^{\geq 0})$ induces a $t$-structure on $\der^b(\mod A)$ via the functor $F$. Denote by $(\der^{\leq 0}_T, \der^{\geq 0}_T)$ the resulting $t$-structure, that is
 \[\der_T^{\leq 0}=\{X\in \der^b(\mod A)|\Hom_A(T, \Sigma^i X)=0~\text{for all } i>0\},
 \]
 \[\der_T^{\geq 0}=\{X\in \der^b(\mod A)|\Hom_A(T, \Sigma^i X)=0~\text{for all} ~i<0\}.
 \]
  Let $\ca=\der_T^{\leq 0}\cap\der_T^{\geq 0}$ be the heart of the $t$-structure  $(\der_T^{\leq 0}, \der_T^{\geq 0})$. It is clear that $F(S_1^\Gamma), \cdots, F(S_n^\Gamma)$ are all the simple objects of $\ca$.
     If $T$ is a $2$-term silting object, by Theorem~\ref{t:bijection silting-tautilting} and \ref{t:bijection fftorsion-tautilting}, there is a support $\tau$-tilting module and a functorially finite torsion class corresponding to $T$ respectively.
 We  have the following characterization of $\ca$ by torsion pair, which is a consequence of the bijections investigated in ~\cite{KY12} ({\it cf.} also ~\cite{BY13}), for completeness and later use, we include a  proof.
 \begin{proposition}~\label{p:torsion pair}
 Keep the notations above.
Assume that $T$ is a 2-term silting object and $M\in \mod A$ is the associated support $\tau$-tilting $A$-module. Let $\ct_M=\opname{Fac} M$ be the functorially  finite torsion class associated to $M$ and $\cf_M=^\perp\ct_M$ the torsion free class. Then $(\Sigma \cf_M, \ct_M)$ is a torsion pair of $\ca$. As a consequence, each simple object of $\ca$ lies either in $\Sigma \cf_M$ or in $\ct_M$.
 \end{proposition}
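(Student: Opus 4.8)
The plan is to identify the heart $\ca$ with the Happel--Reiten--Smal\o{} tilt of $\mod A$ at the torsion pair $(\ct_M,\cf_M)$, and then to read off the asserted torsion pair as the canonical one on a tilted heart. First I would show that $(\der^{\leq 0}_T,\der^{\geq 0}_T)$ is \emph{intermediate} with respect to the standard $t$-structure (whose aisle I denote $\der^{\leq 0}_{\mathrm{std}}$), namely $\der^{\leq -1}_{\mathrm{std}}\subseteq\der^{\leq 0}_T\subseteq\der^{\leq 0}_{\mathrm{std}}$. Using the defining triangle $P_1^Q\to P_0^Q\to T\to\Sigma P_1^Q$ of the $2$-term silting object with $P_0^Q,P_1^Q\in\add A$, and the identity $\Hom_A(P,\Sigma^iX)=\Hom_A(P,H^i(X))$ valid for projective $P$ and any complex $X$, the long exact sequence obtained by applying $\Hom_A(-,\Sigma^iX)$ sandwiches $\Hom_A(T,\Sigma^iX)$ between $\Hom_A(P_1^Q,H^{i-1}(X))$ and $\Hom_A(P_0^Q,H^i(X))$. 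Feeding in $X\in\der^{\leq -1}_{\mathrm{std}}$ (so $H^j(X)=0$ for $j\geq 0$) kills both flanking terms for $i>0$, giving $\der^{\leq -1}_{\mathrm{std}}\subseteq\der^{\leq 0}_T$; feeding in $Y\in\der^{\geq 0}_{\mathrm{std}}$ kills both flanking terms for $i<0$, giving $\der^{\geq 0}_{\mathrm{std}}\subseteq\der^{\geq 0}_T$ and hence, by taking orthogonals, the second inclusion. Taking right orthogonals of the first inclusion yields $\der^{\geq 0}_T\subseteq\der^{\geq -1}_{\mathrm{std}}$, so every $Z\in\ca$ has $H^i(Z)=0$ for $i\neq -1,0$.

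Next I would pin down the two surviving cohomologies. Writing $N:=H^0(Z)$ and $F:=H^{-1}(Z)$, the standard truncation triangle reads $\Sigma F\to Z\to N\to\Sigma^2 F$. Since $Z,\Sigma^2F\in\der^{\leq 0}_T$, closure of the aisle under extensions forces $N\in\der^{\leq 0}_T$, while $N\in\mod A\subseteq\der^{\geq 0}_{\mathrm{std}}\subseteq\der^{\geq 0}_T$; thus $N\in\ca\cap\mod A$. A direct computation with the triangle for $T$ identifies $\ca\cap\mod A=\{X\in\mod A\mid\Hom_A(T,\Sigma X)=0\}=\{X\mid\Hom_A(X,\tau M)=0,\ \Hom_A(P,X)=0\}$, where I use the AR-type relation (cf.~\cite{AIR12}) that $\Hom_A(g,X)$ is surjective iff $\Hom_A(X,\tau M)=0$ for the minimal presentation $g$ of $M$; by the characterization of the torsion class of a support $\tau$-tilting pair in~\cite{AIR12} this set is exactly $\ct_M=\opname{Fac}M$, so $N\in\ct_M$. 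For $F$, applying $\Hom_A(T,-)$ to the $\Sigma^{-1}$-shift of the truncation triangle and using $Z\in\der^{\geq 0}_T$ (so $\Hom_A(T,\Sigma^{-1}Z)=0$) together with $\Hom_A(T,\Sigma^{-2}N)=0$ gives $\Hom_A(T,F)=0$; since $\Hom_A(T,F)=\Hom_A(M,F)$ for $F\in\mod A$ and $\cf_M={}^\perp\ct_M=\{F\mid\Hom_A(M,F)=0\}$, we conclude $F\in\cf_M$. This establishes $\ca=\{Z\mid H^0(Z)\in\ct_M,\ H^{-1}(Z)\in\cf_M,\ H^i(Z)=0\ (i\neq -1,0)\}$, and in particular $\Sigma\cf_M,\ct_M\subseteq\ca$.

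With this description the torsion pair is immediate. For orthogonality, for $F\in\cf_M$ and $N\in\ct_M$ one has $\Hom_\ca(\Sigma F,N)=\Hom_{\der^b(\mod A)}(\Sigma F,N)=\Ext^{-1}_A(F,N)=0$; I would stress that it is precisely the vanishing of negative $\Ext$ that selects the ordering $(\Sigma\cf_M,\ct_M)$ and not its reverse, since $\Hom_\ca(N,\Sigma F)=\Ext^1_A(N,F)$ need not vanish, so $(\ct_M,\Sigma\cf_M)$ is \emph{not} a torsion pair. For the canonical sequences, the truncation triangle $\Sigma F\to Z\to N\to\Sigma^2 F$ has all three terms in $\ca$, hence is a short exact sequence $0\to\Sigma H^{-1}(Z)\to Z\to H^0(Z)\to 0$ in $\ca$ with subobject in $\Sigma\cf_M$ and quotient in $\ct_M$. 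Thus $(\Sigma\cf_M,\ct_M)$ is a torsion pair. Finally, a simple object $S\in\ca$ admits such a sequence, and simplicity forces the subobject to be either $0$ (so $S\in\ct_M$) or all of $S$ (so $S\in\Sigma\cf_M$), giving the stated dichotomy.

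The step I expect to be the main obstacle is the identification $\ca\cap\mod A=\ct_M$ together with the dual membership $F\in\cf_M$: this is where the combinatorics of $\tau$-tilting theory must be matched against the purely homological condition defining the aisle, through the relation between surjectivity of $\Hom_A(g,-)$ and the vanishing of $\Hom_A(-,\tau M)$. The bookkeeping with shifts and with the left/right orthogonality of the aisle must also be handled carefully in order to land on the correct ordering of the pair, precisely because the reversed pair fails orthogonality.
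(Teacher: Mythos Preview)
Your proof is correct and follows essentially the same strategy as the paper: both arguments show that the heart $\ca$ is concentrated in cohomological degrees $-1,0$ with respect to the standard $t$-structure, and then verify that $H^0(Z)\in\ct_M$ and $H^{-1}(Z)\in\cf_M$ for every $Z\in\ca$, so that the standard truncation triangle gives the required torsion decomposition.

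The one substantive difference is in how the membership $H^0(Z)\in\ct_M$ is established. The paper argues directly: it takes the torsion decomposition $0\to T_{H^0(Z)}\to H^0(Z)\to F_{H^0(Z)}\to 0$ in $\mod A$, shows $\Hom_A(T,\Sigma^i F_{H^0(Z)})=0$ for all $i$, and concludes $F_{H^0(Z)}=0$ using $\opname{thick}(T)=\per A$. You instead identify $\ca\cap\mod A=\{X\mid\Hom_A(T,\Sigma X)=0\}$ and then invoke the characterization $\opname{Fac}M={}^\perp(\tau M)\cap Q^\perp$ from~\cite{AIR12} together with the Auslander--Bridger identity $D\opname{coker}\Hom_A(f,X)\cong\Hom_A(X,\tau M)$. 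Your route is slightly more conceptual (it names the construction as an HRS tilt and imports the needed facts), while the paper's is more self-contained; both are valid. Your treatment of the intermediate inclusions $\der^{\leq -1}_{\mathrm{std}}\subseteq\der^{\leq 0}_T\subseteq\der^{\leq 0}_{\mathrm{std}}$ via the sandwiching long exact sequence is also cleaner than the paper's two separate truncation-triangle arguments.
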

 \begin{proof} Let $P_1^M\xrightarrow{f}P_0^M\to M\to 0$ be a minimal projective resolution of $M$ and $(M, Q)$  the associated basic support $\tau$-tilting pair. We have \[T= \cdots\to0\to P_1^M\oplus Q\xrightarrow{(f,0)}P_0^M\to 0\cdots,\] where $P_0^M$ is in the zeroth component.
  Note that $X\in \ca$ if and only if $\Hom_A(T,\Sigma^i X)=0$ for $i\neq 0$. A direct calculation shows that  $\Sigma \cf_M\subset \ca$ and $\ct_M\subset \ca $.
 On the other hand, we clearly have $\Hom_A(\Sigma \cf_M, \ct_M)=0$. Note that the exact sequences of $\ca$ are induced from the triangles of $\der^b(\mod A)$.
 Thus to prove $(\Sigma \cf_M, \ct_M)$ is a torsion pair of $\ca$, it remains to show that for each $X\in \ca$, there is a triangle $\Sigma F_0\to X\to T_0\to \Sigma^2F_0$ in $\der^b(\mod A)$ with $T_0\in \ct_M$ and $F_0\in \cf_M$.

  Let $(\mathcal{C}^{\leq 0}, \mathcal{C}^{\geq 0})$ be the standard $t$-structure on $\der^b(\mod A)$ and $H^i$ the associated cohomological functors. We claim that if $X\in \ca\subset \der^b(\mod A)$, then   $H^i(X)=0$ for $i\neq 0,-1$.
 For any $X\in\ca\subset \der^b(\mod A)$, consider the following triangle
 \[\tau_{\leq -2}X\to X\to \tau_{\geq -1}X\to \Sigma \tau_{\leq -2}X\]
 induced by the standard $t$-structure $(\mathcal{C}^{\leq 0}, \mathcal{C}^{\geq 0})$.
Applying the functor $\Hom_A(T,?)$ yields a long exact sequence
\[\cdots\to \Hom_A(T, \Sigma^{i}\tau_{\leq -2}X)\to \Hom_A(T, \Sigma^{i}X)\to \Hom_A(T, \Sigma^{i}\tau_{\geq-1}X)\to   \cdots.\]
We have $\Hom_A(T, \Sigma^i \tau_{\leq -2}X)=0$ for all $i$ since $\Hom_A(T, \Sigma^iX)=0$ for all $i\neq 0$. Recall that we have $\opname{thick}(T)=\per A$, which implies that  $\tau_{\leq -2}X=0$ in $\der^b(\mod A)$. As a consequence,  $X\cong \tau_{\geq-1}X\in \Sigma\mathcal{C}^{\geq 0}$. Now consider the triangle
\[\tau_{\leq 0}X\to X\to \tau_{\geq 1}X\to \Sigma \tau_{\leq 0}X,
\]
applying  the functor $\Hom_A(T,?)$ to the triangle yields a long exact sequence
\[\cdots\to \Hom_A(T,\Sigma^i\tau_{\leq 0}X)\to \Hom_A(T,\Sigma^iX)\to \Hom_A(T,\Sigma^i \tau_{\geq 1}X)\to \cdots.
\]
Again one can show that $\Hom_A(T,\Sigma^i\tau_{\geq 1}X)=0$ for all $i$, and hence $\tau_{\geq 1}X=0$ in $\der^b(\mod A)$. In particular, we have proved that $X\cong \tau_{\leq 0}\circ \tau_{\geq -1}X\in \mathcal{C}^{\leq 0}\cap \Sigma\mathcal{C}^{\geq 0}$, which implies that $H^i(X)=0$ for $i\neq 0$ or $-1$.

By the  standard $t$-structure $(\cc^{\leq 0},\cc^{\geq 0})$, for each $X\in \ca$, we have the following triangle in $\der^b(\mod A)$
\[\Sigma H^{-1}(X)\to X\to H^0(X)\to \Sigma^2 H^1(X).
\]
 It remains to show that $H^0(X)\in \ct_M$ and $H^{-1}(X)\in \cf_M$ for $X\in \ca$. It is easy to see that $\Hom_A(T, \Sigma^i H^0(X))=0$ for all $i\neq 0$ and $\Hom_A(T, \Sigma^iH^{-1}(X))=0$ for all $i\neq 1$. Consider the short exact sequence \[0\to T_{H^0(X)}\to H^0(X)\to F_{H^0(X)}\to 0\] induced by the torsion pair $(\ct_M,\cf_M)$ in $\mod A$ with $T_{H^0(X)}\in \ct_M$ and $F_{H^0(X)}\in \cf_M$. Applying $\Hom_A(T,?)$ to the exact sequence, one can show that $\Hom_A(T, \Sigma F_{H^0(X)})=0$. Recall that we also have $\Hom_{A}(T,\Sigma^{i}F_{H^{0}(X)})=0$ for all $i\neq 1$. Consequently,  $F_{H^0(X)}=0$ in $\der^b(\mod A)$. In particular, we have $T_{H^0(X)}\cong H^0(X)\in \ct_M$. Similarly, one can show that $H^{-1}(X)\in \cf_M$. This finishes the proof.
 \end{proof}

\section{ c-vectors and its sign-coherence}~\label{S:c-vectors and its sign-coherence}
\subsection{Definition of $c$-vectors}
Recall that $A$ is a finite-dimensional algebra over $k$ and $n$ is the number of non-isomorphic simple $A$-modules.
Let $\go^{\text{sp}}(\add A)$ be the split Grothendieck group of finitely generated projective $A$-modules.
For a given $\tau$-rigid $A$-module $M$, let $P_1^M\xrightarrow{f}P_0^M\to M\to 0$ be a minimal projective resolution of $M$, the {\it index}  of $M$ is defined to be  $\opname{ind}(M)=[P_0^M]-[P_1^M]\in \go^{\text{sp}}(\add A)$.
The {\it $g$-vector}  of $M$ is $g(M)=(g_1,\cdots, g_n)'\in \Z^n$ with $g_i=[\opname{ind}(M):P_i], 1\leq i\leq n$.
It has been proved in ~\cite{AIR12} that different $\tau$-rigid $A$-modules have different indices and hence different $g$-vectors.

For a given  basic support $\tau$-tilting pair $(M,P)$ with decomposition of indecomposable modules $M=\bigoplus_{i=1}^tM_i, P=\bigoplus_{i=t+1}^nP_i^M$, we have the following {\it  $\opname{G}$-matrix } of $(M,P)$
\[\opname{G}_{(M,P)}=(g(M_1), g(M_2),\cdots, g(M_t), -g(P_{t+1}^M),\cdots, -g(P_n^M))\in M_n(\Z).
\]
We know from ~\cite{AIR12} that for any basic support $\tau$-tilting pair $(M,P)$ the $G$-matrix $\opname{G}_{(M,P)}$  is invertible over $\Z$.
Inspired by the tropical dualities between $g$-vectors and $c$-vectors in cluster algebras, we introduce the $\opname{C}$-matrix of a basic support $\tau$-rigid pair $(M,P)$ to be the inverse of the transpose of the $\opname{G}$-matrix $\opname{G}_{(M,P)}$, {\it i.e.}
\[\opname{C}_{(M,P)}:=(\opname{G}_{(M,P)}^T)^{-1}\in M_n(\Z).
\]
Each column vector of $\opname{C}_{(M,P)}$ is called a {\it $c$-vector} of $A$ and denote by $\opname{cv}(A)$ the set of all  the $c$-vectors of $A$.

\subsection{Sign-coherence of $c$-vectors}
A vector $c$ in $\Z^n$ is called {\it sign-coherence} if $c$ has either all  entries nonnegative or  all entries nonpositive. A non-zero vector in $\Z^n$ is {\it positive} (resp.
{\it negative}) if all components are nonnegative (resp. nonpositive). The sign-coherence phenomenon holds for this general setting.
\begin{theorem}~\label{t:sign-coherence}
Let $A$ be a finite-dimensional algebra over  $k$. Then each $c$-vector of $A$ is sign-coherence.
\end{theorem}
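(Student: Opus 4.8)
The plan is to realize $c$-vectors as dimension vectors (up to sign) of simple objects in the heart $\ca$ of the $t$-structure attached to the $2$-term silting object $T$, and then invoke the torsion pair description from Proposition~\ref{p:torsion pair} to conclude that each such simple object lives entirely in $\ct_M$ or entirely in $\Sigma\cf_M$, forcing its dimension vector to be nonnegative or nonpositive respectively.

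\medskip

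\textbf{Step 1: Identify $c$-vectors with $[F(S_i^\Gamma)]$ in the Grothendieck group.} Fix a basic support $\tau$-tilting pair $(M,P)$, let $T$ be the corresponding $2$-term silting object via Theorem~\ref{t:bijection silting-tautilting}, and let $\Gamma$ be the associated negative dg algebra as in Section~\ref{s:negative dg algebra}. The key observation is that the $G$-matrix $\opname{G}_{(M,P)}$ records exactly the classes $[T_i] = \opname{ind}(M_i)$ (resp. $-\opname{ind}(P_j^M)$) expressed in the basis $\{[P_1],\dots,[P_n]\}$ of $\go(\per A)$; equivalently it is the change-of-basis matrix from $\{[T_i]\}$ to $\{[P_i]\}$. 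Under the equivalence $F=\lten_\Gamma T\colon \der_{fd}(\Gamma)\iso\der^b(\mod A)$, the dual basis relation between $[e_i\Gamma]$ and $[S_i^\Gamma]$ established in Section~\ref{s:negative dg algebra} transports to a dual basis relation, with respect to the Euler form $\langle-,-\rangle_A$, between $\{[T_i]\}$ in $\go(\per A)$ and $\{[F(S_i^\Gamma)]\}$ in $\go(\der^b(\mod A))$. Writing the latter in the basis $\{[S_1],\dots,[S_n]\}$ of simple $A$-modules, a transpose-inverse bookkeeping identifies these coordinate vectors with the columns of $(\opname{G}_{(M,P)}^T)^{-1}=\opname{C}_{(M,P)}$, i.e. with the $c$-vectors. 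So each $c$-vector equals $\pm\dimv F(S_i^\Gamma)$ up to the sign ambiguity inherent in a class in $\go(\der^b(\mod A))$.

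\medskip

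\textbf{Step 2: Pin down the sign via the torsion pair.} The objects $F(S_i^\Gamma)$ are precisely the simple objects of the heart $\ca$. By Proposition~\ref{p:torsion pair}, $(\Sigma\cf_M,\ct_M)$ is a torsion pair of $\ca$, and each simple object of $\ca$ lies either in $\ct_M\subseteq\mod A$ or in $\Sigma\cf_M=\Sigma(^\perp\opname{Fac}M)$. If $F(S_i^\Gamma)\in\ct_M$, it is an honest $A$-module, so its class in $\go(\der^b(\mod A))$ is the (nonnegative) dimension vector of a module, hence a positive vector. If $F(S_i^\Gamma)=\Sigma N$ for some $0\neq N\in\cf_M$, then its class is $-\dimv N$, a negative vector. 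In either case the corresponding $c$-vector is sign-coherent. Since every $c$-vector of $A$ arises this way for some $(M,P)$, this proves the theorem; moreover it simultaneously yields the promised realization of each $c$-vector as the (negative) dimension vector of an indecomposable module, provided one also checks that the simple objects $F(S_i^\Gamma)$ are indecomposable — which is immediate as they are simple in the abelian category $\ca$, and indecomposability is preserved by passing between $\ca$ and $\mod A$ or $\Sigma(\mod A)$ since these are abelian and fully embedded.

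\medskip

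\textbf{Main obstacle.} The only genuinely delicate point is the bookkeeping in Step~1: matching the transpose-inverse in the definition $\opname{C}_{(M,P)}=(\opname{G}_{(M,P)}^T)^{-1}$ with the \emph{dual basis} characterization of the $[S_i^\Gamma]$, and checking that the duality $\go(\per A)\times\go(\der^b(\mod A))\to k$ transported through $F$ really is the Euler form $\langle-,-\rangle_A$ (this uses that $F$ is a triangle equivalence compatible with the two Euler forms, which follows since $F$ identifies $\per\Gamma\cong\per A$ and $\der_{fd}(\Gamma)\cong\der^b(\mod A)$ and Euler forms are computed by $\Hom$'s). Once the identification $c\text{-vectors}=\{\pm\dimv F(S_i^\Gamma)\}$ is in place, the sign-coherence is essentially a formal consequence of the torsion-pair statement already proved.
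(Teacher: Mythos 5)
Your argument is correct and is essentially the paper's own proof: you transport the dual-basis relation between $[e_i\Gamma]$ and $[S_i^\Gamma]$ through the equivalence $F=\lten_\Gamma T$ compatibly with the Euler forms, identify the $c$-vectors of $(M,P)$ with the classes $[F(S_i^\Gamma)]$ of the simples of the heart, and conclude sign-coherence from Proposition~\ref{p:torsion pair}, exactly as in the text. The only cosmetic difference is that you spell out the transpose-inverse bookkeeping and the indecomposability remark a bit more explicitly than the paper does.
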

\begin{proof}
Let $S_1,\cdots, S_n$ be all the pairwise non-isomorphic simple $A$-modules and $P_1,\cdots, P_n$ the corresponding projective covers of $S_1,\cdots, S_n$ respectively. Let  $\go(\per A)$  and $\go(\der^{b}(\mod A))$   be the Grothedieck groups of   $\per A$ and $\der^{b}(\mod A)$   respectively. Denote by $\langle-,-\rangle_A:\go(\per A)\times \go(\der^b(\mod A))\to k$ the Euler bilinear form given by  $\langle [P], [X]\rangle_A=\sum_{i\in \Z}\dim_k\Hom_A(P,\Sigma^i X)$ for any $P\in \per A $ and $X\in \der^b(\mod A)$. It is clear that $[P_{1}],\cdots, [P_{n}]$ and $[S_{1}], \cdots, [S_{n}]$ are dual bases with respect to the Euler bilinear form $\langle-,-\rangle_{A}$.

Let $(M,Q)$ be a basic support $\tau$-tilting pair  of $A$ and $T$ the corresponding $2$-term silting object in $\per A$.
Let $\Gamma=\oplus_{i=1}^{n}e_{i}\Gamma$ be the negative truncated  dg algebra associated to $T$ ({\it cf.} Section~\ref{s:negative dg algebra}). Recall that we also have the Euler bilinear form $\langle-,-\rangle_{\Gamma}:\go(\per \Gamma)\times \go(\der_{fd}(\Gamma))\to k$ and there is an equivalence of triangulated categories $F=\lten_{\Gamma}T_A:\der_{fd}(\Gamma)\to \der^{b}(\mod A)$.
 It is clear that the  functor $F$ induces an isomorphism  of bilinear forms such that the following diagram is commutative
\[\xymatrix{\langle-,-\rangle_{\Gamma}:\go(\per \Gamma)\times \go(\der_{fd}(\Gamma))\ar[d]^{F}\ar[drr]\\
\langle-,-\rangle_A:\go(\per A)\times \go(\der^b(\mod A))\ar[r] & &k.}
\]
Note that the column vectors the $G$-matrix associated to $(M,Q)$ is the dimension vector of $F(e_i\Gamma)$ in $\go(\per A))$ with respect to the basis $[P_{1}], \cdots,[P_{n}]$.
 By the duality between $G$-matrix and $C$-matrix, we deduce that  the  $c$-vectors associated to $(M,Q)$ are the dimension vectors of $F(S_{i}^{\Gamma})$ for all the simple dg $\Gamma$-module $S_{i}^{\Gamma}$.  Now the result follows from Proposition ~\ref{p:torsion pair}.
\end{proof}

As a byproduct of the proof, we have the following criterion  of $c$-vectors.
\begin{proposition}~\label{p:criterion}
Let $A$ be a finite-dimensional algebra over $k$. A vector $c\in \Z^n$ is a $c$-vector of $A$ if and only if there is a $2$-term silting object $T\in \per A$ and an indecomposable $A$-module $M$ satisfying one of the following conditions:
\begin{itemize}
\item[(1)]$\Hom_{ A}(T, \Sigma^i M)= \begin{cases}k & i=0;\\ 0& \text{otherwise}.\end{cases}$ and  $c=\dimv M$;
\item[(2)]$\Hom_{ A}(T, \Sigma^i M)=\begin{cases}k & i=1;\\ 0 & \text{otherwise}.\end{cases}$ and  $c=-\dimv M$.
\end{itemize}
\end{proposition}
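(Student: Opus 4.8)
The plan is to read off the statement directly from the proof of Theorem~\ref{t:sign-coherence}, where it was shown that the $c$-vectors associated to a basic support $\tau$-tilting pair $(M,Q)$ are exactly the dimension vectors $\dimv F(S_i^\Gamma)$ of the images under $F$ of the simple dg $\Gamma$-modules, where $\Gamma$ is the negative truncated dg algebra of the corresponding $2$-term silting object $T$ and $F=\lten_\Gamma T_A$. So both directions amount to translating the condition ``$X$ is one of the simple objects $F(S_i^\Gamma)$ of the heart $\ca=\der_T^{\le 0}\cap\der_T^{\ge 0}$'' into the cohomological conditions on $\Hom_A(T,\Sigma^\bullet M)$, and then invoking Proposition~\ref{p:torsion pair} to split these simple objects between $\ct_M$ (degree-zero part, case (1)) and $\Sigma\cf_M$ (degree-shift part, case (2)).

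First I would prove the ``only if'' direction. Given a $c$-vector $c$, pick a basic support $\tau$-tilting pair $(M',Q')$ one of whose associated $c$-vectors is $c$, and let $T$ be the corresponding $2$-term silting object. As recalled in Section~\ref{s:negative dg algebra}, the simple dg $\Gamma$-modules $S_i^\Gamma$ are characterized by $\Hom_\Gamma(\Gamma,\Sigma^t S_i^\Gamma)=k$ for $t=0$ and $0$ otherwise; applying the equivalence $F$ (under which $\Gamma$ corresponds to $T$ and $\der_{fd}(\Gamma)\cong\der^b(\mod A)$) transports this to $\Hom_A(T,\Sigma^i F(S_i^\Gamma))=k$ for $i=0$ and $0$ otherwise. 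By Proposition~\ref{p:torsion pair} each simple object of $\ca$ lies in $\ct_M\subset\mod A$ or in $\Sigma\cf_M$. In the first case $F(S_i^\Gamma)$ is an (indecomposable, since simple in $\ca$) module $M$, so $c=\dimv M$ and condition (1) holds with this $T$ and $M$. In the second case $F(S_i^\Gamma)=\Sigma M$ for an indecomposable module $M\in\cf_M$; then $\Hom_A(T,\Sigma^i F(S_i^\Gamma))=\Hom_A(T,\Sigma^{i+1}M)$, which is $k$ for $i=0$ i.e. $\Sigma^1 M$ and $0$ otherwise, while $c=\dimv(\Sigma M)=-\dimv M$ in $\go(\der^b(\mod A))$, which is condition (2).

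Conversely, suppose there is a $2$-term silting object $T$ and an indecomposable $M$ satisfying (1) or (2). Let $\Gamma$ be the negative truncated dg algebra of $T$ and $\ca$ the heart of $(\der_T^{\le 0},\der_T^{\ge 0})$; the simple objects of $\ca$ are precisely $F(S_1^\Gamma),\dots,F(S_n^\Gamma)$, and $c$ is a $c$-vector of the support $\tau$-tilting pair attached to $T$ iff it equals the dimension vector of one of them. In case (1), the hypothesis says $\Hom_A(T,\Sigma^i M)=k$ for $i=0$ and $0$ otherwise, which (pulling back along $F^{-1}$) is exactly the characterization of a simple dg $\Gamma$-module; hence $M\cong F(S_j^\Gamma)$ for some $j$ and $c=\dimv M$ is a $c$-vector. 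In case (2), the same hypothesis for $\Sigma M$ reads $\Hom_A(T,\Sigma^i(\Sigma M))=\Hom_A(T,\Sigma^{i+1}M)=k$ for $i=0$ and $0$ otherwise, so $\Sigma M\cong F(S_j^\Gamma)$ for some $j$ and $c=-\dimv M=\dimv(\Sigma M)$ is a $c$-vector. In both directions the only genuinely non-formal input is already available: the bijective correspondence between $2$-term silting objects and the hearts $\ca$ (Theorem~\ref{t:bijection silting-tautilting}) together with the simple-object characterization via $\Hom_A(T,\Sigma^\bullet-)$, and Proposition~\ref{p:torsion pair}.

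\textbf{Main obstacle.} The statement is essentially a repackaging of the proof of Theorem~\ref{t:sign-coherence}, so the only point requiring care is the bookkeeping of the degree shift in case (2): one must be sure that ``$M$ indecomposable with $\Hom_A(T,\Sigma^i M)=k$ concentrated in degree $1$'' is literally the same as ``$\Sigma M$ is one of the simple objects $F(S_j^\Gamma)$ of $\ca$,'' and that the sign in $\dimv(\Sigma M)=-\dimv M$ matches the negative $c$-vectors produced by the $C$-matrix. Beyond that, one should note that the indecomposability of $M$ is forced by simplicity in $\ca$ (a nonzero object of $\ca$ satisfying the rank-one $\Hom$ condition is simple in $\ca$, hence indecomposable in $\der^b(\mod A)$), so no separate argument for indecomposability is needed.
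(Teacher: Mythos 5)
Your proposal is correct and follows exactly the route the paper intends: the paper states Proposition~\ref{p:criterion} as a byproduct of the proof of Theorem~\ref{t:sign-coherence}, and your argument is precisely that byproduct made explicit, using the characterization of the simples $S_i^\Gamma$ via $\Hom_\Gamma(\Gamma,\Sigma^t-)$ transported along $F$, together with Proposition~\ref{p:torsion pair} to split into cases (1) and (2). The degree-shift and sign bookkeeping in case (2), and the remark that indecomposability comes for free from simplicity in the heart, are handled correctly.
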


\subsection{Positive $c$-vectors and negative $c$-vectors}
Let $\opname{cv}^+(A)$ be the set of positive $c$-vectors of $A$ and $\opname{cv}^-(A)$ the set of negative $c$-vectors.
The following result is a consequence of the bijection between  $2$-term silting objects and $2$-term simple-minded collections investigated in ~\cite{BY13}. In order to avoid  more notations, we give a proof using the mutation of silting objects.
\begin{proposition}~\label{t:equality}
Let $A$ be a finite-dimensional algebra over $k$. We have $\opname{cv}^{-}(A)=-\opname{cv}^+(A)$. In particular, $\opname{cv}(A)=-\opname{cv}^+(A)\cup\opname{cv}^+(A)$.
\end{proposition}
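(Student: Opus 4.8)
The plan is to prove the two inclusions $-\opname{cv}^{+}(A)\subseteq\opname{cv}^{-}(A)$ and $\opname{cv}^{-}(A)\subseteq-\opname{cv}^{+}(A)$, each by a single mutation of $2$-term silting objects, using Proposition~\ref{p:criterion} to pass between a $c$-vector and a pair $(T,M)$ consisting of a $2$-term silting object $T\in\per A$ and an indecomposable $A$-module $M$ detected by $T$ as in part $(1)$ or $(2)$ of that proposition. Once the main equality $\opname{cv}^{-}(A)=-\opname{cv}^{+}(A)$ is established, the last sentence follows at once: a $c$-vector is a column of an invertible integer matrix, hence nonzero, and is sign-coherent by Theorem~\ref{t:sign-coherence}, hence positive or negative, so $\opname{cv}(A)=\opname{cv}^{+}(A)\cup\opname{cv}^{-}(A)=\opname{cv}^{+}(A)\cup(-\opname{cv}^{+}(A))$.

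The first device I would isolate is a bookkeeping fact: if $T'\in\per A$ is a $2$-term silting object and $M\in\mod A$, then computing in a complex of projectives concentrated in degrees $-1,0$ that presents $T'$ shows $\Hom_{A}(T',\Sigma^{i}M)=0$ for $i\notin\{0,1\}$. Next, fix a $2$-term silting object $T=\bigoplus_{i=1}^{n}T_{i}$ with the $T_{i}$ indecomposable and an index $j$, and set $P=\bigoplus_{i\neq j}T_{i}$. By Theorem~\ref{t:bijection silting-tautilting}(1) there is exactly one complement $Z$ of $P$ other than $T_{j}$ with $P\oplus Z$ a $2$-term silting object, and $P\oplus Z$ is either the left mutation $T_{j}^{*}\oplus P$ or the right mutation ${}^{*}T_{j}\oplus P$ of $T$ at $T_{j}$. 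Applying $\Hom_{A}(-,\Sigma^{i}M)$ to the triangle defining each mutation and using $\Hom_{A}(P,\Sigma^{i}M)=0$ for all $i$, a one-line chase of the long exact sequence gives $\Hom_{A}(T_{j}^{*},\Sigma^{i}M)\cong\Hom_{A}(T_{j},\Sigma^{i-1}M)$ and $\Hom_{A}({}^{*}T_{j},\Sigma^{i}M)\cong\Hom_{A}(T_{j},\Sigma^{i+1}M)$. Thus if $\Hom_{A}(T_{j},\Sigma^{i}M)$ is concentrated in a single degree $d$, then on the new summand of the left mutation $\Hom_{A}(-,\Sigma^{i}M)$ is concentrated in degree $d+1$, and on that of the right mutation in degree $d-1$; whichever of $d+1,d-1$ lies outside $\{0,1\}$ rules that mutation out by the bookkeeping fact, so $Z$ is the remaining one.

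I would then execute the two inclusions. Given $c\in\opname{cv}^{+}(A)$, Proposition~\ref{p:criterion}(1) furnishes a $2$-term silting object $T=\bigoplus_{i}T_{i}$ and an indecomposable $M$ with $c=\dimv M$ and $\Hom_{A}(T,\Sigma^{i}M)$ equal to $k$ for $i=0$ and $0$ otherwise; exactly one summand $T_{j}$ then has $\Hom_{A}(T_{j},\Sigma^{i}M)\neq 0$, necessarily one-dimensional and concentrated in degree $0$, while $\Hom_{A}(P,\Sigma^{i}M)=0$ for all $i$. Taking $d=0$ in the previous paragraph, the right mutation would concentrate the relevant $\Hom$ in degree $-1\notin\{0,1\}$, so $P\oplus Z$ is the left mutation $T_{j}^{*}\oplus P$, for which $\Hom_{A}(-,\Sigma^{i}M)$ equals $k$ for $i=1$ and $0$ otherwise; Proposition~\ref{p:criterion}(2) then gives $-c=-\dimv M\in\opname{cv}^{-}(A)$. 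Symmetrically, given $-c\in\opname{cv}^{-}(A)$, Proposition~\ref{p:criterion}(2) yields $T$, $T_{j}$ and an indecomposable $M$ with $c=\dimv M$ and $\Hom_{A}(T_{j},\Sigma^{i}M)$ concentrated in degree $1$; now the left mutation would push the support to degree $2$, so $P\oplus Z$ is the right mutation ${}^{*}T_{j}\oplus P$, with $\Hom_{A}(-,\Sigma^{i}M)$ concentrated in degree $0$, and Proposition~\ref{p:criterion}(1) gives $c\in\opname{cv}^{+}(A)$.

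The step I expect to be the main obstacle is precisely this identification of which mutation at $T_{j}$ stays $2$-term: it forces one to combine Theorem~\ref{t:bijection silting-tautilting}(1) with the degree-window vanishing $\Hom_{A}(T',\Sigma^{i}M)=0$ for $i\notin\{0,1\}$, and to keep the degree shifts in the mutation triangles straight; everything else is formal. A more conceptual alternative would carry out the same step through the bijection of~\cite{BY13} between $2$-term silting objects and $2$-term simple-minded collections: the mutation above is the one that replaces the simple-minded summand $M$, a module lying in the torsion class of the associated heart, by its shift $\Sigma M$, which lies in the torsion-free class, and this is exactly what swaps the roles of $\opname{cv}^{+}$ and $\opname{cv}^{-}$ at that coordinate.
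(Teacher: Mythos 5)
Your proof is correct and follows essentially the same route as the paper: starting from Proposition~\ref{p:criterion}, you mutate the $2$-term silting object at the unique indecomposable summand detecting $M$ and chase the long exact sequence of the mutation triangle to shift the $\Hom$-concentration between degrees $0$ and $1$. The only divergence is how the ``wrong'' mutation is excluded -- the paper rules it out by invoking Proposition~\ref{p:torsion pair} (the would-be simple object $\Sigma^{-1}M$ cannot lie in the heart), whereas you use the elementary observation that a $2$-term complex of projectives satisfies $\Hom_{A}(T',\Sigma^{i}M)=0$ for $i\notin\{0,1\}$ for any module $M$; this is equally valid and slightly more self-contained, and you additionally write out the reverse inclusion and the ``in particular'' clause (via Theorem~\ref{t:sign-coherence}), which the paper leaves as ``similar''.
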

\begin{proof}
We show the inclusion $-\opname{cv}^+(A)\subseteq \opname{cv}^-(A)$. The inverse inclusion is similar.
Let $c$ be an arbitrary positive $c$-vector of $A$. Then there is a $2$-term silting object, say $T\in \per A$ and an indecomposable $A$-module $M$ such that $\Hom_A(T, M)=k$ and $\Hom_A(T, \Sigma^i M)=0$ for all $i\neq 0$. We may rewrite $T$ as $T=T_M\oplus Q$ with $T_M$ indecomposable such that $\Hom_A(T_M, M)=k, \Hom_A(T_M, \Sigma^iM)=0$ for $i\neq 0$ and $\Hom_A(Q, \Sigma^iM)=0$ for all $i\in \Z$. It is known that there is an indecomposable $2$-term presilting, say  $T_N$, such that $T'=T_N\oplus Q$ is a basic $2$-term silting object in $\per A$. By  Theorem~\ref{t:bijection silting-tautilting} $(1)$, we know that $T$ and $T'$ are related by a left or right mutation. We claim that $T'$ is the left mutation of $T$. Otherwise, $T$ is the left mutation of $T'$ and we have the triangle $T_N\to Q_1\to T_M\to \Sigma T_N$ with $Q_1\in \add Q$. Applying the functor $\Hom_A(?,M)$, we have a long exact sequence
\[\cdots\Hom_A(T_M, \Sigma^i M)\to \Hom_A(Q_1, \Sigma^i M)\to \Hom_A(T_N, \Sigma^i M)\to \Hom_A(T_M, \Sigma^{i+1}M)\cdots,
\]
which implies that $\Hom_A(T', \Sigma^{-1}M)=k$ and $\Hom_{A}(T', \Sigma^{i}M)=0$ for all $i\neq -1$. Let $\Gamma_{T'}$ be the negative dg algebra associated to $T'$. The conditions $\Hom_A(T', \Sigma^{-1}M)=k$ and $\Hom_{A}(T', \Sigma^{i}M)=0$ for all $i\neq -1$ imply that $\RHom_A(T', \Sigma^{-1}M)$ is a simple $\Gamma_{T'}$-module, which contradicts to Proposition~\ref{p:torsion pair}.

Therefore $T'$ has to be the  left mutation of $T$ and there is a triangle
 \[ T_M\to Q_2\to T_N\to \Sigma T_M, \text{where}~ Q_2\in \add Q.
\]
Applying the functor $\Hom_A(?, M)$, we obtain a long exact sequence
\[\cdots\Hom_A(T_N, \Sigma^i M)\to \Hom_A(Q_2, \Sigma^i M)\to \Hom_A(T_M, \Sigma^i M)\to \Hom_A(T_N, \Sigma^{i+1}M)\cdots.
\]
We have $\Hom_A(T_N, \Sigma M)=k$ and $\Hom_A(T_N, \Sigma^iM)=0$ for all $i\neq 1$. As a consequence, $\Hom_A(T', \Sigma M)=k$ and $\Hom_A(T', \Sigma^i M)=0$ for all $i\neq 1$. In particular, $-\dimv M$ is a negative $c$-vector by Proposition~\ref{p:criterion}   and we have $-\opname{cv}^+(A)\subseteq \opname{cv}^-(A)$.
\end{proof}

\subsection{The left-right symmetry of $c$-vectors}
Let $A^{\opname{op}}$ be the opposite $k$-algebra of $A$.  We have the dualities
\[D=\Hom_{k}(?,k): \mod A\to \mod A^{\opname{op}} ~\text{and }~(-)^{*}=\Hom_{A}(?,A):\add A\to \add  A^{\opname{op}}.
\]
For any $X\in \mod A$,  let \[P_{1}\xrightarrow{d} P_{0}\to X\to 0\]be a minimal projective resolution of $X$, its transpose $\opname{Tr}X\in \mod A^{\opname{op}}$ is defined by the following exact sequence
\[P_{0}^{*}\xrightarrow{d^{*}}P_{1}^{*}\to \opname{Tr}X\to 0.
\]
For any $M\in \mod A$, we can decompose $M$ as $M=M_{pr}\oplus M_{np}$, where $M_{pr}$ is a maximal projective direct summand of $M$. The following left-right symmetry of $\tau$-rigid modules has been established in ~\cite{AIR12} ({\it cf.} Theorem 2.14 of \cite{AIR12}).
\begin{theorem}~\label{t:left-rightsymmetry}
Let $A$ be a finite-dimensional $k$-algebra. There is a bijection $(-)^{\circ}$ between $\opname{s\tau-tilt}A$ and $\opname{s\tau-tilt}A^{\opname{op}}$ given by
$(M,Q)^{\circ}=(\opname{Tr}M_{np}\oplus Q^{*}, M_{pr}^{*} )$, where $(M,Q)$ is a support $\tau$-tilting pair of $A$. Moreover, $(-)^{\circ\circ}=\opname{id}$.
\end{theorem}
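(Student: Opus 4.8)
\smallskip
\noindent\textbf{Proof proposal.}
The plan is to verify directly that $(M,Q)\mapsto(M,Q)^{\circ}=(\opname{Tr}M_{np}\oplus Q^{*},\,M_{pr}^{*})$ carries support $\tau$-tilting pairs of $A$ to support $\tau$-tilting pairs of $A^{\opname{op}}$ and that it squares to the identity, using only the classical properties of the transpose $\opname{Tr}$, the duality $(-)^{*}\colon\add A\to\add A^{\opname{op}}$, and the description of $\tau$-rigidity recalled in Section~\ref{S:recollection}. (An alternative route, which I would mention but not carry out, transports the bijection $\opname{s\tau-tilt}A\cong\opname{2-silt}A$ of Theorem~\ref{t:bijection silting-tautilting} through the contravariant equivalence $\RHom_{A}(-,A)[1]\colon\per A\to\per A^{\opname{op}}$, which preserves silting objects and $2$-term complexes; unwinding the induced correspondence of pairs reproduces the formula for $(-)^{\circ}$ with $(-)^{\circ\circ}=\opname{id}$ built in.)

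The backbone is the natural isomorphism $\Hom_{A}(X,\tau Y)\cong D(X\otimes_{A}\opname{Tr}Y)$ for $X,Y\in\mod A$, obtained from $\tau Y=D\opname{Tr}Y$ by tensor--hom adjunction. From it I would draw three consequences. First, tensoring a minimal projective presentation $P_{1}\xrightarrow{f}P_{0}\to Y\to0$ with $X$, together with $X\otimes_{A}P^{*}\cong\Hom_{A}(P,X)$ for projective $P$, gives $\Hom_{A}(X,\tau Y)\cong D\operatorname{coker}\Hom_{A}(f,X)$, recovering in particular the $\tau$-rigidity criterion. Second, rewriting $X\otimes_{A}\opname{Tr}Y\cong\opname{Tr}Y\otimes_{A^{\opname{op}}}X$ and, when $X$ has no projective summand, $\opname{Tr}_{A^{\opname{op}}}\opname{Tr}_{A}X\cong X$, yields $\Hom_{A}(X,\tau Y)\cong\Hom_{A^{\opname{op}}}(\opname{Tr}Y,\tau_{A^{\opname{op}}}\opname{Tr}X)$. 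Third, taking $X=P$ projective gives $\Hom_{A^{\opname{op}}}(P^{*},\opname{Tr}Y)\cong D\Hom_{A}(P,\tau Y)$ (equivalently, dualize the presentation of $Y$ and apply the exact functor $\Hom_{A^{\opname{op}}}(P^{*},-)$).

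With these in hand, I would check that $(M,Q)^{\circ}$ is a support $\tau$-tilting pair of $A^{\opname{op}}$ in three steps. $\tau$-\emph{rigidity of} $\opname{Tr}M_{np}\oplus Q^{*}$: since $Q^{*}$ is projective, $\tau_{A^{\opname{op}}}Q^{*}=0$, so it remains to see $\Hom_{A^{\opname{op}}}(\opname{Tr}M_{np},\tau_{A^{\opname{op}}}\opname{Tr}M_{np})\cong\Hom_{A}(M_{np},\tau M_{np})=0$ and $\Hom_{A^{\opname{op}}}(Q^{*},\tau_{A^{\opname{op}}}\opname{Tr}M_{np})\cong D\Hom_{A}(Q,M_{np})=0$, using the second and third consequences together with the $\tau$-rigidity of $M$ and $\Hom_{A}(Q,M)=0$. \emph{Pair condition} $\Hom_{A^{\opname{op}}}(M_{pr}^{*},\opname{Tr}M_{np}\oplus Q^{*})=0$: its $Q^{*}$-component is $\Hom_{A}(Q,M_{pr})=0$ by duality of $(-)^{*}$, and its $\opname{Tr}M_{np}$-component is $D\Hom_{A}(M_{pr},\tau M_{np})=0$ by the third consequence. \emph{Count}: $\opname{Tr}$ preserves the number of indecomposable summands of a module with no projective summand, so $|\opname{Tr}M_{np}|=|M_{np}|$, whence $|\opname{Tr}M_{np}\oplus Q^{*}|+|M_{pr}^{*}|=|M_{np}|+|Q|+|M_{pr}|=|M|+|Q|=n$, \emph{provided} $\opname{Tr}M_{np}$ has no summand in common with the projective $Q^{*}$. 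Finally, $(-)^{\circ\circ}=\opname{id}$: once $\opname{Tr}M_{np}$ has no projective summand, $(\opname{Tr}M_{np}\oplus Q^{*})_{np}=\opname{Tr}M_{np}$ and $(\opname{Tr}M_{np}\oplus Q^{*})_{pr}=Q^{*}$, so a second application of $(-)^{\circ}$ gives $(\opname{Tr}\opname{Tr}M_{np}\oplus(M_{pr}^{*})^{*},(Q^{*})^{*})=(M_{np}\oplus M_{pr},Q)=(M,Q)$, using $\opname{Tr}\opname{Tr}\cong\opname{id}$ on modules without projective summands and $(-)^{**}\cong\opname{id}$ on $\add A$.

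The step I expect to be the main obstacle is precisely the one flagged twice above: that the transpose of a module with no projective direct summand again has no projective direct summand, equivalently that dualizing a minimal projective presentation yields a minimal projective presentation. This is what legitimizes the summand bookkeeping in the count and in $(-)^{\circ\circ}=\opname{id}$, and it requires a genuine (if short) argument that $(-)^{*}$ sends radical maps between projectives to radical maps and creates no new split epimorphisms, rather than a one-line observation.
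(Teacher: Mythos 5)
The paper does not actually prove this statement: it is imported verbatim from \cite{AIR12} (Theorem 2.14 there), so there is no in-paper proof to compare against. Judged on its own, your argument is correct and is essentially a reconstruction of the original one: the whole content is the compatibility of $\tau$-rigidity with the transpose, encoded in $\Hom_A(X,\tau Y)\cong D(X\otimes_A\opname{Tr}Y)$ and its two specializations $\Hom_A(X,\tau Y)\cong\Hom_{A^{\opname{op}}}(\opname{Tr}Y,\tau_{A^{\opname{op}}}\opname{Tr}X)$ (for $X$ without projective summands) and $\Hom_{A^{\opname{op}}}(P^{*},\opname{Tr}Y)\cong D\Hom_A(P,\tau Y)$, which is exactly the mechanism used in \cite{AIR12}; your alternative route through the duality $\Hom_A(-,A)$ on two-term complexes and Theorem~\ref{t:bijection silting-tautilting} is also viable and is the other standard way to see the symmetry. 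The step you flag as the main obstacle --- that $\opname{Tr}$ of a module with no projective direct summand again has no projective direct summand, equivalently that $(-)^{*}$ takes a minimal projective presentation of such a module to a minimal one, and that $\opname{Tr}\opname{Tr}\cong\opname{id}$ on modules without projective summands --- is a genuine ingredient but a completely standard one from Auslander--Reiten theory (see e.g.\ \cite{ASS06}, Chapter IV: $(-)^{*}$ preserves radical morphisms between projectives, and $\opname{Tr}$ induces a bijection on isomorphism classes of indecomposable non-projective modules), so it is not a gap. Two small points worth making explicit when you write this up: (i) these same facts also give that $\opname{Tr}M_{np}$ is basic and shares no summand with the projective module $Q^{*}$, which is what your summand count $|\opname{Tr}M_{np}\oplus Q^{*}|+|M_{pr}^{*}|=n$ silently uses; and (ii) bijectivity follows, as you indicate, because the same formula defines a map $\opname{s\tau-tilt}A^{\opname{op}}\to\opname{s\tau-tilt}A$ and both composites are the identity, the passage between support $\tau$-tilting modules and pairs being harmless since $Q$ is determined by $M$.
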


Let $M$ be an indecomposable non-projective $\tau$-rigid $A$-modules. By Theorem~\ref{t:left-rightsymmetry}, we infer that $\opname{Tr}M$ is also $\tau$-rigid as $A^{\opname{op}}$-module. Moreover, we clearly have $g(M)=-g( \opname{Tr}M)$. On the other hand, for any indecomposable projective $A$-module $P$, we also have $g(P)=-g(P^{*})$.
Now the following result is an immediate consequence of the definition of $c$-vectors and Theorem~\ref{t:left-rightsymmetry}, Theorem~\ref{t:sign-coherence} and Proposition \ref{t:equality}.
\begin{proposition}~\label{p:equality opposite}
Let $A$ be a finite-dimensional $k$-algebra and $A^{\opname{op}}$ its opposite algebra. Then we have $\opname{cv}(A)=-\opname{cv}(A^{\opname{op}})$ and $\opname{cv}^{+}(A)=\opname{cv}^{+}(A^{\opname{op}})$.
\end{proposition}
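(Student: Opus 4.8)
The plan is to transport the identities through the left–right symmetry bijection $(-)^{\circ}\colon \opname{s\tau-tilt}A\to\opname{s\tau-tilt}A^{\opname{op}}$ of Theorem~\ref{t:left-rightsymmetry} and to track its effect on $\opname{G}$-matrices. First I would fix a basic support $\tau$-tilting pair $(M,Q)$ of $A$, write $M=M_{pr}\oplus M_{np}$, and use that $\opname{Tr}$ annihilates projective summands, so that $(M,Q)^{\circ}=(\opname{Tr}M_{np}\oplus Q^{*},\,M_{pr}^{*})$ is again a basic support $\tau$-tilting pair, now of $A^{\opname{op}}$, with $\opname{Tr}M_{np}$ its non-projective part and $Q^{*}$ its projective part. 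The key bookkeeping is column-by-column: a non-projective indecomposable summand $M_i$ of $M$ contributes the column $g(M_i)$ to $\opname{G}_{(M,Q)}$, while its image $\opname{Tr}M_i$ contributes $g(\opname{Tr}M_i)=-g(M_i)$ to $\opname{G}_{(M,Q)^{\circ}}$; an indecomposable summand $Q_j$ of $Q$ sits in the support slot and contributes $-g(Q_j)$, whereas $Q_j^{*}$ becomes a projective summand of the module part over $A^{\opname{op}}$ and contributes $+g(Q_j^{*})$; and a projective summand $P_\ell$ of $M$ contributes $g(P_\ell)$ from the module slot, whereas $P_\ell^{*}$ lands in the support slot over $A^{\opname{op}}$ and contributes $-g(P_\ell^{*})$. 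Under the natural identification of the standard bases (so that $g(Q_j^{*})=g(Q_j)$ and $g(P_\ell^{*})=g(P_\ell)$), every column is negated, hence $\opname{G}_{(M,Q)^{\circ}}=-\opname{G}_{(M,Q)}\,\Pi$ for some permutation matrix $\Pi$.

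Passing to inverse transposes then gives $\opname{C}_{(M,Q)^{\circ}}=(\opname{G}_{(M,Q)^{\circ}}^{T})^{-1}=-\opname{C}_{(M,Q)}\,\Pi$, so the multiset of columns of $\opname{C}_{(M,Q)^{\circ}}$ is exactly the negation of the multiset of columns of $\opname{C}_{(M,Q)}$. Since $(-)^{\circ}$ is a bijection between basic support $\tau$-tilting pairs of $A$ and of $A^{\opname{op}}$, forming the union of the column sets over all pairs yields $\opname{cv}(A^{\opname{op}})=-\opname{cv}(A)$, which is the first asserted equality.

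For the second equality I would invoke sign-coherence. By Theorem~\ref{t:sign-coherence} every $c$-vector is positive or negative, and by Proposition~\ref{t:equality} we have $\opname{cv}^{-}(A)=-\opname{cv}^{+}(A)$, so $\opname{cv}(A)=\opname{cv}^{+}(A)\cup(-\opname{cv}^{+}(A))$, and similarly for $A^{\opname{op}}$. A nonzero integer vector cannot be simultaneously positive and negative, so $\opname{cv}^{+}(A)$ is recovered as the set of positive vectors lying in $\opname{cv}(A)$; combining this with $\opname{cv}(A)=-\opname{cv}(A^{\opname{op}})$ and the fact that negation interchanges positive and negative vectors gives $\opname{cv}^{+}(A)=-\opname{cv}^{-}(A^{\opname{op}})=\opname{cv}^{+}(A^{\opname{op}})$.

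The one point requiring care — the main (if modest) obstacle — is the sign bookkeeping in the first step: one must be precise that the informal relation $g(P)=-g(P^{*})$ for a projective $P$ really records a change of slot in the $\opname{G}$-matrix (module part versus support part) when passing to $A^{\opname{op}}$, combined with the genuine AR-theoretic identity $g(\opname{Tr}M)=-g(M)$ for non-projective $\tau$-rigid $M$; it is the interaction of the built-in sign on the support columns of $\opname{G}_{(M,Q)}$ with these slot changes that makes all columns negate uniformly, rather than producing a column-dependent diagonal sign. Everything else is formal.
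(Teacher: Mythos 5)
Your proposal is correct and follows the paper's own route: the paper likewise deduces the first equality from the bijection $(-)^{\circ}$ of Theorem~\ref{t:left-rightsymmetry} together with $g(\opname{Tr}M)=-g(M)$ and the sign change on projective summands, and then gets $\opname{cv}^{+}(A)=\opname{cv}^{+}(A^{\opname{op}})$ from Theorem~\ref{t:sign-coherence} and Proposition~\ref{t:equality}. Your explicit column-by-column bookkeeping, including reading the shorthand $g(P)=-g(P^{*})$ as a module-slot versus support-slot exchange in the $\opname{G}$-matrix, is precisely the detail the paper leaves implicit when it calls the statement an immediate consequence.
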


\section{c-vectors vs dimension vectors}
For an algebra $A$, let $\opname{dv}(A)$ be the set of dimension vectors of indecomposable $A$-modules. By Proposition~\ref{p:criterion}, we know that each positive $c$-vector can be realized as the dimension vector of an indecomposable $A$-module, that is, $\opname{cv}^{+}(A)\subseteq \opname{dv}(A)$.  However, the inverse inclusion is not true in general. The aim of this section is to study the positive $c$-vectors for  quasitilted algebras, representation-directed algebras and cluster-tilted algebras of finite type. In particular, we recover the equalities of $c$-vectors for acyclic cluster algebras and skew-symmetric cluster algebras of finite type respectively obtained by N\'{a}jera Ch\'{a}vez.

\subsection{$c$-vectors of quasitilted algebras}~\label{S:c-vectos of quasitilted algebras}
\subsubsection{Hereditary abelian categories}
We follow~\cite{Lenzing07}.
Throughout this section, let $\mathcal{H}$ be a hereditary abelian $k$-category with finite-dimensional morphism and extension spaces. As a consequence of finite-dimensional morphism space, $\mathcal{H}$ is a Krull-Schmidt category, {\it i.e.} each object of $\mathcal{H}$ is a finite direct sum of indecomposable objects with local endomorphism ring. We refer to ~\cite{Lenzing07} for examples and basic properties of hereditary categories.

Let $\der^{b}(\mathcal{H})$ be the bounded derived category of $\mathcal{H}$ with the suspension functor $\Sigma$.
An object $X$ in $\der^{b}(\mathcal{H})$ is called {\it rigid } provided  $\Hom_{\der^{b}(\mathcal{H})}(X,\Sigma X)=0$.  A rigid object $X$ is {\it exceptional} if $\dim_{k}\Hom_{\der^{b}(\ch)}(X,X)=1$.
The following fundamental result is due to Happel-Ringel~\cite{HR82} ({\it  cf.} also \cite{ASS06, Lenzing07}).
\begin{lemma}~\label{l:Happel-Ringel}
Let $E$ and $F$ be indecomposable objects in $\mathcal{H}$ such that $\Hom_{\der^{b}(\ch)}(F,\Sigma E)=0$. Then all non-zero homomorphism $f:E\to F$ is a monomorphism or epimorphism.  In particular, each indecomposable $E$ without self-extensions is exceptional.
\end{lemma}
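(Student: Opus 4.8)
The plan is to exploit the hereditary hypothesis on $\ch$ together with the Krull–Schmidt property, reducing everything to a statement about $\Hom$ and $\Ext^1$ in the abelian category $\ch$ itself. First I would take a non-zero $f\colon E\to F$ between indecomposables with $\Hom_{\der^b(\ch)}(F,\Sigma E)=\Ext^1_\ch(F,E)=0$, and form the image: $0\to K\to E\to I\to 0$ and $0\to I\to F\to C\to 0$ in $\ch$, where $I=\im f$, $K=\ker f$, $C=\cok f$. The goal is to show $K=0$ or $C=0$. Suppose for contradiction that both are non-zero.

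The key step uses that $\ch$ is hereditary, so $\Ext^2_\ch$ vanishes and the long exact sequences behave well. Applying $\Hom_\ch(F,-)$ to $0\to K\to E\to I\to 0$ and $\Hom_\ch(-,K)$ to $0\to I\to F\to C\to 0$, I would chase to relate $\Ext^1_\ch(F,E)$, $\Ext^1_\ch(F,I)$, $\Ext^1_\ch(I,K)$, and $\Ext^1_\ch(C,K)$; since $\ch$ is hereditary, $\Ext^1_\ch(C,K)$ surjects (via the connecting maps) onto a quotient that, combined with $\Ext^1_\ch(F,E)=0$, forces the extension $0\to I\to F\to C\to 0$ to be "compatible" with $0\to K\to E\to I\to 0$ in a way that produces a non-split situation. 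Concretely, one shows that the class of $0\to K\to E\to I\to 0$ in $\Ext^1_\ch(I,K)$ pulls back, along $I\hookrightarrow F$, and pushes out, along... — the cleanest route is: form the pullback of $E\to I\leftarrow F$ to get a module $G$ with a surjection $G\twoheadrightarrow F$ whose kernel is $K$ and a surjection $G\twoheadrightarrow E$; then $\Ext^1_\ch(F,K)\to \Ext^1_\ch(F,E)$ being relevant, and $\Ext^1_\ch(F,E)=0$, forces $G\cong F\oplus K$ up to the relevant identification, which then exhibits $E$ as a quotient of $F\oplus K$ in a manner contradicting indecomposability of $E$ (since both $K\neq 0$ and $C\neq0$ make the two summands both contribute nontrivially). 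The symmetric bookkeeping handles the epimorphism case. I expect the precise homological chase — choosing whether to pull back or push out and verifying the contradiction really uses $K\neq 0$ \emph{and} $C\neq 0$ simultaneously — to be the main obstacle; it is exactly where heredity (vanishing of $\Ext^2$) is indispensable.

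For the final sentence, let $E$ be indecomposable with $\Ext^1_\ch(E,E)=\Hom_{\der^b(\ch)}(E,\Sigma E)=0$. Taking $F=E$ in the first part, every non-zero endomorphism $f\colon E\to E$ is a monomorphism or an epimorphism; since $\ch$ is Krull–Schmidt, $\End_\ch(E)$ is local, so every non-zero endomorphism is in particular either nilpotent or invertible. A non-zero nilpotent endomorphism cannot be mono or epi (its power would be zero yet it would have to remain mono/epi), so every non-zero endomorphism is invertible, i.e. $\End_\ch(E)$ is a division ring; being finite-dimensional over the algebraically closed field $k$, it equals $k$, whence $\dim_k\Hom_{\der^b(\ch)}(E,E)=1$ and $E$ is exceptional.
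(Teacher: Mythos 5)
Your final paragraph (the ``in particular'' statement) is fine, and indeed the paper itself offers no proof of this lemma --- it quotes it from Happel--Ringel~\cite{HR82} --- so the benchmark is the standard argument: factor $f$ as $E\twoheadrightarrow I\hookrightarrow F$, use heredity to compare extensions, use $\Hom_{\der^{b}(\ch)}(F,\Sigma E)=\Ext^1_{\ch}(F,E)=0$ to split a suitable sequence, and finish with Krull--Schmidt. Your proposal has this skeleton, but the two decisive steps are wrong as written.

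First, ``the pullback of $E\to I\leftarrow F$'' is not defined: you have $p\colon E\twoheadrightarrow I$ and the inclusion $\iota\colon I\hookrightarrow F$, but no map $F\to I$, so there is no fibre product over $I$ involving $F$. The object you want arises differently: applying $\Hom_{\ch}(-,K)$ to $0\to I\to F\to C\to 0$ and using $\Ext^2_{\ch}(C,K)=0$ shows $\iota^{*}\colon\Ext^1_{\ch}(F,K)\to\Ext^1_{\ch}(I,K)$ is onto, so the class of $0\to K\to E\to I\to 0$ lifts to an extension $0\to K\to G\to F\to 0$. Second, and more seriously, your conclusion $G\cong F\oplus K$ does not follow: $\Ext^1_{\ch}(F,E)=0$ does not kill classes in $\Ext^1_{\ch}(F,K)$; the kernel of the pushforward $\Ext^1_{\ch}(F,K)\to\Ext^1_{\ch}(F,E)$ is the image of the connecting map $\Hom_{\ch}(F,I)\to\Ext^1_{\ch}(F,K)$, so vanishing of the image of $[G]$ only says $G$ is a pullback of $p$ along some map $F\to I$, not that the extension splits. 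The sequence whose class genuinely lies in $\Ext^1_{\ch}(F,E)$ is a different one: the morphism of extensions $(\mathrm{id}_K,\,E\to G,\,\iota)$ has bicartesian right-hand square, giving $0\to E\to G\oplus I\to F\to 0$, which splits by hypothesis, so $G\oplus I\cong E\oplus F$. Krull--Schmidt then says every indecomposable summand of $I\neq 0$ is isomorphic to $E$ or to $F$, which is impossible when $K\neq 0$ and $C\neq 0$: a summand $E$ would give an epi endomorphism of $E$ that is not invertible (else $p$ splits and is an isomorphism), hence nilpotent, hence zero on a nonzero object; dually for $F$. By contrast, your closing contradiction --- that $E$ is a quotient of $F\oplus K$ with both summands contributing --- is no contradiction at all: indecomposable objects are routinely quotients of decomposable ones. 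So the missing content is precisely which sequence is split by $\Ext^1_{\ch}(F,E)=0$ and how indecomposability is actually used.
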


Let $\cc$ be a full subcategory of $\der^{b}(\ch)$ and $M$ an indecomposable object in $\cc$. A path in $\cc$ from $M$ to itself is called a {\it cycle} in $\cc$, that is a sequence of non-zero non-isomorphism between indecomposable objects in $\cc$ of the form
\[M=M_0\xrightarrow{f_1}M_1\xrightarrow{f_2}M_2\cdots\xrightarrow{f_r}M_r=M.
\]
The following result is a consequence of Lemma~\ref{l:Happel-Ringel}, which is crucial for our  investigation of $c$-vectors for quasitilted algebras.
\begin{lemma}~\label{l:nocycle}
Let $T$ be an object in $\der^{b}(\ch)$ such that $\Hom_{\der^{b}(\ch)}(T,\Sigma T)=0$. Then the subcategory $\add T$ has no cycle.
\end{lemma}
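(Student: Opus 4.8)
The plan is to show that a cycle in $\add T$ would force a self-extension of $T$, contradicting the hypothesis $\Hom_{\der^b(\ch)}(T,\Sigma T)=0$. First I would reduce to the case where every object appearing in the cycle lies in $\ch$ rather than in a shifted copy of it. Since $\ch$ is hereditary, every indecomposable object of $\der^b(\ch)$ is of the form $\Sigma^j E$ for some indecomposable $E\in\ch$ and some $j\in\Z$; moreover there are no nonzero morphisms from $\Sigma^j E$ to $\Sigma^{j'} E'$ unless $j'\in\{j,j-1\}$ (this is the standard computation of $\Hom$ in the derived category of a hereditary category). If all summands of $T$ sat in a single shift $\Sigma^j$, then $\add T$ would be equivalent to a subcategory of $\ch$ itself; the general point is that the indecomposable summands of $T$ occupy at most two consecutive shifts, since a summand in shift $j$ and one in shift $j'$ with $|j-j'|\ge 2$ admit no morphisms either way, so no cycle can ``mix'' them. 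Thus, after twisting by a shift, I may assume the summands of $T$ lie in $\ch$ and $\Sigma\ch$ only.

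Next I would analyze a hypothetical cycle $M=M_0\xrightarrow{f_1}M_1\to\cdots\xrightarrow{f_r}M_r=M$ of nonzero, non-invertible maps between indecomposable summands of $T$. Consecutive terms $M_{i-1},M_i$ cannot differ by a shift of exactly one (a nonzero map $E\to\Sigma F$ with $E,F\in\ch$ would be a nonzero element of $\Ext^1_\ch(E,F)$, and applying this to summands of $T$ would contradict $\Hom(T,\Sigma T)=0$), so in fact all $M_i$ lie in the \emph{same} shift, i.e.\ I may assume $M_0,\dots,M_r\in\ch$. Now Lemma~\ref{l:Happel-Ringel} applies to each arrow: since $\Hom_{\der^b(\ch)}(M_i,\Sigma M_{i-1})=0$ (both being summands of $T$), every $f_i$ is either a monomorphism or an epimorphism in $\ch$, and it is not an isomorphism. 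I would then track the composite $f_r\circ\cdots\circ f_1\colon M\to M$. The key point is that in a hereditary Krull--Schmidt category a cycle of proper monos and epis through indecomposables cannot return to its starting object: a proper monomorphism strictly increases, and a proper epimorphism strictly decreases, an appropriate invariant (e.g.\ length, when objects have finite length, or more robustly the fact that a composite which is ``eventually'' all monos or all epis is itself a proper mono/epi hence nonzero and non-invertible). Concretely, one shows the total composite $M\to M$ is nonzero: either the composite is a monomorphism (if the ``net effect'' is injective) or an epimorphism, but in the endomorphism ring $\End_{\der^b(\ch)}(M)$, which is local, a non-isomorphism is nilpotent, so some power of the composite vanishes; feeding this back produces a zero composite of maps that were individually mono/epi, and a composite of monomorphisms is a monomorphism (hence nonzero if the objects are nonzero), a composite of epimorphisms is an epimorphism — giving a contradiction once one argues the cycle can be arranged to be all-mono or all-epi by grouping.

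The main obstacle is precisely this last combinatorial-homological step: a single cycle may alternate between monomorphisms and epimorphisms, so one cannot immediately say ``the composite is a mono'' or ``the composite is an epi.'' The standard way around this, which I would carry out, is the Harada--Sai type argument: among all cycles consider one of minimal length, and show that in a minimal cycle two consecutive arrows cannot be a mono followed by an epi or vice versa without allowing a shortening of the cycle (using that $\ch$ is hereditary and abelian, so pullbacks/pushouts along mono/epi pairs behave well and the relevant $\Ext^1$ between summands of $T$ vanishes, so any such ``detour'' object is again accessible); hence a minimal cycle consists entirely of monomorphisms or entirely of epimorphisms, and then the composite $M\to M$ is a proper mono or proper epi of an indecomposable object to itself, which is impossible (a proper mono $M\hookrightarrow M$ or proper epi $M\twoheadrightarrow M$ of a finite-length object cannot exist, and in general a proper mono composed with itself can never be an isomorphism while $\End(M)$ local forces non-isos to be nilpotent). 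This contradiction shows $\add T$ has no cycle.
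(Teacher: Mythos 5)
Your reduction to a single shift is where the argument breaks, and unfortunately that is exactly where the real content of the lemma lies. You claim that two consecutive objects of the cycle cannot differ by one shift because ``a nonzero map $E\to\Sigma F$ with $E,F\in\ch$ \dots applied to summands of $T$ would contradict $\Hom_{\der^{b}(\ch)}(T,\Sigma T)=0$.'' But if the two summands of $T$ in question are $E$ and $\Sigma F$ (not $F$), then a nonzero morphism $E\to\Sigma F$ is a morphism $T\to T$, not a morphism $T\to\Sigma T$; rigidity of $T$ only forbids nonzero maps $M_{i-1}\to\Sigma M_i$ when \emph{both} $M_{i-1}$ and $M_i$ are summands of $T$. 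Concretely, for $\ch=\mod kQ$ with $Q\colon 1\to 2$, the object $T=S_1\oplus\Sigma S_2$ satisfies $\Hom_{\der^b(\ch)}(T,\Sigma T)=0$, yet $\Hom_{\der^b(\ch)}(S_1,\Sigma S_2)=\Ext^1_{\ch}(S_1,S_2)\neq 0$. So a path in $\add T$ can perfectly well climb from one shift to the next, and your proof does not address cycles that mix shifts. The paper handles this case not by excluding such maps but by a monotonicity argument: since $\Hom_{\der^b(\ch)}(\Sigma^a\ch,\Sigma^b\ch)=0$ unless $b-a\in\{0,1\}$, the shift is non-decreasing along the cycle; hence if some $M_i$ leaves $\ch$ (with $M_0\in\ch$), then $M_{r-1}\in\Sigma^m\ch$ with $m>0$ and $\Hom_{\der^b(\ch)}(M_{r-1},M_0)=0$, contradicting $f_r\neq 0$. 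Without this (or some substitute), your proof only rules out cycles lying entirely in one shift.

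For the remaining case (all $M_i\in\ch$), your mono/epi analysis is in the right spirit but more complicated than necessary, and the ``minimal cycle/Harada--Sai'' shortening step is not actually carried out. The paper's route is direct: if an epimorphism $f_i$ is followed by a monomorphism $f_{i+1}$, the composite $f_{i+1}f_i\colon M_{i-1}\to M_{i+1}$ is nonzero and neither mono nor epi, contradicting Lemma~\ref{l:Happel-Ringel} (both endpoints are summands of $T$, so its hypothesis holds); since the sequence is cyclic, if both types of maps occur then some epi is followed by some mono after rotating the cycle, so in fact all $f_i$ are monos or all are epis. Then the total composite $M_0\to M_0$ is mono or epi, hence nonzero, and since $M_0$ is exceptional ($\End(M_0)=k$ by Lemma~\ref{l:Happel-Ringel}) it is an isomorphism, forcing $f_1$ to be an isomorphism, a contradiction. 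No minimality or nilpotency argument is needed. In summary: the heart case of your plan can be repaired along these lines, but the cross-shift case is a genuine gap that your rigidity argument cannot close.
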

\begin{proof}
Suppose that there is a cycle in $\add T$, say $M=M_{0}\xrightarrow{f_{1}}M_{1}\xrightarrow{f_{2}}\cdots\xrightarrow{f_{r}}M_{r}=M$, where $M_1,\cdots, M_{r} $ are indecomposable objects in $\add T$. We may assume that $M_{0}\in \mathcal{H}$. Note that $M_{0}$ is exceptional and  $f_{1}$ is non-zero non-isomorphism, which imply that $M_{1}\not\cong M_{0}$.
We claim that some of $M_{1},\cdots, M_{r-1}$ are not in $\mathcal{H}$. Otherwise,
by Lemma~\ref{l:Happel-Ringel}, each  $f_{i}$ is either monomorphism or epimorphism.  If there is an epimorphism $f_{i}$ followed by a monomorphism $f_{i+1}$, then $f_{i+1}\circ f_{i}:M_{i-1}\to M_{i+1}$ is non-zero and is neither a monomorphism nor an epimorphism, which contradicts to Lemma~\ref{l:Happel-Ringel}. On the other hand, if there is a monomorphism $f_{i}$ followed by an epimorphism $f_{i+1}$, we may consider the cycle
$M_{i}\xrightarrow{f_{i+1}}\cdots M_{r}\xrightarrow{f_{1}}M_{1}\xrightarrow{f_{2}}\cdots \xrightarrow{f_{i-2}}M_{i-1}\xrightarrow{f_{i}}M_{i}$. This cycle turns out to admit an epimorphism followed by a monomorphism, a contradiction.
Hence all of the $f_{i}$ are either epimorphisms or monomorphisms. Note that $\dim_{k}\Hom_{\der}(M_{0},M_{0})=1$, we deduce that $f_{r}\circ\cdots\circ f_{1}$ is an isomorphism. Consequently, $f_{1}$ is an isomorphism, a contradiction.

Note that $\mathcal{H}$ is hereditary, $\Hom_{\der^{b}(\ch)}(\mathcal{H}, \Sigma^{i}\mathcal{H})=0$ for all $i\neq 0,1$. Since we have assume that $M_{0}$ belongs to $\mathcal{H}$ and all $f_{i}$ are non-zero, we deduce that some of $M_{1}, \cdots, M_{r-1}$ belong to $\Sigma^{t} \mathcal{H}$ for certain $t>0$. In particular, $M_{r-1}$ belongs to $\Sigma^{m}\mathcal{H}$ for some $m>0$. Consequently, $\Hom_{\der}(M_{r-1}, M_{0})=0$, which contradicts to $f_{r}\neq 0$.
\end{proof}

\begin{corollary}~\label{c:nocycle}
Let $A$ be a finite-dimensional $k$-algebra such that $\mod A$ is equivalent to the heart of a bounded $t$-structure on $\der^{b}(\ch)$. Then the Gabriel quiver $Q_{A}$ of $A$ has no cycle.
\end{corollary}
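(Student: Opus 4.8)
The plan is to deduce the statement from Lemma~\ref{l:nocycle} applied to the algebra $A$ itself, regarded as an object of $\der^{b}(\ch)$ via the equivalence between $\mod A$ and the heart $\ca$ of the given bounded $t$-structure. First I would recall the standard properties of hearts noted in Section~\ref{S:recollection}: for $X,Y\in\ca$ one has $\Hom_{\ca}(X,Y)=\Hom_{\der^{b}(\ch)}(X,Y)$ and $\Ext^{1}_{\ca}(X,Y)=\Hom_{\der^{b}(\ch)}(X,\Sigma Y)$. Writing $T$ for the object of $\der^{b}(\ch)$ corresponding to the regular module $A_{A}$, projectivity of $A$ gives $\Hom_{\der^{b}(\ch)}(T,\Sigma T)=\Ext^{1}_{\mod A}(A,A)=0$, so Lemma~\ref{l:nocycle} applies and $\add T$ has no cycle. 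Since the equivalence $\mod A\simeq\ca$ carries $\add_{\mod A}A$ onto $\add_{\der^{b}(\ch)}T$ and preserves Hom-spaces, this says precisely that the full subcategory of indecomposable projective $A$-modules contains no cycle in the sense of Section~\ref{S:c-vectos of quasitilted algebras}.

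The remaining step is the elementary translation of an oriented cycle of $Q_{A}$ into a cycle among the indecomposable projectives. If $i\to j$ is an arrow of $Q_{A}$, then $S_{j}$ occurs in the top of $\operatorname{rad}P_{i}$, so the projective cover of $\operatorname{rad}P_{i}$ has $P_{j}$ as a summand; composing the resulting surjection with the inclusion $\operatorname{rad}P_{i}\hookrightarrow P_{i}$ yields a nonzero homomorphism $P_{j}\to P_{i}$ whose image is a proper submodule of $P_{i}$, hence a non-isomorphism even when $i=j$. (The opposite sign convention for $Q_{A}$ is handled symmetrically.) Given an oriented cycle $i_{0}\to i_{1}\to\cdots\to i_{r}=i_{0}$ in $Q_{A}$, the associated maps assemble into a chain
\[
P_{i_{0}}=P_{i_{r}}\longrightarrow P_{i_{r-1}}\longrightarrow\cdots\longrightarrow P_{i_{1}}\longrightarrow P_{i_{0}}
\]
of nonzero non-isomorphisms between indecomposable summands of $T$, that is, a cycle in $\add T$. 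This contradicts the first paragraph, so $Q_{A}$ has no oriented cycle.

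I do not anticipate a genuine obstacle: the argument is short and one-directional (we never need the converse, that a cycle among the projectives comes from an oriented cycle of $Q_{A}$). The only points requiring a little care are that the equivalence $\mod A\simeq\ca$ be compatible with $\Hom$ and $\Ext^{1}$ — which is immediate from the $t$-structure generalities already recorded — and that the maps between projectives attached to arrows of $Q_{A}$ be genuine non-isomorphisms in the presence of a loop, which holds because their images land in the radical of the target. Everything else is routine bookkeeping.
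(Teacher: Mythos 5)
Your argument is correct and follows essentially the same route as the paper: regard $A$ as an object of $\der^{b}(\ch)$ via the equivalence with the heart, observe $\Hom_{\der^{b}(\ch)}(A,\Sigma A)=\Ext^{1}_{A}(A,A)=0$, and invoke Lemma~\ref{l:nocycle} after noting that an oriented cycle in $Q_{A}$ yields a cycle in $\add A$. The only difference is that you spell out the standard translation of arrows of $Q_{A}$ into nonzero non-isomorphisms between indecomposable projectives, which the paper leaves implicit.
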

\begin{proof}
Let $\ca$ be the heart of a bounded $t$-structure on $\der^{b}(\ch)$ which is equivalent to $\mod A$. We consider $A$ as an object in $\der^{b}(\ch)$ via the equivalence.  We clearly have
$\Hom_{\der^{b}(\ch)}(A,\Sigma A)=\Ext^{1}_{A}(A,A)=0$. Note that any cycle in the Gabriel quiver $Q_{A}$ induces an oriented cycle in $\add A$. Now the result follows from Lemma~\ref{l:nocycle}.
\end{proof}

For any finite-dimensional $k$-algebra $A$, set
\[\opname{\tau dv}(A):=\{\dimv M|M\in \mod A~\text{is indecomposable $\tau$-rigid}\}.
\]
The following result gives a lower bound of $c$-vector for algebras related to the hearts of bounded $t$-structures on $\der^{b}(\ch)$.

\begin{proposition}~\label{p:lowerbound}
Let $A$ be a finite-dimensional $k$-algebra such that $\mod A$ is equivalent to the heart of a bounded $t$-structure on $\der^{b}(\ch)$. Then we have $\opname{\tau dv}(A)\subseteq \opname{cv}^+(A)$.  \end{proposition}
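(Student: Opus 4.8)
\textit{Overall strategy.} Let $M$ be an indecomposable $\tau$-rigid $A$-module; I want to realize $\dimv M$ as a positive $c$-vector by producing, via Proposition~\ref{p:criterion}(1), a $2$-term silting object $T$ with $\Hom_A(T,\Sigma^iM)=k$ for $i=0$ and $0$ otherwise. The plan is to attach to $M$ the functorially finite torsion class $\opname{Fac}M$ and to check that $M$ is a simple object of the heart of the $t$-structure coming from the corresponding $2$-term silting object. The first observation I would record is that \emph{$M$ is a brick}, i.e. $\End_A(M)=k$: since $\tau$-rigid implies rigid we have $\Ext^1_A(M,M)=0$, so under the equivalence of $\mod A$ with the heart of a bounded $t$-structure on $\der^b(\ch)$ we get $\Hom_{\der^b(\ch)}(M,\Sigma M)=0$; as $\ch$ is hereditary, $M$ is the shift of an indecomposable object of $\ch$ without self-extension, which is exceptional by Lemma~\ref{l:Happel-Ringel}, whence $\End_A(M)=k$. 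This is the only point where the hypothesis on $A$ enters.

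\textit{Setup and the claim $M\in\ca$.} Put $\ct:=\opname{Fac}M$. By the Auslander--Smal\o\ result recalled in Section~\ref{S:recollection}, $\ct$ is a functorially finite torsion class and $M$ is $\Ext$-projective in it, hence $M$ is a direct summand of the support $\tau$-tilting module $N:=P(\ct)$. Let $(N,Q)$ be the support $\tau$-tilting pair, $T=[\,P_1^N\oplus Q\xrightarrow{(f,0)}P_0^N\,]$ (with $P_0^N$ in degree $0$) the associated $2$-term silting object from Theorem~\ref{t:bijection silting-tautilting}(2), and $\ca$ the heart of $(\der^{\leq 0}_T,\der^{\geq 0}_T)$, carrying the torsion pair $(\Sigma\cf_N,\ct_N)$ with $\ct_N=\ct$ by Proposition~\ref{p:torsion pair}. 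Write $N=\bigoplus_{j=1}^tN_j$ with $N_{j_0}\cong M$, and $Q=\bigoplus_{j\in J}P_j$. Computing $\Hom_{\der^b(\mod A)}(T,\Sigma^iM)$ summand by summand, it vanishes automatically for $i\neq 0,1$; in degree $1$ the $N_j$-summand contributes $\cok\Hom_A(f_j,M)$, which is $0$ because $N$ is $\tau$-rigid and $M,N_j$ are summands of $N$ (so $\Hom_A(M,\tau N_j)=0$, using the characterization of $\tau$-rigidity recalled in Section~\ref{S:recollection}), while the $P_j$-summand ($j\in J$) contributes a space of dimension $[M:S_j]$, which is $0$ since $M\in\opname{Fac}N$ and $[N:S_j]=\dim_k\Hom_A(P_j,N)=0$. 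Hence $M\in\ca$.

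\textit{One-dimensionality and conclusion.} In degree $0$ the $P_j$-summands contribute $\Hom_{\der^b}(\Sigma P_j,M)=0$, so $\Hom_A(T,M)\cong\bigoplus_{j=1}^t\Hom_A(N_j,M)=\Hom_A(N,M)$. Here $\Hom_A(N_{j_0},M)=\End_A(M)=k$, and for $j\neq j_0$ I claim $\Hom_A(N_j,M)=0$: any nonzero $g\colon N_j\to M$, precomposed with an epimorphism $M^a\twoheadrightarrow N_j$ (which exists as $N_j\in\opname{Fac}M$), becomes a map $M^a\to M$ given by a row of scalars since $M$ is a brick; this forces $g$ to be a split epimorphism, hence an isomorphism as $N_j$ is indecomposable, contradicting $N_j\not\cong M$. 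Therefore $\dim_k\Hom_A(T,M)=1$, so $\Hom_A(T,\Sigma^iM)=k$ for $i=0$ and $0$ otherwise; by Proposition~\ref{p:criterion}(1), $\dimv M\in\opname{cv}(A)$, and being the dimension vector of a module it lies in $\opname{cv}^+(A)$. As $M$ was arbitrary, $\opname{\tau dv}(A)\subseteq\opname{cv}^+(A)$.

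\textit{Main obstacle.} The technical heart is the middle step: one must use the precise two-term shape of $T$ together with the $\tau$-rigidity of the chosen completion $N$ and the support condition $\Hom_A(Q,N)=0$ to see that $M$ lies in the heart $\ca$, and then exploit the brick property of $M$ to get the $1$-dimensionality of $\Hom_A(T,M)$ — this last being precisely what identifies $M$ with a simple object of $\ca$ belonging to $\ct_N$, and hence with a positive $c$-vector. The brick property (Step 1) is where the assumption that $\mod A$ is the heart of a bounded $t$-structure on $\der^b(\ch)$ is really needed; everything else is general $\tau$-tilting theory.
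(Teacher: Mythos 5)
Your proof is correct, and its skeleton is the same as the paper's: complete the indecomposable $\tau$-rigid module $M$ to the support $\tau$-tilting module $N=P(\opname{Fac}M)$, pass to the associated $2$-term silting object $T$, verify $\Hom_A(T,\Sigma^iM)=k$ for $i=0$ and $0$ otherwise, and invoke Proposition~\ref{p:criterion}. Where you genuinely diverge is in the justification of the two crucial facts $\End_A(M)=k$ and $\Hom_A(N_j,M)=0$ for the other summands $N_j$ of $N$. The paper gets both at once from Lemma~\ref{l:nocycle}: since $\Hom_{\der^{b}(\ch)}(N,\Sigma N)=0$, the category $\add N$ has no cycles, and as $\Hom_A(M,N_j)\neq 0$ (because $N_j\in\opname{Fac}M$) any nonzero map $N_j\to M$ or nonscalar endomorphism of $M$ would close a cycle. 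You instead use the hypothesis on $A$ only once, to show $M$ is a brick (shift of an exceptional object of $\ch$, via Lemma~\ref{l:Happel-Ringel}), and then kill $\Hom_A(N_j,M)$ by a purely module-theoretic argument: factor a nonzero $g\colon N_j\to M$ through an epimorphism $M^a\twoheadrightarrow N_j$ and use $\End_A(M)=k$ to force $g$ to be a split epimorphism, hence an isomorphism, contradicting basicness of $N$. Your route is slightly more elementary at this step and makes transparent that the only input from the derived-equivalence hypothesis is the brick property of indecomposable $\tau$-rigid modules (which indeed fails for general $A$, e.g. $A=k[x]/(x^2)$, where the proposition itself fails); the paper's no-cycle lemma is a stronger tool, but it is the one reused later (via Corollary~\ref{c:nocycle}) in the proof of Proposition~\ref{t:upperbound}, so the paper's choice streamlines the section as a whole. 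Your explicit degree-by-degree computation of $\Hom_A(T,\Sigma^iM)$ (using $\Hom_A(M,\tau N_j)=0$ and $\Hom_A(Q,N)=0$) is a correct unpacking of what the paper calls a direct calculation.
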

\begin{proof}
We need to prove that for any indecomposable $\tau$-rigid $A$-module $M$,  $\dimv M\in \opname{cv}^+(A)$.  Since $M$ is $\tau$-rigid, the subcategory $\opname{Fac}M$ is a functorially finite torsion class. Let $P=P(\opname{Fac}M)$ be the direct sum of one copy of each of indecomposable $\Ext$-projective objects in $\opname{Fac}M$. We may write $P=M\oplus M_1\oplus \cdots \oplus M_r$. Note that $P$ is a support $\tau$-tilting $A$-module, hence $\Hom_{\der^b(\ch)}(P,\Sigma P)=\Ext^1_A(P,P)=0$. On the other hand, by the definition of $\opname{Fac}M$, we deduce that $\Hom_{\der^{b}(\ch)}(M,M_i)=\Hom_A(M,M_i)\neq 0$ for any $i$. Now Lemma ~\ref{l:nocycle} implies that $\Hom_\der(M_i,M)=0$ for all $i$. Let $T$ be the $2$-term silting object in $\der^b(\mod A)$ corresponding to $P$.  A direct calculation shows  that $\Hom_{\der^{b}(\ch)}(T,M)=k$ and $\Hom_{\der^{b}(\ch)}(T,\Sigma^{i}M)=0$ for all $i\neq 0$.
 Hence $\dimv M$ is a positive $c$-vector of $A$ by Proposition~\ref{p:criterion}. In particular, we have proved that $\opname{\tau dv}(A)\subseteq \opname{cv}^+(A)$.
\end{proof}

\subsubsection{Piecewise hereditary algebras}
Recall that $\mathcal{H}$ is a hereditary abelian category with finite-dimensional morphism and extension spaces, $\der^{b}(\mathcal{H})$ is the bounded derived category of $\mathcal{H}$.
An object $T\in \der^{b}(\mathcal{H})$ is a {\it tilting complex} if
\begin{itemize}
\item[(1)] $\Hom_{\der^{b}(\ch)}(T,\Sigma^{n}T)=0$ for all $0\neq n\in \Z$;
\item[(2)] for each $X\in \der^{b}(\ch)$, the condition $\Hom_{\der^{b}(\ch)}(T, \Sigma^{n}X)=0$ for all $n\in \Z$ implies that $X=0$ in $\der^{b}(\ch)$.
\end{itemize}
A tilting complex  $T$ of $\der^{b}(\ch)$ is called a {\it tilting object} of $\mathcal{H}$ if $T\in \mathcal{H}$.  A finite-dimensional $k$-algebra $A$ is called {\it piecewise hereditary} if $A$ is isomorphic to the endomorphism algebra of a tilting complex $T$ in $\der^{b}(\ch)$. It is called {\it quasitilted} if moreover $T$ is a tilting object in $\mathcal{H}$.
A tilting complex $T$ induces an equivalence of triangulated categories $?\lten_{\End_{\der^{b}(\ch)}(T)}T:\der^{b}(\mod \End_{\der^{b}(\ch)}(T))\to \der^{b}(\mathcal{H})$.
By Happel's theorem~\cite{H01},  if  $\mathcal{H}$ is a connected hereditary abelian $k$-category with finite-dimensional morphism and extension spaces which  admits a tilting complex, then $\mathcal{H} $ is derived equivalent to the category $\mod H$ for certain finite-dimensional hereditary $k$-algebra or to the category $\opname{coh}\mathbb{X}$ of coherent sheaves over a weighted projective line~\cite{GL87}.
Note that when $\mathcal{H}$ is the category $\mod H$ for some finite-dimensional hereditary $k$-algebra,  the endomorphism algebra of a tilting module in $\mod H$ is called a {\it tilted algebra}~\cite{B81,HR82}. In particular, tilted algebras are quasitilted.

Let $A$ be a finite-dimensional $k$-algebra which is derived equivalent to $\mathcal{H}$. The algebra $A$ turns out to be piecewise hereditary. Indeed, let $K:\der^{b}(\mod A)\to \der^{b}(\ch)$ be the triangle equivalent functor. It is clear that $K(A)$ is a tilting complex of $\der^{b}(\ch)$ and $A\cong \End_{\der^{b}(\ch)}(K(A))$.
For a finite-dimensional $k$-algebra $A$,  a module $M$ is called {\it exceptional} provided  $\dim_{k}\Hom_{A}(M,M)=1$ and $\Ext^{1}_{A}(M,M)=0$.
We define
\[\opname{exdv}(A)=\{\dimv M| M\in \mod A ~\text{is exceptional}\}.\]
Our next result gives a upper bounded of positive $c$-vectors by exceptional modules for piecewise hereditary algebras.
\begin{proposition}~\label{t:upperbound}
Let $A$  be a piecewise hereditary $k$-algebra. Then we have \[\opname{\tau dv}(A)\subseteq \opname{cv}^+(A)\subseteq \opname{exdv}(A).\]
\end{proposition}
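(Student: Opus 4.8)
The plan is to obtain the left-hand inclusion for free from Proposition~\ref{p:lowerbound}, and to prove the right-hand inclusion by pushing a positive $c$-vector into $\der^b(\ch)$ and recognising it as a simple object of an intermediate heart whose endomorphism algebra has a loop-free Gabriel quiver.

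For $\opname{\tau dv}(A)\subseteq\opname{cv}^+(A)$: since $A$ is piecewise hereditary there is a triangle equivalence $K\colon\der^b(\mod A)\to\der^b(\ch)$, and $K$ carries the standard $t$-structure of $\der^b(\mod A)$ (which is bounded) to a bounded $t$-structure on $\der^b(\ch)$ with heart equivalent to $\mod A$. Thus $\mod A$ is equivalent to the heart of a bounded $t$-structure on $\der^b(\ch)$ and Proposition~\ref{p:lowerbound} applies verbatim. Note also that $A$ has finite global dimension, so $\per A=\der^b(\mod A)$ and any $2$-term silting object $T\in\per A$ becomes an object of $\der^b(\ch)$ under $K$.

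For $\opname{cv}^+(A)\subseteq\opname{exdv}(A)$: let $c$ be a positive $c$-vector. By Proposition~\ref{p:criterion} there is a $2$-term silting object $T\in\per A$ and an indecomposable $A$-module $M$ with $c=\dimv M$ and $\Hom_A(T,\Sigma^iM)=k$ for $i=0$, $0$ otherwise; as in the proof of Theorem~\ref{t:sign-coherence} this says exactly that $M\cong F(S_i^{\Gamma})$ for some $i$, i.e.\ $M$ is a simple object of the heart $\ca=\der_T^{\leq 0}\cap\der_T^{\geq 0}$, which by Section~\ref{s:negative dg algebra} is equivalent to $\mod B$ for the finite-dimensional algebra $B=\End_A(T)$. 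Since $T$ is silting, $\Hom_{\per A}(T,\Sigma T)=0$, hence $\Hom_{\der^b(\ch)}(K(T),\Sigma K(T))=0$, so by Lemma~\ref{l:nocycle} the category $\add K(T)$ has no cycle. As $B$ is the endomorphism algebra of $K(T)$, every cycle of $Q_B$ would produce a cycle in $\add K(T)$ (exactly as in the proof of Corollary~\ref{c:nocycle}), so $Q_B$ has no cycle; in particular there is no loop at the vertex of the simple $B$-module $S$ corresponding to $M$, whence $\End_B(S)=k$ and $\Ext^1_B(S,S)=0$. Transporting back along $\ca\cong\mod B$ and using $M\in\mod A\cap\ca$ gives $\dim_k\Hom_A(M,M)=\dim_k\Hom_\ca(M,M)=1$ and $\Ext^1_A(M,M)=\Ext^1_\ca(M,M)=0$, so $M$ is exceptional and $c\in\opname{exdv}(A)$. (Equivalently one may observe that $K(M)$ is a shift of an indecomposable $E\in\ch$ and that the same computation makes $E$ exceptional in $\ch$ in the sense of Lemma~\ref{l:Happel-Ringel}.)

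The only points that need care are standard facts already recorded in the paper: that the heart of the intermediate $t$-structure attached to a $2$-term silting object is equivalent to $\mod\End_A(T)$, and that $M\cong F(S_i^{\Gamma})$ is a simple object there (Section~\ref{s:negative dg algebra}, following~\cite{KY12,BY13}). Granting these, there is no real obstacle: the whole statement collapses to the absence of loops in the Gabriel quiver of $\End_A(T)$, which Lemma~\ref{l:nocycle} supplies once $T$ is transported into $\der^b(\ch)$ via the piecewise-hereditary equivalence.
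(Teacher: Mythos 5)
Your proof is correct and follows essentially the same route as the paper: Proposition~\ref{p:lowerbound} gives the first inclusion, and for the second you use Proposition~\ref{p:criterion} to identify $M$ with a simple object of the heart $\ca\cong\mod\End_A(T)$ and then kill self-extensions via the no-cycle Lemma~\ref{l:nocycle}. The only cosmetic difference is that you apply Lemma~\ref{l:nocycle} directly to $K(T)$ using $\Hom_{\der^{b}(\ch)}(K(T),\Sigma K(T))=0$, whereas the paper invokes Corollary~\ref{c:nocycle} for $\End_A(T)$ via the heart being a heart of a bounded $t$-structure on $\der^{b}(\ch)$; both arguments produce the loop-free Gabriel quiver and hence the exceptionality of $M$.
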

\begin{proof}

The first inclusion $\opname{\tau dv}(A)\subseteq \opname{cv}^{+}(A)$ follows from Proposition~\ref{p:lowerbound} directly.

Let $c$ be a positive $c$-vector,  by Proposition~\ref{p:criterion}, there is a $2$-term silting object $T\in \per A$ and an indecomposable $A$-module $M$ with $\dimv M=c$ such that $\Hom_A(T,M)=k$ and $\Hom_{A}(T,\Sigma^{i}M)=0$ for all $i\neq 0$. Let $\Gamma$ be the negative dg algebra associated to $T$ and $F:=\lten_{\Gamma}T:\der_{fd}(\Gamma)\to \der^b(\mod A)$ the triangle equivalent functor. The condition $\Hom_A(T,M)=k$ and $\Hom_{A}(T,\Sigma^{i}M)=0$ for all $i\neq 0$ implies  that there is a simple dg $\Gamma$-module $S$ such that $M\cong F(S)$. Moreover, the simple dg $\Gamma$-module $S$ is a simple object in the heart $\ca$ of the standard bounded $t$-structure $(\der^{\leq 0}, \der^{\geq 0})$ on $\der_{fd}(\Gamma)$.
 Let $\ch$ be the hereditary abelian category  such that $\der^{b}(\mod A)\cong \der^{b}(\ch)$, then we have $\der_{fd}(\Gamma)\cong \der^{b}(\ch)$. Therefore $\ca$ is equivalent to the heart of a bounded $t$-structure on $\der^{b}(\ch)$.
 On the other hand, we have $\ca\cong \mod \End_{A}(T)$. Hence $\Hom_{A}(M,M)=\Hom_{\Gamma}(S,S)=\Hom_{\ca}(S,S)=k$. Moreover,
by Corollary~\ref{c:nocycle}, we deduce that $0=\Hom_{\ca}(S,\Sigma S)=\Hom_{\Gamma}(S, \Sigma S)=\Ext_A^1(M,M)$. In particular, $c\in \opname{exdv}(A)$. This completes the proof.
\end{proof}

If $A$ is a finite-dimensional hereditary algebra over $k$, then we clearly have $\opname{\tau dv}(A)=\opname{exdv}(A)$. Hence we obtain the equalities for acyclic cluster algebras established by N\'{a}jera Ch\'{a}vez in ~\cite{Chavez13}.
\begin{corollary}
Let $A$ be a finite-dimensional hereditary algebra over $k$.  We have $\opname{cv}^{+}(A)=\opname{exdv}(A)$.
\end{corollary}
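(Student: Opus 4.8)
The plan is to obtain the equality by squeezing $\opname{cv}^{+}(A)$ between $\opname{\tau dv}(A)$ and $\opname{exdv}(A)$ and then checking that these two outer sets coincide when $A$ is hereditary. First I would note that a finite-dimensional hereditary algebra $A$ is piecewise hereditary in the sense of the previous subsection: taking $\ch=\mod A$, the object $A\in\der^{b}(\ch)$ is a tilting complex (indeed a tilting object) with $A\cong\End_{\der^{b}(\ch)}(A)$. Hence Proposition~\ref{t:upperbound} applies verbatim and yields the chain $\opname{\tau dv}(A)\subseteq\opname{cv}^{+}(A)\subseteq\opname{exdv}(A)$. It therefore remains only to prove the equality $\opname{\tau dv}(A)=\opname{exdv}(A)$.

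For this the key observation is that over a hereditary algebra the notions of $\tau$-rigid and rigid coincide. Indeed, if $0\to P_{1}^{M}\xrightarrow{f}P_{0}^{M}\to M\to 0$ is a minimal projective resolution of $M$ (exact on the left since $A$ is hereditary, so that $P_{1}^{M}$ is a submodule of a projective, hence projective), then applying $\Hom_{A}(-,M)$ and using $\Ext^{1}_{A}(P_{0}^{M},M)=0$ shows that the cokernel of $\Hom_{A}(f,M)$ is $\Ext^{1}_{A}(M,M)$; by the criterion recalled in Section~\ref{S:recollection}, $M$ is $\tau$-rigid precisely when $\Hom_{A}(f,M)$ is surjective, i.e. precisely when $\Ext^{1}_{A}(M,M)=0$. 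Consequently, if $M$ is indecomposable and $\tau$-rigid, then $\Ext^{1}_{A}(M,M)=0$, and Lemma~\ref{l:Happel-Ringel} applied with $\mathcal{H}=\mod A$ shows that an indecomposable module without self-extensions is exceptional, so $\dim_{k}\End_{A}(M)=1$ and $\dimv M\in\opname{exdv}(A)$. Conversely, if $M$ is exceptional then $\End_{A}(M)=k$ is local, hence $M$ is indecomposable, and $\Ext^{1}_{A}(M,M)=0$ makes $M$ rigid and therefore $\tau$-rigid, so $\dimv M\in\opname{\tau dv}(A)$. Thus $\opname{\tau dv}(A)=\opname{exdv}(A)$.

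Combining this with the chain $\opname{\tau dv}(A)\subseteq\opname{cv}^{+}(A)\subseteq\opname{exdv}(A)$ forces $\opname{cv}^{+}(A)=\opname{exdv}(A)$, as desired; if wanted, one also records $\opname{cv}(A)=-\opname{cv}^{+}(A)\cup\opname{cv}^{+}(A)$ via Proposition~\ref{t:equality}. I do not expect a serious obstacle here: the argument is a sandwich on top of Proposition~\ref{t:upperbound}, and the only points deserving a line of care are the (immediate) verification that a hereditary algebra fits the piecewise hereditary framework with $\ch=\mod A$, and the identification $\opname{\tau dv}(A)=\opname{exdv}(A)$, which rests on the equivalence of $\tau$-rigidity and rigidity in the hereditary case together with the fact that, over the algebraically closed field $k$, an indecomposable rigid module has endomorphism ring exactly $k$ by Lemma~\ref{l:Happel-Ringel}.
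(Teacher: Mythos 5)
Your proposal is correct and follows the paper's own route: the paper likewise deduces the corollary from the sandwich $\opname{\tau dv}(A)\subseteq\opname{cv}^{+}(A)\subseteq\opname{exdv}(A)$ of Proposition~\ref{t:upperbound}, simply asserting that $\opname{\tau dv}(A)=\opname{exdv}(A)$ ``clearly'' holds for hereditary $A$. Your extra justification of that equality (rigid $=$ $\tau$-rigid via the projective resolution criterion, plus Lemma~\ref{l:Happel-Ringel} to get $\End_A(M)=k$) is accurate and merely fills in what the paper leaves implicit.
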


\subsubsection{Quasitilted algebras}
Let $A$ be a quasitilted algebra. By definition, there is a hereditary abelian $k$-category $\mathcal{H}$ with a tilting object $T\in \mathcal{H}$ such that $A\cong \End_{\mathcal{H}}(T)$. In this case, the category $\mod A$ of finitely generated right $A$-modules has a nice interpretation via torsion theory of $\mathcal{H}$.

Let $\ct$ (resp. $\cf$) be the full subcategory of $\mathcal{H}$ consisting of all objects $X$ (resp. $Y$) of $\mathcal{H}$ satisfying $\Ext_{\mathcal{H}}^{1}(T,X)=0$ (resp. $\Hom_{\mathcal{H}}(T,Y)=0$). Let $F:\der^{b}(\mod A)\to \der^{b}(\mathcal{H})$ be the triangle equivalent functor. Then the standard $t$-structure $(\der^{\leq 0}, \der^{\geq 0})$ on $\der^{b}(\mod A)$ induces a $t$-structure $(\der^{\leq0}_{T}, \der^{\geq 0}_{T})$ via the functor $F$ ({\it cf.} Section 2.4). Moreover, $\mod A$ is equivalent to the heart $\ca$ of $(\der^{\leq0}_{T}, \der^{\geq 0}_{T})$  via the functor $F$.  The following results are well-known, see ~\cite{B81,HR82,Lenzing07}.
\begin{lemma}~\label{l:projective-injective}
\begin{itemize}
\item[(1)]  $(\ct,\cf)$ is a torsion pair over $\mathcal{H}$;
\item[(2)]  $(\Sigma \cf, \ct)$ is a splitting torsion pair over $\ca$;
\item[(3)] Under the identification of $\mod A$ with $\ca$, we have $\opname{pd} M_{A}\leq 1$ for $M\in \ct$ and $\opname{id} N_{A}\leq 1$ for $N\in \Sigma \cf$.
\end{itemize}
\end{lemma}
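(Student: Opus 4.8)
The plan is to deduce all three parts from two structural facts about the bounded derived category $\der^b(\ch)$. First, since $\ch$ has global dimension at most $1$, every object $X\in\der^b(\ch)$ is isomorphic to the direct sum $\bigoplus_i\Sigma^{-i}H^i(X)$ of the shifts of its cohomology objects, the Postnikov connecting morphisms all lying in groups $\Ext^{\geq 2}_\ch=0$. Second, arguing exactly as in the proof of Proposition~\ref{p:torsion pair} — apply $\Hom_{\der^b(\ch)}(T,?)$ to the truncation triangles of a given $Z$ and use $\opname{thick}(T)=\der^b(\ch)$ — every object $Z$ of the heart $\ca$ has cohomology concentrated in degrees $0$ and $-1$. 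Combining the two, every $Z\in\ca$ satisfies $Z\cong H^0(Z)\oplus\Sigma H^{-1}(Z)$ in $\der^b(\ch)$; expanding $\Hom_{\der^b(\ch)}(T,\Sigma^i Z)$ along this decomposition and using heredity one reads off that $H^0(Z)\in\ct$ and $H^{-1}(Z)\in\cf$. These observations will carry most of the argument.

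For (1), I would prove $\ct=\opname{Fac} T$. The inclusion $\opname{Fac} T\subseteq\ct$ is immediate: $\Ext^1_\ch(T,?)$ is right exact since $\Ext^2_\ch=0$, so an epimorphism $T^m\twoheadrightarrow X$ together with $\Ext^1_\ch(T,T)=0$ forces $\Ext^1_\ch(T,X)=0$. Conversely, given $X\in\ct$, let $\opname{tr}_T X\subseteq X$ be the trace of $T$ in $X$; it lies in $\opname{Fac} T\subseteq\ct$, so $\Ext^1_\ch(T,\opname{tr}_T X)=0$, and applying $\Hom_\ch(T,?)$ and $\Ext^1_\ch(T,?)$ to $0\to\opname{tr}_T X\to X\to X/\opname{tr}_T X\to 0$ — using that every morphism $T\to X$ factors through $\opname{tr}_T X$ — yields $X/\opname{tr}_T X\in\ct\cap\cf$. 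But $\ct\cap\cf=0$: an object $Z$ in the intersection has $\Hom_{\der^b(\ch)}(T,\Sigma^i Z)=0$ for all $i$ (for $i\neq 0,1$ by heredity, for $i=0,1$ by the definitions of $\cf$ and $\ct$), hence $Z=0$ by the second defining property of the tilting complex $T$. Thus $X=\opname{tr}_T X\in\opname{Fac} T$. It follows that $\Hom_\ch(\ct,\cf)=0$ — precompose any map $X\to Y$ with $X\in\ct=\opname{Fac} T$ by an epimorphism from a power of $T$, on which $\Hom_\ch(T,\cf)=0$ — that $\ct$ is closed under quotients and extensions by right-exactness of $\Ext^1_\ch(T,?)$, and that the torsion sequence of $X\in\ch$ is $0\to\opname{tr}_T X\to X\to X/\opname{tr}_T X\to 0$; so $(\ct,\cf)$ is a torsion pair of $\ch$.

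For (2), from $\Hom_{\der^b(\ch)}(T,\Sigma^i X)=\Ext^i_\ch(T,X)$ for $X\in\ch$ one gets $\ct=\ch\cap\der^{\leq 0}_T\subseteq\ca$ and, likewise, $\Sigma\cf=\Sigma\ch\cap\der^{\geq 0}_T\subseteq\ca$, the complementary $t$-structure conditions being automatic by heredity; and $\Hom_\ca(\Sigma F,X)=\Hom_{\der^b(\ch)}(\Sigma F,X)=\Ext^{-1}_\ch(F,X)=0$ for $F\in\cf$, $X\in\ct$. The decomposition $Z\cong H^0(Z)\oplus\Sigma H^{-1}(Z)$ of the first paragraph then provides, for each $Z\in\ca$, a short exact sequence $0\to\Sigma H^{-1}(Z)\to Z\to H^0(Z)\to 0$ in $\ca$ with torsion part $\Sigma H^{-1}(Z)\in\Sigma\cf$ and torsion-free part $H^0(Z)\in\ct$; together with $\Hom_\ca(\Sigma\cf,\ct)=0$ and the evident closure properties this shows $(\Sigma\cf,\ct)$ is a torsion pair of $\ca$, and it is splitting because every such sequence is split. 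For (3), identify $\mod A$ with $\ca$ via the equivalence $F$. For $M\in\ct$ and arbitrary $N\in\ca$, writing $N\cong H^0(N)\oplus\Sigma H^{-1}(N)$ gives $\Ext^2_A(M,N)\cong\Hom_{\der^b(\ch)}(M,\Sigma^2 N)\cong\Ext^2_\ch(M,H^0(N))\oplus\Ext^3_\ch(M,H^{-1}(N))=0$, so $\opname{pd} M_A\leq 1$; dually, for $N=\Sigma N_0$ with $N_0\in\cf$ and arbitrary $M\in\ca$, decomposing $M$ gives $\Ext^2_A(M,N)\cong\Ext^3_\ch(H^0(M),N_0)\oplus\Ext^2_\ch(H^{-1}(M),N_0)=0$, so $\opname{id} N_A\leq 1$.

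The main obstacle is not really in the three parts themselves — once the setup is arranged they are homological bookkeeping with the vanishing $\Ext^{\geq 2}_\ch=0$ — but in the two structural inputs used throughout: the splitting of objects of $\der^b(\ch)$ into shifts of their cohomology, and especially the two-degree concentration of the objects of the heart $\ca$, which is the argument already run in Proposition~\ref{p:torsion pair}. A secondary point that needs genuine care is the identification $\ct=\opname{Fac} T$ in part (1), the one place where the generation property of the tilting object $T$ really enters; everything else is routine, which is why the statement may fairly be cited as well known.
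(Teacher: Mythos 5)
Your proof is correct, and it is worth noting that the paper does not prove this lemma at all: it is quoted as well known with references to Bongartz, Happel--Ringel and Lenzing, where it is part of classical tilting theory (the Brenner--Butler-type torsion pair $\ct=\opname{Gen}T=\{X\mid\Ext^1_{\ch}(T,X)=0\}$ and the splitting of the induced pair in the tilted heart). Your argument reconstructs exactly that classical route in a self-contained way: the identification $\ct=\opname{Fac}T$ via the trace, Hom-orthogonality and the torsion sequence $0\to\opname{tr}_TX\to X\to X/\opname{tr}_TX\to 0$, the splitting of $(\Sigma\cf,\ct)$ from $\Ext^2_{\ch}=0$, and part (3) by computing $\Ext^2_A$ as a derived-category Hom and decomposing the test object into $H^0\oplus\Sigma H^{-1}$. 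The two structural inputs you lean on (the decomposition of objects of $\der^b(\ch)$ into shifted cohomologies, and the concentration of heart objects in degrees $0,-1$) are correct and suffice. One small caveat: you should not invoke $\opname{thick}(T)=\der^b(\ch)$ here, since that equality amounts to $A$ having finite global dimension, which is essentially what part (3) is in the business of establishing; but this is harmless, because the only use you make of it --- concluding $W=0$ from $\Hom_{\der^b(\ch)}(T,\Sigma^iW)=0$ for all $i$ --- is literally condition (2) in the definition of a tilting complex, so replacing the thick-subcategory appeal by that condition (together with heredity, which kills $\Ext^i_{\ch}$ for $i\neq 0,1$) makes the argument airtight. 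With that substitution your proof is a complete and correct justification of the lemma, somewhat more explicit than the paper's citation of the literature.
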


\begin{theorem}
Let $A$ be a quasitilted algebra over $k$, then we have $\opname{cv}^{+}(A)=\opname{exdv}(A)$.
\end{theorem}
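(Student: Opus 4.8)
The plan is to prove the two inclusions separately, the first being essentially free. Since a quasitilted algebra is by definition the endomorphism algebra of a tilting object $T\in\mathcal{H}$, and a tilting object is in particular a tilting complex, $A$ is piecewise hereditary; hence Proposition~\ref{t:upperbound} already gives $\opname{\tau dv}(A)\subseteq\opname{cv}^{+}(A)\subseteq\opname{exdv}(A)$. Thus the whole content lies in the reverse inclusion $\opname{exdv}(A)\subseteq\opname{cv}^{+}(A)$. My strategy for this is to take an arbitrary indecomposable exceptional $A$-module $M$ and split into cases according to the \emph{splitting} torsion pair $(\Sigma\cf,\ct)$ of $\ca\cong\mod A$ provided by Lemma~\ref{l:projective-injective}(2): because the torsion pair is splitting, the indecomposable $M$ lies entirely in $\ct$ or entirely in $\Sigma\cf$, so by Lemma~\ref{l:projective-injective}(3) either $\opname{pd}M_{A}\le 1$ or $\opname{id}M_{A}\le 1$.

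In the case $\opname{pd}M_{A}\le 1$, I would first upgrade "rigid" to "$\tau$-rigid". Writing a minimal projective resolution $0\to P_{1}\xrightarrow{f}P_{0}\to M\to 0$ and applying $\Hom_{A}(-,M)$ yields an exact sequence $0\to\Hom_{A}(M,M)\to\Hom_{A}(P_{0},M)\xrightarrow{\Hom_{A}(f,M)}\Hom_{A}(P_{1},M)\to\Ext^{1}_{A}(M,M)\to 0$, using $\Ext^{1}_{A}(P_{0},M)=0$. Since $M$ is exceptional, $\Ext^{1}_{A}(M,M)=0$, so $\Hom_{A}(f,M)$ is surjective, which by the criterion recalled in Section~\ref{S:recollection} means $M$ is $\tau$-rigid. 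Therefore $\dimv M\in\opname{\tau dv}(A)$, and since $\mod A$ is the heart of a bounded $t$-structure on $\der^{b}(\mathcal{H})$, Proposition~\ref{p:lowerbound} gives $\dimv M\in\opname{cv}^{+}(A)$.

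In the case $\opname{id}M_{A}\le 1$ I would pass to the opposite algebra, exploiting Proposition~\ref{p:equality opposite}. The duality $D=\Hom_{k}(-,k)\colon\mod A\to\mod A^{\opname{op}}$ sends $M$ to an indecomposable module $DM$ with $\Hom_{A^{\opname{op}}}(DM,DM)=\Hom_{A}(M,M)=k$ and $\Ext^{1}_{A^{\opname{op}}}(DM,DM)=\Ext^{1}_{A}(M,M)=0$, so $DM$ is exceptional, and $\opname{pd}(DM)_{A^{\opname{op}}}=\opname{id}M_{A}\le 1$. Moreover $A^{\opname{op}}$ is again piecewise hereditary (it is derived equivalent to $\mathcal{H}^{\opname{op}}$, which is hereditary abelian with finite-dimensional morphism and extension spaces), so the argument of the previous paragraph applied to $A^{\opname{op}}$ shows $DM$ is $\tau$-rigid over $A^{\opname{op}}$ and $\dimv DM\in\opname{cv}^{+}(A^{\opname{op}})$. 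Since $D$ preserves composition multiplicities, $\dimv DM=\dimv M$ under the identification of the simple $A^{\opname{op}}$-modules with the $D$-duals of the simple $A$-modules, and by Proposition~\ref{p:equality opposite} we have $\opname{cv}^{+}(A^{\opname{op}})=\opname{cv}^{+}(A)$; hence $\dimv M\in\opname{cv}^{+}(A)$. Combining the two cases gives $\opname{exdv}(A)\subseteq\opname{cv}^{+}(A)$, completing the proof.

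The step I expect to carry the weight is the dichotomy "$\opname{pd}M_{A}\le 1$ or $\opname{id}M_{A}\le 1$" for an arbitrary indecomposable exceptional module: this is exactly where quasitiltedness is genuinely used, via the splitting of $(\Sigma\cf,\ct)$, and without it an exceptional module could mix components from $\ct$ and $\Sigma\cf$ and neither homological bound would be available. The secondary delicate point is that Proposition~\ref{p:lowerbound} only manufactures $c$-vectors out of $\tau$-rigid — not $\tau^{-1}$-rigid — modules, so the injective-dimension half of the argument is not symmetric and must be routed through $A^{\opname{op}}$; one should take care there that "exceptional", "dimension vector", and "positive $c$-vector" all transport correctly along $D$, which is precisely what Proposition~\ref{p:equality opposite} guarantees.
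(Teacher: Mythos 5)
Your proposal is correct and follows essentially the same route as the paper: the inclusion $\opname{cv}^{+}(A)\subseteq\opname{exdv}(A)$ via Proposition~\ref{t:upperbound}, the dichotomy from the splitting torsion pair of Lemma~\ref{l:projective-injective}, Proposition~\ref{p:lowerbound} in the case $\opname{pd}M\le 1$, and the passage through $A^{\opname{op}}$ and Proposition~\ref{p:equality opposite} in the case $\opname{id}M\le 1$. The only difference is that you spell out the standard argument that an exceptional module of projective dimension at most one is $\tau$-rigid, which the paper leaves implicit.
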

\begin{proof}
We need to show that  for each exceptional $A$-module $M$, the dimension vector $\dimv M$ is a positive $c$-vector of $A$.
We identify $\mod A$ with $\ca$ as above.  Let $M$ be an exceptional $A$-module. Since $(\Sigma \cf, \ct)$ is splitting, $M$ lies either in $\ct$ or in $\Sigma \cf$.
 If $M\in \ct$, then by Lemma~\ref{l:projective-injective} we have $\opname{pd} M\leq 1$.  As a consequence,  $M$ is an indecomposable $\tau$-rigid $A$-module. By Proposition~\ref{p:lowerbound}, we deduce that $\dimv M$ is a positive $c$-vector of $A$.

 Now suppose that $M\in \Sigma \cf$, then $\opname{id} M\leq 1$.  Recall that we have the usual duality $D=\Hom_{k}(?,k):\mod A\to \mod A^{\opname{op}}$. It is clear that $D(M)\in \mod A^{\opname{op}}$  is an exceptional $A^{\opname{op}}$-module  and has projective dimension at most one. Therefore $D(M)$ is an indecomposable $\tau$-rigid $A^{\opname{op}}$-module. On the other hand, we clearly have $\der^{b}(\mod A^{\opname{op}})\cong \der^{b}(\mathcal{H}^{{\opname{op}}})$, where $\mathcal{H}^{{\opname{op}}}$ is the opposite category of $\mathcal{H}$, which is a hereditary abelian category. By Proposition~\ref{p:lowerbound} again, we deduce that $\dimv D(M)$ is a positive $c$-vector of $A^{\opname{op}}$. Note that $\dimv M=\dimv D(M)$ and $\opname{cv}^{+}(A)=\opname{cv}^{+}(A^{\opname{op}})$, we have $\dimv M\in \opname{cv}^{+}(A)$. Now the result follows from  Proposition~\ref{t:upperbound}.
\end{proof}
\begin{remark}
Let $A$ be a finite dimensional algebra of finite global dimension. One can construct several Lie algebras related to $A$({\it cf.}~\cite{FuPeng14}). Namely, the Ringel-Hall Lie algebra $\mathfrak{CLRH}(A)$ in the sense of Peng-Xiao~\cite{PengXiao00}, the Borcherds type Lie algebra $\mathfrak{BL}(A)$ associated to the symmetric bilinear form of $A$~\cite{FuPeng14} and the intersection matrix Lie algebra $im(A)$ in the sense of Slodowy~\cite{Slodowy86}. All of these Lie algebras are graded by the Grothendieck group $\go(\mod A)$ of $A$. When $A$ is a tilted algebra, all of the three Lie algebras are isomorphic to a Kac-Moody Lie algebra $\mathfrak{g}(A)$. Let $\phi$ be the induced isomorphism between $\go(\mod A)$ and the root lattice of $\mathfrak{g}(A)$. We call a root $\alpha\in \go(\mod A)$ of $im(A)$ is a {\it real Schur root} of $A$ if the image $\phi(\alpha)$ is a real Schur root of $\mathfrak{g}(A)$. Then the above theorem implies that for a tilted algebra $A$, its positive $c$-vectors coincide with the positive real Schur roots of $A$
\end{remark}

\subsection{$c$-vectors of representation-directed algebras}~\label{S:c-vectors of rep-directed}
A finite-dimensional  $k$-algebra  is called {\it representation-directed} if there is no cycle in $\mod A$.
Let $A$ be a finite-dimensional  representation-directed $k$-algebra. By the definition, we know that every indecomposable $A$-module is $\tau$-rigid and also exceptional.
We have the following equality.
\begin{proposition}
Let $A$ be a representation-directed algebra over  $k$. We have $\opname{cv}^+(A)=\opname{exdv}(A)$.
\end{proposition}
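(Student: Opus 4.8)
The plan is to show the two inclusions $\opname{cv}^+(A)\subseteq\opname{exdv}(A)$ and $\opname{exdv}(A)\subseteq\opname{cv}^+(A)$ separately, using the fact that a representation-directed algebra has every indecomposable module both $\tau$-rigid and exceptional. The first inclusion is already known in general: by Proposition~\ref{p:criterion} any positive $c$-vector is the dimension vector of an indecomposable $A$-module $M$, and since $A$ is representation-directed this $M$ is automatically exceptional, so $\opname{cv}^+(A)\subseteq\opname{dv}(A)=\opname{exdv}(A)$. (In fact $\opname{dv}(A)=\opname{exdv}(A)=\opname{\tau dv}(A)$ here.) So the content is the reverse inclusion.

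For the reverse inclusion, let $M$ be an arbitrary indecomposable $A$-module; it is $\tau$-rigid, so $\opname{Fac}M$ is a functorially finite torsion class by the proposition attributed to~\cite{AS81}. Following the argument of Proposition~\ref{p:lowerbound}, I would set $P=P(\opname{Fac}M)=M\oplus M_1\oplus\cdots\oplus M_r$, the sum of the indecomposable $\Ext$-projectives in $\opname{Fac}M$. Since $P$ is support $\tau$-tilting we have $\Ext^1_A(P,P)=0$, and by definition of $\opname{Fac}M$ there is a nonzero map $M\to M_i$ for each $i$. The remaining point is to see $\Hom_A(M_i,M)=0$ for all $i$: in Proposition~\ref{p:lowerbound} this followed from the no-cycle lemma for $\add T$ inside $\der^b(\ch)$, but here we instead use directly that $\mod A$ has no cycle, so a nonzero composite $M\to M_i\to M$ would give a cycle (the maps are non-isomorphisms since $M_i\not\cong M$), contradiction. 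Then the $2$-term silting object $T$ corresponding to $P$ satisfies $\Hom_A(T,M)=k$ and $\Hom_A(T,\Sigma^i M)=0$ for $i\neq0$ by the same direct calculation as in Proposition~\ref{p:lowerbound}, so $\dimv M$ is a positive $c$-vector by Proposition~\ref{p:criterion}.

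Putting the two inclusions together gives $\opname{cv}^+(A)=\opname{exdv}(A)$. I expect the only mildly delicate point to be the $\Hom_A(T,\Sigma^iM)$ computation — identifying the triangle $P_1^P\to P_0^P\to T\to\Sigma P_1^P$ and checking $\Hom$-vanishing in all degrees — but this is verbatim the computation already carried out in the proof of Proposition~\ref{p:lowerbound}, so it can simply be cited rather than repeated; indeed the whole argument shows $\opname{dv}(A)=\opname{\tau dv}(A)\subseteq\opname{cv}^+(A)$ and one need not even invoke the hereditary-category machinery. The genuinely new input compared to Proposition~\ref{p:lowerbound} is replacing the no-cycle lemma for $\der^b(\ch)$ by the hypothesis that $\mod A$ itself is cycle-free, which is exactly what makes representation-directed algebras behave like the heart case without any piecewise-hereditary assumption.
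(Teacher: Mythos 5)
Your proof is correct and follows essentially the same route as the paper: take $\opname{Fac}M$, pass to the support $\tau$-tilting module $P(\opname{Fac}M)$ and its $2$-term silting object $T$, and apply Proposition~\ref{p:criterion}, with the converse inclusion coming from $\opname{cv}^+(A)\subseteq\opname{dv}(A)=\opname{exdv}(A)$. The only difference is that you spell out the step the paper leaves as ``a direct calculation'' — namely that $\Hom_A(M_i,M)=0$ because a pair of nonzero non-isomorphisms $M\to M_i\to M$ would be a cycle in $\mod A$ — which is exactly the intended substitute for Lemma~\ref{l:nocycle} in this setting.
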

\begin{proof}
We need to  prove the dimension vector of each indecomposable $A$-module is a positive $c$-vector.
Let $M$ be an arbitrary indecomposable $A$-modules. Consider the torsion class $\opname{Fac}M$ generated by $M$, which is a  functorially finite torsion class.  Let  $N:=P(\opname{Fac}M)$ be the direct sum of one copy of each of the indecomposable $\Ext$-projective objects in $\opname{Fac}M$. We clearly have $M\in \add N$. By Theorem~\ref{t:bijection fftorsion-tautilting}, we deduce that $N$ is a support $\tau$-tilting $A$-module. Equivalently, there is a projective $A$-module $P$ such that $(N, P)$ is a support $\tau$-tilting pair. Let $T$ be the corresponding $2$-term silting complex.  A direct calculation shows that $\Hom_A(T, M)\cong k$ and $\Hom_A(T, \Sigma^iM)=0$ for $i\neq 0$.
  In particular, the dimension vector $\dimv M$ is a $c$-vector of $A$ by Proposition~\ref{p:criterion}. This finishes the proof.
\end{proof}
Note that an algebra derived equivalent to a representation-finite hereditary algebra has to be a representation-direct algebra. We have the following special case of the above result.
\begin{corollary}
Let $A$ be finite-dimensional algebra over $k$. If  $A$ is derived equivalent to a representation-finite hereditary algebra, then $\opname{cv}^+(A)=\opname{exdv}(A)$.
\end{corollary}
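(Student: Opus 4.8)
The plan is to obtain this as a special case of the preceding Proposition, by showing that every finite-dimensional algebra $A$ derived equivalent to a representation-finite hereditary algebra $H$ is representation-directed. Recall that a connected representation-finite hereditary $k$-algebra is Morita equivalent to the path algebra $kQ$ of a Dynkin quiver $Q$, and that for such an algebra the module category $\mod H$ is representation-directed: it has no cycle of non-zero non-isomorphisms between indecomposables (equivalently, every indecomposable is exceptional and the indecomposables can be totally ordered compatibly with non-zero morphisms). Since $A$ is derived equivalent to $H$, the standard heart $\mod A$ of $\der^b(\mod A)$ is carried by the equivalence $\der^b(\mod A)\cong\der^b(\mod H)$ to the heart of a bounded $t$-structure on $\der^b(\mod H)$; as such an equivalence preserves indecomposability, non-zero morphisms and non-isomorphisms, a cycle in $\mod A$ would produce a cycle in $\der^b(\mod H)$. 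So it suffices to prove that the triangulated category $\der^b(\mod H)$ has no cycle at all.

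For this I would use that $H$ is hereditary: every indecomposable object of $\der^b(\mod H)$ has the form $\Sigma^n X$ with $X\in\mod H$ indecomposable and $n\in\Z$, and for indecomposables $X,Y\in\mod H$ one has $\Hom_{\der^b(\mod H)}(\Sigma^i X,\Sigma^j Y)\cong\Ext_H^{j-i}(X,Y)$, which vanishes unless $j-i\in\{0,1\}$. Hence any non-zero morphism between indecomposables of $\der^b(\mod H)$ either preserves the shift --- in which case it is a morphism inside the copy $\Sigma^n\mod H$ --- or raises it by exactly $1$. Along a cycle $M_0\to M_1\to\cdots\to M_r=M_0$ the sequence of shifts is therefore weakly increasing and returns to its initial value, so it is constant, and the whole cycle lies in a single $\Sigma^n\mod H\cong\mod H$. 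Since each map is a non-zero non-isomorphism, this is a cycle in $\mod H$, contradicting the representation-directedness of $\mod H$. Therefore $\der^b(\mod H)$, and with it $\mod A$, has no cycle, so $A$ is representation-directed.

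Once $A$ is known to be representation-directed, the equality $\opname{cv}^+(A)=\opname{exdv}(A)$ is precisely the content of the preceding Proposition (and one notes in addition that for a representation-directed algebra every indecomposable module is $\tau$-rigid and exceptional, so all these sets also coincide with $\opname{dv}(A)$ and $\opname{\tau dv}(A)$). The argument involves no real difficulty beyond the bookkeeping with shifts in $\der^b(\mod H)$; the only point to be slightly careful about is that isomorphisms are excluded from paths by definition, so a constant-shift cycle genuinely lives in $\mod H$ and the directedness hypothesis there applies. Alternatively, one could simply invoke the classical fact that an algebra derived equivalent to a hereditary algebra of Dynkin type is an iterated tilted algebra of Dynkin type and hence representation-directed, but the short self-contained argument above seems preferable here.
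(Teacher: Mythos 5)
Your proposal is correct and takes essentially the same route as the paper: the paper also deduces the corollary from the preceding proposition on representation-directed algebras, merely asserting without proof that an algebra derived equivalent to a representation-finite hereditary algebra is representation-directed. Your shift-bookkeeping argument in $\der^b(\mod H)$ correctly supplies a proof of that asserted fact, so the two arguments coincide in substance.
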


\subsection{$c$-vectors of cluster-tilted algebras}~\label{s:cluster-tilted}
We follow ~\cite{BMRRT}. Let $H$ be a finite-dimensional hereditary $k$-algebra and $\der^b(H)$ the bounded derived category of finitely generated right $H$-modules. Denote by $\Sigma$ the suspension functor of $\der^b(H)$ and $\tau$ the Auslander-Reiten translation functor.  The cluster category $\cc_H$ has been introduced in ~\cite{BMRRT} as the orbit category $\der^b(H)/ \tau^{-1}\circ \Sigma$ of $\der^b(H)$. It admits a canonical triangle structure such that the projection $\pi_H:\der^b(H)\to \der^b(H)/\tau^{-1}\circ \Sigma$ is a triangle functor~\cite{Keller05}.  An object $T$ in $\cc_H$ is called a {\it cluster-tilting object} provided that
\begin{itemize}
\item[$\circ$]$\Hom_{\cc_H}(T, \Sigma T)=0$;
\item[$\circ$] if $X\in \cc_H$ such that $\Hom_{\cc_H}(T,\Sigma X)=0$, then $X\in \add T$.
\end{itemize}
Let $n$ be the number of pairwise non-isomorphic simple $H$-modules. Each basic cluster-tilting object in $\cc_H$ has exactly $n$ indecomposable direct summands.
The endomorphism algebra $\End_{\cc_H}(T)$ of a basic cluster-tilting object $T\in \cc_H$ is a {\it cluster-tilted algebra} of type $H$ ({\it cf.}~\cite{BMR07}). It is known that cluster-tilted algebras are 1-Gorenstein algebras. Moreover, the functor $\Hom_{\cc_H}(T,?):\cc_H\to \mod \End_{\cc_H}(T)$ yields an equivalence $\cc_H/\Sigma T\cong \mod \End_{\cc_H}(T)$, where $\cc_H/\Sigma T$ is the additive quotient of $\cc_H$ by the morphism factorizing through $\Sigma T$ ({\it cf.}~\cite{BMR07, KR07}).

\begin{proposition}
Let $H$ be a finite-dimensional hereditary $k$-algebra and $\cc_H$ the corresponding cluster category. Let $T$ be a cluster-tilting object and $A$ the endomorphism algebra of $T$. Let $M$ be an indecomposable preprojective or preinjective $H$-module such that $\Hom_{\cc_H}(T,M)\neq 0$, then the dimension vector $\dimv \Hom_{\cc_H}(T,M)$ of $A$-module is a positive $c$-vector of $A$.
\end{proposition}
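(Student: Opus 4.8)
The plan is to reduce the statement to an application of the criterion for positive $c$-vectors (Proposition~\ref{p:criterion}) combined with the no-cycle property of rigid objects in $\der^b(H)$ (Lemma~\ref{l:nocycle}), exactly in the spirit of the proof of Proposition~\ref{p:lowerbound}. First I would fix a cluster-tilting object $T\in\cc_H$ with $A=\End_{\cc_H}(T)$ and an indecomposable preprojective or preinjective $H$-module $M$ with $\Hom_{\cc_H}(T,M)\neq 0$. Since $\cc_H$ is $2$-Calabi--Yau and $T$ is cluster-tilting, the condition $\Hom_{\cc_H}(T,M)\neq 0$ means $M$ is not a direct summand of $\Sigma T$; I want to upgrade $M$ to a $2$-term silting object situation in $\per A$. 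The key observation is that $M$, being preprojective or preinjective, lies in a ``standard'' fundamental domain of $\der^b(H)$ for the cluster category, and one can choose a hereditary algebra $H'$ derived equivalent to $H$ (a tilted algebra reachable by APR-type tilts, or simply $H$ itself after reindexing) so that $M$ becomes an actual $H'$-module and is $\tau$-rigid there; here I would use the fact that any indecomposable preprojective/preinjective module over a hereditary algebra is exceptional and $\tau$-rigid, and that these properties are detected by $\Hom_{\cc_H}$-vanishing.

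The core of the argument is then to realize $\dimv_A\Hom_{\cc_H}(T,M)$ as a positive $c$-vector. I would proceed as follows. Consider the torsion class $\ct_M:=\opname{Fac}\,\Hom_{\cc_H}(T,M)$ in $\mod A$ generated by the indecomposable $A$-module $\Hom_{\cc_H}(T,M)$ (nonzero by hypothesis). This need not be functorially finite unless $\Hom_{\cc_H}(T,M)$ is $\tau$-rigid, so the real work is to show that $\Hom_{\cc_H}(T,M)$ is $\tau$-rigid as an $A$-module. For this I would use the equivalence $\Hom_{\cc_H}(T,?):\cc_H/\Sigma T\xrightarrow{\sim}\mod A$ together with the $1$-Gorenstein property of cluster-tilted algebras: an $A$-module lifted from an indecomposable rigid object of $\cc_H$ lying in the image of the $H$-module category is $\tau$-rigid provided the corresponding object has no ``backward'' maps, which is where Lemma~\ref{l:nocycle} enters — the subcategory $\add(T\oplus M)$ of $\der^b(H)$ has no cycle because $\Hom_{\der^b(H)}(T\oplus M,\Sigma(T\oplus M))=0$ (using that $M$ is preprojective/preinjective and hence rigid in $\der^b(H)$, and that $\Hom_{\cc_H}(T,\Sigma M)=0$ lifts appropriately). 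Once $\Hom_{\cc_H}(T,M)$ is known to be $\tau$-rigid, Theorem~\ref{t:bijection fftorsion-tautilting} gives a support $\tau$-tilting $A$-module $N=P(\ct_M)$ containing $\Hom_{\cc_H}(T,M)$ as a summand, hence a support $\tau$-tilting pair $(N,P)$ and a corresponding $2$-term silting object $T'\in\per A$. A direct $\Hom$-computation, using that $\Hom_{\cc_H}(T,M)$ is $\Ext$-projective in $\ct_M$ and that Lemma~\ref{l:nocycle} forces $\Hom_A(N_i,\Hom_{\cc_H}(T,M))=0$ for the other summands $N_i$ of $N$, shows $\Hom_A(T',\Hom_{\cc_H}(T,M))\cong k$ and $\Hom_A(T',\Sigma^i\Hom_{\cc_H}(T,M))=0$ for $i\neq 0$. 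By Proposition~\ref{p:criterion}(1), $\dimv\Hom_{\cc_H}(T,M)$ is a positive $c$-vector of $A$.

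I expect the main obstacle to be establishing that $\Hom_{\cc_H}(T,M)$ is actually $\tau$-rigid over the cluster-tilted algebra $A$ — cluster-tilted algebras have infinite global dimension, so $\tau$-rigidity is genuinely subtler than in the hereditary or piecewise hereditary case and one cannot simply invoke projective dimension $\leq 1$. The resolution should come from a careful use of the $2$-Calabi--Yau structure of $\cc_H$ and the relation between $\tau_A$ and the AR translation of $\cc_H$: for a module lifted from an object that sits inside the image of an $H$-module category (which is exactly the preprojective/preinjective case, since such modules survive to any cluster-tilting shift without being projective summands in the relevant tilted picture), the self-extension condition $\Ext^1_A(\Hom_{\cc_H}(T,M),\Hom_{\cc_H}(T,M))=0$ translates to $\Hom_{\cc_H}(M,\Sigma M)=0$, which holds because $M$ is a preprojective or preinjective $H$-module and these are rigid in $\der^b(H)$ and remain rigid in $\cc_H$. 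Combining this with the $1$-Gorenstein property (which controls the gap between rigid and $\tau$-rigid for these algebras) yields $\tau$-rigidity. The remaining steps — invoking Theorem~\ref{t:bijection fftorsion-tautilting}, passing to the $2$-term silting object, and the final $\Hom$-vanishing computation via Lemma~\ref{l:nocycle} — are routine and parallel the proof of Proposition~\ref{p:lowerbound}.
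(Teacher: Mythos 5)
Your reduction to Proposition~\ref{p:criterion} is the right target, and the part you flag as the ``main obstacle'' is actually the easy part: that $\Hom_{\cc_H}(T,M)$ is $\tau$-rigid over $A$ is the known statement (from Adachi--Iyama--Reiten) that $\Hom_{\cc_H}(T,-)$ carries rigid objects of $\cc_H$ not in $\add\Sigma T$ to $\tau$-rigid $A$-modules; no $1$-Gorenstein gymnastics are needed. The genuine gap is in the step you call routine: the vanishing $\Hom_A\bigl(N_i,\Hom_{\cc_H}(T,M)\bigr)=0$ for the other indecomposable $\Ext$-projectives $N_i$ of $\opname{Fac}\Hom_{\cc_H}(T,M)$. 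You justify it by Lemma~\ref{l:nocycle}, but that lemma lives in $\der^{b}(\ch)$ for a hereditary abelian category, and for a cluster-tilted algebra $A$ the category $\mod A$ is not a heart of a $t$-structure on $\der^b(H)$ (this is exactly why the quasitilted argument of Proposition~\ref{p:lowerbound} does not transfer). The morphism spaces you must control are those of $\cc_H$, and $\Hom_{\cc_H}(X,Y)=\Hom_{\der^b(H)}(X,Y)\oplus\Hom_{\der^b(H)}(X,\tau^{-1}\Sigma Y)$ contains components invisible to $\der^b(H)$; rigid objects in $\cc_H$ do admit cycles in their additive closure (any cluster-tilting object whose endomorphism quiver has an oriented cycle, e.g.\ type $A_3$ with the $3$-cycle, gives one). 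So ``$\add(T\oplus M)$ has no cycle in $\der^b(H)$'' neither makes literal sense for $T\in\cc_H$ nor, for chosen lifts, implies anything about $\Hom_A(N_i,-)$. Moreover, for a general algebra there is no reason the co-Bongartz completion $P(\opname{Fac}X)$ of an indecomposable $\tau$-rigid $X$ has no nonzero maps from its other summands into $X$; that property is precisely what must be engineered, and your proposal never uses the preprojective/preinjective hypothesis where it is needed for this.

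The paper's proof avoids $\opname{Fac}$ altogether and works inside $\cc_H$: using that $M$ is preprojective or preinjective (so, after a sequence of reflections, $M$ may be taken to be a simple projective, resp.\ simple injective, module over a derived-equivalent hereditary algebra), one completes $M$ to a cluster-tilting object $T_M=M\oplus M_1\oplus\cdots\oplus M_{n-1}$ of $\cc_H$ with $\Hom_{\cc_H}(M_i,M)=0$ for all $i$. Applying $\Hom_{\cc_H}(T,-)$ then gives a support $\tau$-tilting $A$-module containing $\Hom_{\cc_H}(T,M)$, and since $\Hom_A$-spaces are quotients of $\Hom_{\cc_H}$-spaces, the required vanishing is built in by the choice of $T_M$; the criterion of Proposition~\ref{p:criterion} then applies. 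To repair your argument you would have to replace the appeal to Lemma~\ref{l:nocycle} by such an explicit completion in the cluster category (or otherwise prove the vanishing for your $N_i$), which is exactly the missing idea.
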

\begin{proof}
It is easy to see that there is a cluster tilting object $T_M=M\oplus M_1\cdots\oplus M_{n-1}$ such that $\Hom_{\cc_H}(M_i,M)=0$ for all $1\leq i\leq n-1$. Applying the functor $\Hom_{\cc_H}(T,?)$, we deduce that $N_A:=\Hom_{\cc_H}(T,T_M)$ is a support $\tau$-tilting $A$-module. Let $P$ be the $2$-term silting object corresponding to $N_A$. A direct calculation shows that \[\Hom_A(P,\Sigma^i \Hom_{\cc_H}(T,M))=\begin{cases}k&i=0;\\0&\text{else}.\end{cases}\]
In particular, $\dimv \Hom_{\cc_H}(T,M)$ is a positive $c$-vector of $A$ by Proposition~\ref{p:criterion}.
\end{proof}
Note that for a representation-finite hereditary algebra $H$, each $H$-module is a preprojective module.
As a consequence, we recover the following equality of $c$-vectors  for skew-symmetric cluster algebras of finite type in~\cite{Chavez14}.
\begin{corollary}
Let $A$ be a cluster-tilted algebra of representation-finite type. We have $\opname{cv}^+(A)=\opname{dv}(A)$.
\end{corollary}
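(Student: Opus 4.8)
The plan is to bootstrap the corollary from the proposition immediately preceding it, which already produces positive $c$-vectors of $A$ from the preprojective and preinjective $H$-modules sitting in the cluster category. One inclusion is free: by Proposition~\ref{p:criterion} every positive $c$-vector of $A$ is the dimension vector of an indecomposable $A$-module, so $\opname{cv}^{+}(A)\subseteq\opname{dv}(A)$ with no extra work, and the whole problem reduces to showing $\dimv N\in\opname{cv}^{+}(A)$ for every indecomposable $A$-module $N$.

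First I would pass through the equivalence $\cc_H/\Sigma T\cong\mod A$ induced by $\Hom_{\cc_H}(T,?)$; since this is an additive quotient functor it carries indecomposables to indecomposables, so $N\cong\Hom_{\cc_H}(T,X)$ for some indecomposable $X\in\cc_H$ with $\Hom_{\cc_H}(T,X)\neq 0$. Because $H$ is representation-finite, the indecomposable objects of $\cc_H$ are, by the standard description of the fundamental domain of $\tau^{-1}\Sigma$ on $\der^b(H)$ \cite{BMRRT}, precisely the indecomposable $H$-modules together with $\Sigma P_1,\dots,\Sigma P_n$. If $X$ is an indecomposable $H$-module, then it is preprojective (every module over a representation-finite hereditary algebra is preprojective) and $\Hom_{\cc_H}(T,X)=N\neq 0$, so the preceding proposition applies verbatim and $\dimv N\in\opname{cv}^{+}(A)$.

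The only case not handled directly is $X=\Sigma P_j$, and this is where I expect the sole (mild) obstacle to lie, since $\Sigma P_j$ is not an $H$-module and the proposition as stated does not apply. The remedy is to rotate the cluster-tilting object: $\Sigma$ is a triangle autoequivalence of $\cc_H$, hence $\Sigma^{-1}T$ is again a basic cluster-tilting object, and the ring isomorphism $\End_{\cc_H}(T)\cong\End_{\cc_H}(\Sigma^{-1}T)$, $\phi\mapsto\Sigma^{-1}\phi$, respects the decompositions into indecomposable summands, so it identifies the two resulting notions of dimension vector. Applying $\Sigma^{-1}$ to homomorphisms gives an isomorphism of $A$-modules $N\cong\Hom_{\cc_H}(T,\Sigma P_j)\cong\Hom_{\cc_H}(\Sigma^{-1}T,P_j)$, and now $P_j$ is a preprojective $H$-module with $\Hom_{\cc_H}(\Sigma^{-1}T,P_j)=N\neq 0$; applying the preceding proposition to the cluster-tilting object $\Sigma^{-1}T$ yields $\dimv N\in\opname{cv}^{+}(\End_{\cc_H}(\Sigma^{-1}T))=\opname{cv}^{+}(A)$. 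Combining the two cases gives $\opname{dv}(A)\subseteq\opname{cv}^{+}(A)$, hence $\opname{cv}^{+}(A)=\opname{dv}(A)$. The technical points I would be careful to spell out are that $X$ may indeed be chosen indecomposable, that $\Sigma^{-1}T$ is cluster-tilting with endomorphism algebra $A$, and that the isomorphism above matches up the primitive idempotents so that dimension vectors transport correctly — all routine, but worth a sentence each.
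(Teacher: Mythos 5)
Your proposal is correct, and it follows the same basic route as the paper: reduce to the preceding proposition via the equivalence $\cc_H/\Sigma T\cong\mod A$, using that over a representation-finite hereditary algebra every indecomposable module is preprojective, together with the easy inclusion $\opname{cv}^{+}(A)\subseteq\opname{dv}(A)$ from Proposition~\ref{p:criterion}. The genuine difference is that you explicitly treat the indecomposable $A$-modules of the form $\Hom_{\cc_H}(T,\Sigma P_j)$ with $P_j\notin\add T$: the paper's deduction consists only of the remark that all $H$-modules are preprojective and does not mention these summands of the fundamental domain, even though they do occur whenever $T$ is not (up to the identifications in $\cc_H$) the image of $H$ itself, and the corresponding $A$-modules are not of the form $\Hom_{\cc_H}(T,M)$ for any $H$-module $M$ (for instance, for $H$ of type $A_2$ and $T=P_1\oplus S_1$, the simple $A$-module at the vertex of $S_1$ arises only as $\Hom_{\cc_H}(T,\Sigma P_2)$). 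Your fix -- replacing $T$ by the cluster-tilting object $\Sigma^{-1}T$, identifying $\End_{\cc_H}(\Sigma^{-1}T)\cong A$ compatibly with the primitive idempotents, and applying the proposition to the projective (hence preprojective) module $P_j$ with $\Hom_{\cc_H}(\Sigma^{-1}T,P_j)\cong\Hom_{\cc_H}(T,\Sigma P_j)\neq 0$ -- is valid and closes this case cleanly; alternatively one could invoke the fact from~\cite{BMRRT} that every cluster-tilting object is reachable from a tilting module over a derived-equivalent hereditary algebra, but your argument is more self-contained. So what your write-up buys, compared with the paper, is precisely the completeness of the inclusion $\opname{dv}(A)\subseteq\opname{cv}^{+}(A)$ at the shifted projectives, which the paper leaves to the reader.
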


%%%%

\begin{appendix}
\section{Sign-coherence of c-vectors for skew-symmetric cluster algebras}~\label{S:appendix}
In this section, we recall the tropical dualities between $c$-vectors and $g$-vectors for cluster algebras with principal coefficients.
 We give an interpretation of cluster algebras of finite type via the finiteness of $c$-vectors and a short proof for the sign-coherence property of $c$-vectors for skew-symmetric cluster algebras.

\subsection{ Cluster algebras with principal coefficients and $c$-vectors}
We follow ~\cite{FZ07,FuKeller10}.
For an integer $x$, we set $[x]_+=\opname{max}\{x,0\}$ and $\opname{sgn}(x)=\begin{cases}\frac{x}{|x|}&x\neq 0\\0&x=0\end{cases}$.
Let $1\leq n\leq m\in \N$. Let $\mathbb{QP}$ be the algebra of Laurent polynomials in the variables $x_{n+1}, \cdots, x_{m}$ and $\mathcal{F}$ the field of fractions of
the ring of polynomials with coefficients in $\mathbb{QP}$ in $n$ indeterminates.
A matrix $B\in M_n(\mathbb{Z})$ is {\it skew-symmetrizable} if there exists a diagonal martrix $D=\operatorname{diag}\{d_1,\cdots, d_n\}$ with positive integer entries  such that $DB$ is skew-symmetric. In this case, the matrix $D$ is call a {\it skew-symmetrizer} of $B$.
A {\it seed} in $\mathcal{F}$ is a pair $(\wt{B}, \mathbf{x})$ consisting of an $m\times n$ integer matrix $\wt{B}$ whose {\it principal part} (that is the submatrix formed by the first $n$ rows) is skew-symmetrizable  and a free generating set $\mathbf{x}=\{x_1,x_2, \dots, x_n\}$ of the field $\mathcal{F}$. The matrix $\wt{B}$ is  the {\it exchange matrix } and $\mathbf{x}$ is the {\it cluster} of the seed $(\wt{B},\mathbf{x})$. Elements of the cluster $\mathbf{x}$   are  {\it cluster variables} of the seed $(\wt{B}, \mathbf{x})$.

 For any $1\leq k\leq n$, the {\it seed mutation of $(\widetilde{B},\mathbf{x})$ in the direction $k$} transforms $(\widetilde{B},\mathbf{x})$ into a new seed $\mu_k(\widetilde{B}, \mathbf{x})=(\widetilde{B}',\mathbf{x}')$, where
 \begin{itemize}
 \item[$\bullet$] the entries $b_{ij}'$ of $\wt{B}'$ are given by
\[b_{ij}'=\begin{cases}-b_{ij} &\text{if }i=k ~\text{or}~j=k,\\
b_{ij}+\opname{sgn}(b_{ik})[b_{ik}b_{kj}]_{+}& \text{else}.\end{cases}
\]
\item[$\bullet$] the cluster $\mathbf{x}'=\{x_1',\cdots, x_n'\}$ is given by $x_j'=x_j$ for $j\neq k$ and $x_k'\in \mathcal{F}$ is determined by the {\it exchange relation}
\[x_k'x_k=\prod_{i=1}^mx_i^{[b_{ik}]_+}+\prod_{i=1}^mx_i^{[-b_{ik}]_+}.
\]
\end{itemize}
Mutation in a fixed direction is an involution.  The {\it cluster algebra with coefficients} $\ca(\wt{B})=\ca(\wt{B}, \mathbf{x})$ is the subalgebra of $\mathcal{F}$ generated by all the cluster variables which can be obtained from the initial seed $(\wt{B}, \mathbf{x})$ by iterated mutations.
We call a cluster algebra {\it skew-symmetric type} if the principal part of its initial exchange matrix is skew-symmetric.
 If $m=2n$ and the {\it coefficient part} of initial exchange matrix $\wt{B}$ (that is the submatrix formed by the last $m-n$ rows) is the identity matrix $E_n$, then $\ca(\wt{B})$ is a {\it  cluster algebra with principal coefficients}.

 Let $\mathbb{T}_n$ be the $n$-regular tree, whose edges are labeled by the numbers $1,2,\cdots, n$ so that the $n$ edges emanating from each vertex carry different labels.
 A {\it cluster pattern} is the assignment of a seed $(\wt{B}_t,\mathbf{x}_t)$ to each vertex $t$ of $\mathbb{T}_n$ such that the seeds assigned to vertices $t$ and $t'$ linked by an edge labeled $k$ are obtained from each other by  the seed mutation $\mu_k$. A cluster pattern is uniquely determined by an  assignment of the initial seed $(\wt{B}, \mathbf{x})$ to any vertex $t_0\in \mathbb{T}_n$.

 Let $\ca(\widetilde{B})$ be a cluster algebra with principal coefficients. We fix a cluster pattern of $\ca(\wt{B})$.
For any $t\in \mathbb{T}_n$, let $C_t$ be the coefficient part of the matrix $\widetilde{B}_t$. Each column vector of $C_t$ is called a {\it $c$-vector} of $\ca(\widetilde{B})$.
The following sign-coherence property has been conjectured by Fomin-Zelevinsky~\cite{FZ07} for any cluster algebra $\ca(\widetilde{B})$ with principal coefficients, which has been established in~\cite{GHKK14} recently.
\begin{theorem}~\label{t:sign-coherenceofclusteralgebra}
Each $c$-vector of $\ca(\widetilde{B})$ is sign-coherence.
\end{theorem}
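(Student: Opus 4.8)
The plan is to reduce the sign-coherence of an arbitrary $c$-vector of $\ca(\wt{B})$ to that of the $c$-vectors of a suitable \emph{finite-dimensional} algebra, which is Theorem~\ref{t:sign-coherence}. Since the principal part of the initial exchange matrix is skew-symmetric, fix a quiver $Q$ on $\{1,\dots,n\}$ realizing it (so $b_{ij}$ is the number of arrows $i\to j$ minus the number of arrows $j\to i$) and a nondegenerate potential $W$ on $Q$; when $Q$ is acyclic one takes $W=0$, so that $J(Q,W)=kQ$ is already finite-dimensional and no truncation is needed below. Let $\Gamma$ be the complete Ginzburg dg algebra of $(Q,W)$. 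By Keller--Yang~\cite{KY12} and the cluster/derived dictionary (see also \cite{Nagao13, Plamondon11, FuKeller10}), the mutations of $\ca(\wt{B})$ based at the initial vertex $t_0$ are modelled by mutations of the canonical silting object of $\Gamma$: for a vertex $t$ of $\mathbb{T}_n$ the $g$-matrix $G_t$ of $\ca(\wt{B})$ is the index matrix of the $t$-th iterated silting mutation $T_t\in\per\Gamma$ relative to $\Gamma$, and the $c$-matrix $C_t$ (the coefficient part of $\wt{B}_t$) equals $(G_t^{T})^{-1}$; equivalently the columns of $C_t$ are the classes, in the basis $[S_1],\dots,[S_n]$ of $\go(\der_{fd}(\Gamma))$ given by the simple $\Gamma$-modules, of the simple objects of the $t$-th mutated heart $\ca_t\subset\der_{fd}(\Gamma)$. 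None of this presupposes sign-coherence; in particular the identity $C_t=(G_t^{T})^{-1}$ is here a consequence of the dictionary — it records that the summands of $T_t$ and the simples of $\ca_t$ are dual bases with respect to the Euler form, as in Section~\ref{s:negative dg algebra} — and not of tropical duality.

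Next I would pass to a finite-dimensional truncation. Fix a vertex $t$; the path from $t_0$ to $t$ in $\mathbb{T}_n$ has some finite length $\ell$. Choose $N$ large with respect to $\ell$ and $Q$, and set $A=J(Q,W)/\m^{N}$, a finite-dimensional $k$-algebra, $\m$ being the arrow ideal. For $N$ large enough, the first $\ell$ iterated $\tau$-tilting mutations of $A$ — equivalently, the first $\ell$ $2$-term silting mutations in $\per A$ starting from $A$ — realize the first $\ell$ mutations of $(Q,W)$: the finitely many minimal projective presentations, $g$-vectors and exchange matrices involved do not detect the truncation, hence coincide with the corresponding data over $\Gamma$ and therefore with those of the cluster pattern. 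In particular $C_t=(G_t^{T})^{-1}$ is the $C$-matrix of the support $\tau$-tilting pair of $A$ attached to $t$, so by Proposition~\ref{p:criterion} each column of $C_t$ is, up to sign, the dimension vector of a simple object in the heart of a $2$-term silting object of $\per A$, i.e. a $c$-vector of the finite-dimensional algebra $A$ in the sense of Section~\ref{S:c-vectors and its sign-coherence}.

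Finally, Theorem~\ref{t:sign-coherence}, whose proof rests on the torsion pair on the heart of a $2$-term silting object exhibited in Proposition~\ref{p:torsion pair}, shows that every $c$-vector of $A$ is sign-coherent; the columns of $C_t$ are among them, hence sign-coherent, and since $t$ was arbitrary every $c$-vector of $\ca(\wt{B})$ is sign-coherent. I expect the main obstacle to be precisely the truncation step: one has to check with some care that truncating the possibly infinite-dimensional Jacobian algebra $J(Q,W)$ at a sufficiently high power of the arrow ideal leaves the outcome of a bounded number of $\tau$-tilting mutations — and the associated $g$- and $c$-matrices — unchanged, so that they really do match those of the cluster pattern. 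Granting this, the statement is formal from Keller--Yang~\cite{KY12}, the cluster/derived dictionary, and Theorem~\ref{t:sign-coherence}.
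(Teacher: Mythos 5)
Your proposed reduction to Theorem~\ref{t:sign-coherence} via a finite-dimensional truncation of the Jacobian algebra is a genuinely different route from the appendix, but it has two real gaps. The first is the step you call the ``cluster/derived dictionary''. The cluster-algebraic $C$-matrix $C_t$ is \emph{defined} as the coefficient part of $\widetilde{B}_t$, i.e.\ by the combinatorial matrix mutation with its $[\,\cdot\,]_+$'s, whereas the matrix you produce categorically is $(G_t^{T})^{-1}$, the matrix of classes of the simples of the mutated heart. That these agree at \emph{every} vertex $t$ is precisely the tropical duality of Theorem~\ref{t:tropicalduality}, and the sources you cite for the dictionary (\cite{DWZ10, Nagao13, Plamondon11}) prove this identification \emph{together with} sign-coherence: the induction matching the categorical mutation with the matrix mutation needs to know whether the relevant simple lies in the torsion or the torsion-free part (equivalently, the sign of the corresponding column) to decide between left and right mutation. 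So declaring that the dictionary ``does not presuppose sign-coherence'' either suppresses the inductive argument that actually has to be carried out, or comes close to circularity. The appendix sidesteps this issue: by Proposition 7.1 of \cite{DWZ08} the coefficient part of $\widetilde{B}_t$ is read off combinatorially from the arrows between frozen and mutable vertices of the mutated \emph{framed} quiver $\widetilde{Q}_t$ (no sign input needed), and Theorem~\ref{t:Keller-Yang} is used only to rule out arrows between two frozen vertices in any $\widetilde{Q}_t$, after which the quiver mutation rule forces each column of $C_t$ to be sign-coherent.

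The second gap is the truncation $A=J(Q,W)/\m^{N}$, which you yourself flag as the main obstacle but do not prove. When $J(Q,W)$ is infinite-dimensional it is not automatic that, for $N$ large relative to $\ell$, the first $\ell$ two-term silting ($\tau$-tilting) mutations of $A$ reproduce the exchange matrices, minimal presentations and $g$-vectors of the QP/cluster pattern: under iterated QP mutation, high-order terms of the potential can influence low-order data of the mutated presentations, so ``the truncation is not detected'' requires an actual argument, and nothing in this paper (nor in the results quoted in it) supplies one. Note also that the paper's own proof never needs a finite-dimensional algebra at all --- it works with the Ginzburg dg algebra of the framed quiver throughout, and Theorem~\ref{t:sign-coherence} plays no role in the appendix --- so your reduction, even once repaired, would be trading the paper's short combinatorial-plus-\cite{KellerYang11} argument for two substantial lemmas (the sign-free dictionary and the truncation compatibility) that are currently missing.
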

Let us mention here  that for skew-symmetric cluster algebras, the sign-coherence conjecture has been confirmed by Derksen-Weyman-Zelevinsky~\cite{DWZ10}, Plamondon~\cite{Plamondon11} and Nagao~\cite{Nagao13}.
Demonet~\cite{Demonet10} also established the sign-coherence conjecture for  certain skew-symmetrizable cluster algebras such as cluster algebras admit unfoldings. In the end of this section,
We shall also give a direct proof for the sign-coherence property of skew-symmetric cluster algebras basing on Proposition $6.10$ of ~\cite{FuKeller10}.
 \subsection{$g$-vectors and tropical dualities}
Let $\ca(\widetilde{B})$ be a cluster algebra with principal coefficients. We fix a cluster pattern of $\ca(\widetilde{B})$ as before.

Let $x_1, \cdots, x_n$ be the initial cluster variables.
A fundamental result of Fomin-Zelevinsky~\cite{FZ02} says that each cluster variable $x_j^t$ of the cluster $\mathbf{x}_t$ is expressed as \[X_j^t(x_1, \cdots,x_{2n})\in \Z[x_1^{\pm}, \cdots, x_n^{\pm}, x_{n+1}, \cdots, x_{2n}].\]  Set $\deg (x_i)=e_i $ and $\deg (x_{n+i})=-b_i$ for $1\leq i\leq n$, where $e_1, \cdots, e_n$ is the standard basis of $\Z^n$ and $b_i$ is the $i$-th column of the principal part of $\widetilde{B}$. This makes $\Z[x_1^{\pm}, \cdots, x_n^{\pm}, x_{n+1}, \cdots, x_{2n}]$ into a $\Z^n$-graded ring. In ~\cite{FZ07}, they proved that each $X_j^t$ is homogeneous with respect to the $\Z^n$-grading. Denote by $\deg (X_j^t)=(g_{1j}, \cdots, g_{nj})'\in \Z^n$ the grading of $X_j^t$ and set $G_t=(g_{ij})_{i,j=1}^n$. The matrix $G_t$ is called the {\it $G$-matrix} of $\ca(\widetilde{B})$ at vertex $t$ and its column vectors are {\it $g$-vectors}. The $g$-vectors were introduced in ~\cite{FZ07} to  parameterize cluster variables and they conjectured that
\begin{conjecture}~\label{con:g-vector}
Different cluster variables have different $g$-vectors.
\end{conjecture}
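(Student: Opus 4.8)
The plan is to deduce the statement of Conjecture~\ref{con:g-vector} from the sign-coherence of $c$-vectors (Theorem~\ref{t:sign-coherenceofclusteralgebra}): this is one of the family of conjectures of Fomin--Zelevinsky~\cite{FZ07} that Nakanishi--Zelevinsky~\cite{NZ12} show follow from sign-coherence via the tropical dualities between $C$-matrices and $G$-matrices, and I would reproduce that line of argument; unconditionally it is also a consequence of the scattering-diagram techniques of Gross--Hacking--Keel--Kontsevich~\cite{GHKK14}. As a preliminary observation, since each $X_j^t$ is homogeneous for the $\Z^n$-grading, the $g$-vector is genuinely an invariant of the cluster variable, namely $g_j^t=\deg X_j^t$; so two cluster variables with equal $g$-vectors automatically have equal degree, and, via the separation-of-additions expansion of~\cite{FZ07} (which writes $X_j^t$ as an $F$-polynomial evaluated at the $\hat y$-variables times the monomial $x^{g_j^t}$), the task is to upgrade ``equal degree'' to ``equal''.

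The concrete step I can sketch is the invertibility of the $G$-matrices. For a fixed $t\in\mathbb{T}_n$ the columns of $G_t$ are exactly the $g$-vectors of the $n$ cluster variables in the cluster $\mathbf{x}_t$, so it already follows here that any two cluster variables of one cluster have distinct $g$-vectors once we know that $G_t$ is invertible over $\Z$. I would prove this by induction along the edges of $\mathbb{T}_n$ starting from $G_{t_0}=I_n$: run the mutation recursions for $G_t$ and for $C_t$ in tandem and use the sign-coherence of the columns of $C_t$ at each step to maintain the relation that $G_t$ and $C_t$ are inverse-transpose to one another --- the tropical duality in the sense of Nakanishi~\cite{N11} and Nakanishi--Zelevinsky~\cite{NZ12}. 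Invertibility of $G_t$ is then immediate.

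For cluster variables in different clusters I would follow~\cite{NZ12}: the tropical dualities of the previous step propagate the sign-coherence into control of the $G$-fan of $\ca(\widetilde{B})$ --- the collection of cones spanned by the columns of the matrices $G_t$ --- showing that these cones form a simplicial fan. Since a cluster variable is in particular a cluster monomial, and in the principal-coefficient algebra the $g$-vector of a cluster monomial is the corresponding nonnegative integer combination of the $g$-vectors of the cluster it lives in --- which, by the fan property, recovers both that cone and the combination --- distinct cluster monomials, and a fortiori distinct cluster variables, have distinct $g$-vectors. In the skew-symmetric case there is a more geometric alternative via the additive categorification of~\cite{Plamondon11,Nagao13}, where reachable indecomposable rigid objects correspond to cluster variables, the $g$-vector is their index, and the index is a complete invariant of such an object.

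I expect the main obstacle to be exactly the propagation of sign-coherence through the $G$-matrix mutation recursion --- first to the inverse-transpose relation with the $C$-matrices, and then to the fan property --- since that is where the combinatorial content of the tropical dualities sits; the reduction of the conjecture to these statements, together with the final bookkeeping with the separation formula, is comparatively routine. This is precisely the point at which the sign-coherence input (Theorem~\ref{t:sign-coherenceofclusteralgebra}), and in its absence the machinery of~\cite{GHKK14}, is indispensable.
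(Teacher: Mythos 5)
The first thing to note is that the paper does not prove this statement at all: it is quoted as a conjecture of Fomin--Zelevinsky~\cite{FZ07}, and the paper only records the cases in which it is known --- skew-symmetric cluster algebras via the representation theory of quivers with potentials~\cite{DWZ10,Plamondon11,Nagao13}, and skew-symmetrizable cluster algebras admitting unfoldings~\cite{Demonet10} --- and then invokes it only in those verified cases. So there is no ``paper's own proof'' to match, and any complete argument you give would have to be an independent proof of a statement that, in the generality in which it is stated (arbitrary skew-symmetrizable principal part), was not obtainable by the routine manipulations you sketch.

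Concerning the sketch itself: the part you can actually carry out from sign-coherence (Theorem~\ref{t:sign-coherenceofclusteralgebra}) via the tropical duality of~\cite{N11,NZ12} (Theorem~\ref{t:tropicalduality}) is the invertibility of each $G_t$ over $\Z$, hence that the $g$-vectors \emph{within one cluster} are distinct (indeed a $\Z$-basis). The genuine gap is the step for cluster variables lying in different clusters. You write that you would ``follow~\cite{NZ12}'' to show that the cones spanned by the columns of the $G_t$ form a simplicial fan; but this is not among the consequences of sign-coherence established there --- Nakanishi--Zelevinsky derive the duality formulas, the mutation recursion for $G$-matrices, and conjectures such as 6.13 and 7.12 of~\cite{FZ07}, not the injectivity of $g$-vectors on cluster monomials nor the fan property. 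The fan property is precisely the deep content here, and its known proofs go through the scattering-diagram machinery of~\cite{GHKK14} or, in the skew-symmetric case, through the categorification of~\cite{DWZ10,Plamondon11,Nagao13}, where the $g$-vector is the index of a reachable rigid object and the index is a complete invariant. Your final alternative in the skew-symmetric case is therefore the correct route, but it amounts to citing those theorems rather than proving them, which is exactly what the paper does; as a self-contained derivation from sign-coherence plus tropical duality, the proposal does not close, and the ``main obstacle'' you identify is not a technical bookkeeping issue but the substance of a separate major theorem.
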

This conjecture has been verified for skew-symmetric cluster algebras by Derksen-Weyman-Zelevinsky~\cite{DWZ10}, Plamondon~\cite{Plamondon11}, Nagao~\cite{Nagao13}  by using representation theory of quivers with potentials and for certain skew-symmetrizable cluster algebras ({\it e.g} cluster algebras admit unfoldings) by Demonet~\cite{Demonet10}.
The definition of $g$-vectors is quite different from the one of $c$-vectors, but a recently work of Nakanishi~\cite{N11}  and Nakanishi-Zelevinsky~\cite{NZ12} showed that  there are so-called tropical dualities between $c$-vectors and $g$-vectors. The following theorem has been proved in~\cite{N11} for skew-symmetric cluster algebras and then generalized to skew-symmetrizable cluster algebras in ~\cite{NZ12} by assuming the sign-coherence conjecture.
\begin{theorem}~\label{t:tropicalduality}
 Let $\ca(\widetilde{B})$ be a skew-symmetrizable cluster algebra with principal coefficients whose skew-symmetrizer is $D$, then for each vertex $t\in \mathbb{T}_n$,
\[G_t'DC_t=D.
\]
\end{theorem}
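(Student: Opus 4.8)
The plan is to establish $G_t'D C_t=D$ by induction on the distance $d(t_0,t)$ of $t$ from the root vertex $t_0$ in the $n$-regular tree $\mathbb{T}_n$, reducing the inductive step to a single elementary identity between the ``mutation matrices'' governing the $C$-family and the $G$-family. Throughout, $'$ denotes transpose, as in the statement. The base case $t=t_0$ is immediate: by the definition of principal coefficients the coefficient part of $\widetilde B_{t_0}$ is $E_n$, so $C_{t_0}=E_n$; and since each initial cluster variable $x_j$ has $\deg x_j=e_j$, the matrix $G_{t_0}$ is also $E_n$, whence $G_{t_0}'D C_{t_0}=E_nDE_n=D$.

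For the inductive step I would fix an edge of $\mathbb{T}_n$ with label $k$, joining $t$ to $t'$, and assume $G_t'D C_t=D$. Write $B_t$ for the principal part of $\widetilde B_t$. Because $C_t$ is invertible its $k$-th column is nonzero, and because the $c$-vectors are sign-coherent (now known unconditionally by Theorem~\ref{t:sign-coherenceofclusteralgebra}) the entries of that column share a common sign $\varepsilon\in\{1,-1\}$. Feeding $\varepsilon$ into the exchange-matrix mutation rule restricted to the last $n$ rows of $\widetilde B_t$, one checks directly that $C_{t'}=C_t E$, where $E=E_{k,\varepsilon}(B_t)$ is the $n\times n$ matrix agreeing with $E_n$ except that its $(k,k)$-entry is $-1$ and its off-diagonal entries in row $k$ are $[\varepsilon b^{t}_{kj}]_+$. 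In the same spirit, the $g$-vector mutation recursion of Fomin--Zelevinsky~\cite{FZ07}, which acquires this closed form precisely thanks to sign-coherence of the $c$-vectors (compare~\cite{NZ12}), gives $G_{t'}=G_t F$, where $F=F_{k,\varepsilon}(B_t)$ agrees with $E_n$ except that its $(k,k)$-entry is $-1$ and its off-diagonal entries in column $k$ are $[-\varepsilon b^{t}_{jk}]_+$; equivalently $g^{t'}_k=-g^{t}_k+\sum_i[-\varepsilon b^{t}_{ik}]_+\,g^{t}_i$ and $g^{t'}_j=g^{t}_j$ for $j\neq k$, the $g^{t}_i$ being the columns of $G_t$. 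A short computation, using only $[x]_+=\max\{x,0\}$ and the skew-symmetrizability relation $d_k b^{t}_{kj}=-d_j b^{t}_{jk}$, shows that these two mutation matrices are exchanged by the $D$-twisted transpose, $F=D^{-1}E'D$, and that $E^2=E_n$ (consistent with $\mu_k$ being an involution). Combining these facts with the inductive hypothesis gives
\[ G_{t'}'\,D\,C_{t'} = (G_tF)'\,D\,(C_tE) = F'\,\bigl(G_t'\,D\,C_t\bigr)\,E = F'\,D\,E = (D^{-1}E'D)'\,D\,E = D\,E\,D^{-1}\,D\,E = D\,E^2 = D, \]
which closes the induction and proves the theorem.

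I expect the one genuinely nontrivial ingredient to be the $g$-vector mutation rule in the displayed closed form. The $C$-matrix rule $C_{t'}=C_tE$ is a direct unravelling of the exchange-matrix mutation together with sign-coherence; the relations $F=D^{-1}E'D$ and $E^2=E_n$ are routine finite checks from the explicit shapes of $E$ and $F$; and the final line is then a one-step calculation. By contrast, showing that the grading-theoretic $g$-vectors actually obey $g^{t'}_k=-g^{t}_k+\sum_i[-\varepsilon b^{t}_{ik}]_+\,g^{t}_i$ requires real input, and that is exactly the content of~\cite{FZ07} (in the presence of sign-coherence) and~\cite{NZ12}. So in this appendix I would simply cite that recursion and let the computation above do the rest; no further machinery beyond Theorem~\ref{t:sign-coherenceofclusteralgebra} is needed.
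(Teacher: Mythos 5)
Your computation is correct as far as it goes: the base case $C_{t_0}=G_{t_0}=E_n$, the factorization $C_{t'}=C_tE$ (an elementary consequence of matrix mutation plus sign-coherence), the identities $F=D^{-1}E'D$ and $E^2=E_n$, and the closing induction all check out. But be aware that the paper does not prove this theorem at all -- it is quoted from Nakanishi~\cite{N11} (skew-symmetric case) and Nakanishi--Zelevinsky~\cite{NZ12} (skew-symmetrizable case, assuming sign-coherence), so what you have written is essentially a reconstruction of the inductive proof of \cite{NZ12} rather than an alternative to anything in the text. Two caveats. First, the closed-form $g$-vector recursion $g^{t'}_k=-g^{t}_k+\sum_i[-\varepsilon b^{t}_{ik}]_+g^{t}_i$ that you import is, given the $C$-recursion $C_{t'}=C_tE$ and the invertibility of $E$ and $F$, essentially equivalent to the duality you are proving; as written, your argument therefore carries exactly the weight of that citation and is not independent of \cite{NZ12}. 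If you want it self-contained modulo Theorem~\ref{t:sign-coherenceofclusteralgebra}, derive the $\varepsilon$-form from the unconditional recursion of \cite{FZ07}, $g^{t'}_k=-g^{t}_k+\sum_i[b^{t}_{ik}]_+g^{t}_i-\sum_i[c^{t}_{ik}]_+b^{t_0}_i$, together with the unconditional identity $B_{t_0}C_t=G_tB_t$: when $\varepsilon=+1$ the last sum equals $\sum_i b^{t}_{ik}g^{t}_i$ and when $\varepsilon=-1$ it vanishes, giving your formula in both cases. Second, your use of $d_kb^{t}_{kj}=-d_jb^{t}_{jk}$ tacitly assumes that the fixed $D$ skew-symmetrizes every $B_t$, not just the initial matrix; this is the standard fact that mutation preserves skew-symmetrizers, but it should be stated.
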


\subsection{Cluster algebras of finite type via $c$-vectors}
A cluster algebra $\ca(\widetilde{B})$ is called of {\it finite type} if there are only finitely many different cluster variables. Cluster algebras of finite type are classified by Fomin-Zelevinsky in ~\cite{FZ03}. A much broader class of cluster algebras are  of mutation finite type. A cluster algebra $\ca(\widetilde{B})$ is of {\it mutation finite type} if the mutation-equivalent class of the initial matrix $\widetilde{B}$ is finite. In this case, we also call the matrix $\widetilde{B}$ is of {\it mutation finite type}. It is known that cluster algebras of finite types are of mutation finite types, but the inverse is not true. However, for cluster algebras with principal coefficients, Fomin-Zelevinsky~\cite{FZ07} conjectured that
\begin{conjecture}
Let $\ca(\widetilde{B})$ be a cluster algebra with principal coefficients, then $\ca(\widetilde{B})$ is of finite type if and only if the initial matrix $\widetilde{B}$ is of mutation finite type.
\end{conjecture}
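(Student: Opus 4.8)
The plan is to prove the two implications separately, with essentially all of the content of the statement concentrated in the ``if'' direction and, within that, in the auxiliary results it borrows.

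The ``only if'' direction is routine and I would dispose of it first. If $\ca(\wt B)$ is of finite type then it has finitely many cluster variables, hence finitely many clusters, hence a finite exchange graph and so finitely many seeds $(\wt B_t,\mathbf{x}_t)$; in particular the mutation class $\{\wt B_t\mid t\in\T_n\}$ of $\wt B$ is finite, which by definition says $\wt B$ is of mutation finite type.

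For the ``if'' direction, suppose $\wt B$ is of mutation finite type, so $\{\wt B_t\mid t\in\T_n\}$ is a finite set. Since the coefficients are principal, the coefficient part of $\wt B_t$ is exactly the $C$-matrix $C_t$, so there are only finitely many distinct $C$-matrices. The first real step is to feed this into the tropical duality of Theorem~\ref{t:tropicalduality}: if $D$ is a skew-symmetrizer of the principal part $B$, then $G_t'DC_t=D$, from which $C_t\in\GL_n(\Z)$ and $G_t=D^{-1}(C_t')^{-1}D$ is determined by $C_t$ together with the fixed matrix $D$. Hence there are only finitely many distinct $G$-matrices $G_t$, and thus only finitely many $g$-vectors $\deg(X_j^t)$. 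I would then invoke Conjecture~\ref{con:g-vector}: since distinct cluster variables have distinct $g$-vectors, finitely many $g$-vectors forces finitely many cluster variables, i.e.\ $\ca(\wt B)$ is of finite type.

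The hard part is not this short deduction but the two ingredients it consumes. Both the tropical duality in the skew-symmetrizable generality of Theorem~\ref{t:tropicalduality} and the $g$-vector statement of Conjecture~\ref{con:g-vector} are themselves consequences of the sign-coherence of $c$-vectors (Theorem~\ref{t:sign-coherenceofclusteralgebra}), which is the genuinely deep input --- available for skew-symmetric cluster algebras through \cite{DWZ10}, \cite{Plamondon11} and \cite{Nagao13}, and in general only through \cite{GHKK14}. So an honest self-contained proof of the conjecture is essentially a self-contained proof of sign-coherence, and I would expect that in the skew-symmetric case one can bypass \cite{GHKK14} by running the argument through the representation-theoretic realization of $c$-vectors (for instance via Proposition~$6.10$ of \cite{FuKeller10}, in the spirit of the short proof of sign-coherence given elsewhere in this appendix), so that ``$\wt B$ mutation finite'' is translated into representation-finiteness of the associated cluster category and the finiteness of the set of $c$-vectors.
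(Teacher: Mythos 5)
Your argument follows the same route as the paper's: the ``only if'' direction is the trivial one, and for the ``if'' direction you pass from finitely many matrices $\widetilde{B}_t$ to finitely many coefficient parts $C_t$, then use the tropical duality $G_t'DC_t=D$ of Theorem~\ref{t:tropicalduality} to get finitely many $G$-matrices, and finally Conjecture~\ref{con:g-vector} to conclude there are finitely many cluster variables. This is exactly the chain the paper runs to prove its (slightly stronger) reformulation that finite type is equivalent to finiteness of the set of $c$-vectors, of which mutation-finiteness of $\widetilde{B}$ is an immediate special case.

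The one place where your write-up is weaker than the paper's treatment is the unconditional appeal to Conjecture~\ref{con:g-vector}. As the paper records, that conjecture is established only for skew-symmetric cluster algebras (\cite{DWZ10}, \cite{Plamondon11}, \cite{Nagao13}) and for skew-symmetrizable ones admitting an unfolding (\cite{Demonet10}); it is not presented as a formal consequence of sign-coherence, so your remark that both inputs ``are consequences of sign-coherence'' does not by itself close the general skew-symmetrizable case. The paper closes it differently: finiteness of the $c$-vectors forces the principal part $B$ to be of mutation finite type (Proposition 2.7 of \cite{Seven13}), hence by the classification of mutation-finite type \cite{FST12} the cluster algebra admits an unfolding, and then Demonet's result makes Conjecture~\ref{con:g-vector} available, after which the duality argument applies. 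To match the paper you should either restrict your deduction to the skew-symmetric case (where your concluding representation-theoretic remark is indeed the right justification) or add this unfolding reduction for the skew-symmetrizable case.
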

Recently, this conjecture has been  proved by Seven~\cite{Seven13} basing on the classification of cluster algebras of {\it minimal infinite type}~\cite{Seven11}. In fact, his proof implies the following statement.
\begin{theorem}
Let $\ca(\widetilde{B})$ be a cluster algebra with principal coefficients, then $\ca(\widetilde{B})$ is of finite type if and only if  the set of $c$-vectors $\opname{cv}(\ca(\widetilde{B}))$ of $\ca(\widetilde{B})$ is finite.
\end{theorem}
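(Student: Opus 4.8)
The plan is to obtain both implications from Seven's theorem together with its combinatorial ingredients. First I would dispose of the easy direction. Suppose $\ca(\widetilde{B})$ is of finite type. By the Fomin--Zelevinsky classification~\cite{FZ03} the exchange graph of $\ca(\widetilde{B})$ is then finite, so only finitely many distinct seeds occur, hence only finitely many distinct coefficient matrices $C_t$. Since every $c$-vector is by definition a column of some $C_t$, the set $\opname{cv}(\ca(\widetilde{B}))$ is finite. (Alternatively: finitely many cluster variables give finitely many $g$-vectors, hence finitely many $G$-matrices $G_t$, and then finitely many $C_t$ by tropical duality, Theorem~\ref{t:tropicalduality}, using the now-established sign-coherence.)

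For the converse I would argue by contraposition, assuming $\ca(\widetilde{B})$ is \emph{not} of finite type and aiming to exhibit infinitely many $c$-vectors. The first observation is that finiteness of $\opname{cv}(\ca(\widetilde{B}))$ is equivalent to finiteness of the set $\{C_t : t\in\T_n\}$: each $C_t$ is an $n\times n$ matrix whose columns all lie in $\opname{cv}(\ca(\widetilde{B}))$, so a fixed finite pool of admissible columns produces only finitely many such matrices. Thus it suffices to show that infinitely many $C_t$ occur.

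This is where Seven's work enters. By~\cite{Seven13}, since $\ca(\widetilde{B})$ is of infinite type, the exchange matrix $\widetilde{B}$ is of mutation-infinite type, i.e.\ infinitely many matrices $\widetilde{B}_t$ (with principal part $B_t$ and coefficient part $C_t$) occur; and by the classification of minimal infinite type matrices in~\cite{Seven11}, after finitely many mutations --- reaching some vertex $t_0$ --- and restriction to a suitable index subset $J\subseteq\{1,\dots,n\}$, the principal part $B_{t_0}$ contains a submatrix $B'$ of minimal infinite type. Carrying out at $t_0$ only mutations in the directions of $J$, the resulting $c$-vectors of $\ca(\widetilde{B})$ restrict, on the coordinates indexed by $J$, to $c$-vectors of the cluster algebra $\ca(\widetilde{B'})$ with principal coefficients on $B'$; sign-coherence (Theorem~\ref{t:sign-coherenceofclusteralgebra}) is what keeps this restriction well behaved. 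It then remains to check, over Seven's list of minimal infinite type matrices --- the affine Dynkin families and the remaining sporadic ones such as the Markov matrix --- that each corresponding cluster algebra with principal coefficients already has infinitely many $c$-vectors. In affine type this is immediate, the $c$-vectors forming an infinite sub-root system; in rank two (where minimality means the product of the off-diagonal entries is at least $4$) and for the Markov and other sporadic matrices it is an explicit computation, iterating a single mutation and watching the $c$-vectors grow unboundedly.

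The step I expect to be the main obstacle is precisely this reduction: making rigorous the compatibility of $c$-vectors with passage to a sub-exchange-matrix, so that the minimal-infinite-type classification of~\cite{Seven11} can be applied, and --- equivalently --- clarifying why finiteness of the coefficient matrices $C_t$ already forces finiteness of the entire mutation class of $\widetilde{B}$. Once that bridge is established, the verification over the (well-understood) minimal infinite type list is routine, and the theorem follows by combining it with Seven's equivalence~\cite{Seven13}.
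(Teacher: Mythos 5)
Your easy direction is fine and agrees with the paper (only the ``if'' part needs an argument), but the converse as you propose it has a genuine gap, and you name it yourself: the entire content of the hard direction is the ``bridge'' you leave open, namely that finiteness of the coefficient matrices $C_t$ (equivalently of $\opname{cv}(\ca(\widetilde{B}))$) forces finite type, or contrapositively that infinite type produces infinitely many $c$-vectors. Invoking \cite{Seven13} only converts ``infinite type'' into ``infinite mutation class of $\widetilde{B}$''; it does not tell you that the $C$-matrices, rather than just the principal parts $B_t$, vary infinitely. Your proposed repair via \cite{Seven11} is also not correct as sketched: after mutating to the vertex $t_0$ where a minimal-infinite-type submatrix $B'$ appears, the coefficient part is $C_{t_0}$, not the identity, so mutating only in directions of $J$ and restricting to the $J$-columns does \emph{not} produce the $C$-matrices of $\ca(\widetilde{B'})$ with principal coefficients; relating the two (and then still checking unbounded growth case by case over the minimal-infinite-type list) is essentially a reconstruction of Seven's own proof, not an application of it. So as written the converse is a plan, not a proof.

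The paper closes this gap by a different and much shorter route: finitely many $c$-vectors give finitely many $C_t$, the tropical duality $G_t'DC_t=D$ of Theorem~\ref{t:tropicalduality} then gives finitely many $G_t$, and Conjecture~\ref{con:g-vector} (different cluster variables have different $g$-vectors) bounds the number of cluster variables, i.e.\ gives finite type directly. The role of Seven's and related results is only auxiliary there: in the skew-symmetric case Conjecture~\ref{con:g-vector} is known by \cite{DWZ10,Plamondon11,Nagao13}, while in the general skew-symmetrizable case the paper first deduces from finiteness of $\opname{cv}(\ca(\widetilde{B}))$ that the principal part is of mutation finite type (Proposition 2.7 of \cite{Seven13}), then uses the classification \cite{FST12} to get an unfolding and \cite{Demonet10} to secure Conjecture~\ref{con:g-vector}. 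If you want to salvage your outline, the cleanest fix is to replace your minimal-infinite-type reduction by exactly this duality argument, which is what actually turns ``finitely many $C_t$'' into ``finitely many cluster variables.''
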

In the following, we sketch  a proof basing on tropical dualities between $c$-vectors and $g$-vectors.
Note that only the if part needs a proof.
The above result follows easily from the Theorem~\ref{t:sign-coherenceofclusteralgebra}  and Conjecture ~\ref{con:g-vector}. Namely, if the set $\opname{cv}(\ca{\widetilde{B}})$ is finite, then there are only finitely many different $t\in \mathbb{T}_n$ such that the $C$-matrices $C_t$ are pairwise different. Now by Theorem ~\ref{t:tropicalduality}, we deduce that there are finitely many $t\in \mathbb{T}_n$ such that the $G$-matrices $G_t$ are pairwise different. Hence there are finitely many different cluster variables by Conjecture~\ref{con:g-vector} and  $\ca(\widetilde{B})$ is of finite type.

 Note that  Conjecture~\ref{con:g-vector} are confirmed for all the skew-symmetric cluster algebras and for certain skew-symmetrizable cluster algebras such as cluster algebras admit unfoldings. In particular, the above proof can be applied to  skew-symmetric cluster algebras with principal coefficients.

Now, let us assume that $\ca(\widetilde{B})$ is skew-symmetrizable. Let $B$ be the principal part of $\widetilde{B}$. If $\opname{cv}(\ca(\widetilde{B}))$ is finite, then one can show that  $B$ is of mutation finite type by using Proposition 2.7 of ~\cite{Seven13}. It follows from the classification of cluster algebras of  mutation finite type~\cite{FST12} that $\ca(\widetilde{B})$ admits an unfolding. By Demonte's result~\cite{Demonet10}, we know that Conjecture~\ref{con:g-vector} is true in this case and hence that above proof applies.

\subsection{Quivers with potentials and mutations}
We follow ~\cite{DWZ08,Keller12}.
Let $Q=(Q_0,Q_1)$ be a finite quiver, where $Q_0$ is the set of vertices and $Q_1$ is the set of arrows. Let $kQ$ be the path algebra of $Q$ and $\hat{kQ}$ its completion with respect to path length. Thus, $\hat{kQ}$ is a topological  algebra and the paths of $Q$ form a topological basis. The {\it continuous zeroth Hochschild homology} of $\hat{kQ}$ is the vector space $HH_0(\hat{kQ})$ obtained as the quotient of $\hat{kQ}$ by the closure of the subspace $[\hat{kQ}, \hat{kQ}]$ of  all commutators.
It has a topological basis formed by the classes of cyclic paths of $Q$. For each arrow $a$  of $Q$, the {\it cyclic derivative} with respect to $a$ is the unique continuous map
\[\partial_a: HH_0(\hat{kQ})\to \hat{kQ}
\]
which takes the class of a path $p$ to the sum
\[\sum_{p=uav}vu
\]
taken over all decompositions of $p$ as a concatenation of path $u, a,v $, where $u,v$ are of length $\geq 0$. A {\it potential} on $Q$ is an element $W$ of $HH_0(\hat{kQ})$ which does not involve cycles of length $\leq 2$.

Let $(Q,W)$ be a quiver with  potential  and $k$ a vertex of $Q$.  With certain mild condition for $(Q,W)$ at the vertex $k$, Derksen-Weyman-Zelevinsky~\cite{DWZ08}  introduced the {\it mutation of $(Q,W)$ in the direction $k$ } which transforms $(Q,W)$ into a new quiver with potential $\mu_k(Q,W)=(Q',W')$ (for the precisely definition, we refer to ~\cite{DWZ08}).
In this case, we call the quiver with potential $(Q,W)$ is {\it mutable at vertex $k$}.
The quiver $Q$ and $Q'$ have the same vertices but different arrows.
In general, the resulting quiver with potential $\mu_k(Q,W)$ may not be mutable at certain vertices. But if the quiver $Q$ has no loops nor $2$-cycles, there exists a {\it non-degenerate} potential $W$ on $Q$ such that we can indefinitely mutate the quiver with potential $(Q,W)$. Moreover, each quiver with potential obtained from $(Q,W)$ by iterated mutations  has no loops nor $2$-cycles.

Let $Q$ be a finite quiver without loops nor $2$-cycles with vertex set $Q_0=\{1,2,\cdots, m\}$. We define an $m\times m$ integer matrix $B(Q)$ associated to $Q$ such that
\[b_{ij}=|\{\text{arrows from vertex $i$ to vertex $j$}\}|-|\{\text{arrows from vertex $j$ to vertex $i$}\}|.
\]
Conversely, for a given integer skew-symmetric matrix $B$, there is a unique quiver $Q$ without loops nor 2-cycles such that $B(Q)=B$. Let $W$ be a non-degenerate potential on $Q$. We may assign each vertex $t\in \mathbb{T}_m$ a quiver with potential $(Q_t,W_t)$ which can be obtained from $(Q,W)$ by iterated mutations such that the quivers with potentials assigned to $t$ and $t'$ linked by an edge labeled $k$ are obtained from each other by the mutation  $\mu_k$. By Proposition $7.1$ in \cite{DWZ08},  if $(Q_t,W_t)$ and $(Q_{t'},W_{t'})$ are linked by an edge $k$, then we have  $B(Q_t)=\mu_k(B(Q_{t'}))$.

\subsection{Ginzburg dg algebras and derived equivalences}
Let $Q$ be a finite quiver and $W$ a potential on $Q$. The Ginzburg dg algebra $\Gamma_{(Q,W)}$ of $(Q,W)$ introduced by Ginzburg~\cite{Ginzburg06} is constructed as follows: Let $\overline{Q}$ be the graded quiver with the same vertices as $Q$ and whose arrows are
\begin{itemize}
\item[$\circ$] the arrow of $Q$, which are of degree $0$;
\item[$\circ$] an arrow $a^*:j\to i$ of degree $-1$ for each arrow $a:i\to j$ of $Q$;
\item[$\circ$] a loop $t_i:i\to i$ of degree $-2$ for each vertex $i$ of $Q$.
\end{itemize}
The underlying graded algebra of $\Gamma_{(Q,W)}$ is the completion of the graded path algebra $k\overline{Q}$ in the category of graded vector spaces with respect to the ideal generated by the arrows of $\overline{Q}$. In particular, the $n$-component of $\Gamma_{(Q,W)}$ consisting of elements of the form $\sum_{p}\lambda_p p$, where $p$ runs over all paths of degree $n$. The differential $d$ of $\Gamma_{(Q,W)}$ is the unique continuous linear endomorphism homogenous of degree $1$ which satisfies the Leibniz rule
\[d(uv)=(du)v+(-1)^pudv,
\]
for all homogeneous $u$ of degree $p$ and all $v$, and takes the following values on the arrows of $\overline{Q}$:
\begin{itemize}
\item[$\circ$] $da=0$ for each arrow $a$ of $Q$;
\item[$\circ$] $d(a^*)=\partial_aW$ for each arrow $a$ of $Q$;
\item[$\circ$] $d(t_i)=e_i(\sum_a[a,a^*])e_i$ for each vertex $i$ of $Q$, where $e_i$ is the lazy path at $i$ and the sum runs over the set of arrows of $Q$.
\end{itemize}

Let $Q$ be a finite quiver without loops nor $2$-cycles with vertex set $\{1,2,\cdots, m\}$ and $W$ a non-degenerate potential on $Q$. Denote by $\Gamma_{(Q,W)}$  the Ginzburg dg algebra associated to $(Q,W)$. Let $k$ be a vertex of $Q$ and $\Gamma_{\mu_k(Q,W)}$ the Ginzburg dg algebra associated to $\mu_k(Q,W)$.  Let $e_1,\cdots, e_m$ be the idempotents of $\Gamma_{(Q,W)}$ and $\Gamma_{\mu_k(Q,W)}$ associated to the vertices of $(Q,W)$ and $\mu_k(Q,W)$. Let $\der(\Gamma_{(Q,W)})$ and $\der(\Gamma_{\mu_k(Q,W)})$ be the derived categories of $\Gamma_{(Q,W)}$ and $\Gamma_{\mu_k(Q,W)}$ respectively.
The  following result is due to Keller-Yang~\cite{KellerYang11}.
\begin{theorem}~\label{t:Keller-Yang}
There is a triangle equivalence
\[\Phi:\der(\Gamma_{\mu_k(Q,W)})\to \der(\Gamma_{(Q,W)})
\]
which sends  the $e_i\Gamma_{\mu_k(Q,W)}$ to $e_i\Gamma_{(Q,W)}$ for $i\neq k$ and to the   mapping cone of the morphism $e_k\Gamma_{\mu_k(Q,W)}\to \oplus_{k\to j}e_j\Gamma_{(Q,W)}$  for $i=k$, where the sum is taken over the arrows in $Q$.
\end{theorem}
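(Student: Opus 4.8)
The plan is to follow the structure of Keller--Yang's proof \cite{KellerYang11}, which splits the combinatorial mutation $\mu_k$ of a quiver with potential into a \emph{premutation} $\wt{\mu}_k$ --- reversing the arrows incident to $k$, adjoining a composite $[ab]$ for each pair of arrows $a\colon i\to k$, $b\colon k\to j$, and correcting the potential $W$ to a new potential $\wt{W}$ --- followed by a \emph{reduction} that splits off a trivial part. Since a right-equivalence of quivers with potential induces an isomorphism of completed Ginzburg dg algebras, and since the Ginzburg dg algebra of a trivial quiver with potential is quasi-isomorphic to the product of copies of $k$ over its vertices, one first checks that a general $(Q',W')$ is right-equivalent to the direct sum of its reduced part and a trivial part, with the canonical projection $\Gamma_{(Q',W')}\to\Gamma_{(Q',W')_{\mathrm{red}}}$ a quasi-isomorphism respecting the idempotents. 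Applied to $(Q',W')=\wt{\mu}_k(Q,W)$, whose reduced part is by definition $\mu_k(Q,W)$, this gives $\der(\Gamma_{\mu_k(Q,W)})\simeq\der(\Gamma_{\wt{\mu}_k(Q,W)})$ compatibly with the $e_i$ (the vertex set being untouched), and reduces the theorem to producing a triangle equivalence $\Phi\colon\der(\Gamma_{\wt{\mu}_k(Q,W)})\iso\der(\Gamma_{(Q,W)})$ with $e_i\mapsto e_i\Gamma$ for $i\neq k$ and $e_k\mapsto\mathrm{Cone}\bigl(e_k\Gamma\to\bigoplus_{k\to j}e_j\Gamma\bigr)$.

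To construct $\Phi$ I would realize it as a derived Morita equivalence attached to an explicit silting object. With $\Gamma=\Gamma_{(Q,W)}$ and $P_i=e_i\Gamma$, set $T_k=\mathrm{Cone}\bigl(P_k\to\bigoplus_{a\colon k\to j}P_j\bigr)$, where the map is right multiplication by the arrows out of $k$, and $T=\bigl(\bigoplus_{i\neq k}P_i\bigr)\oplus T_k$. The defining triangle of $T_k$ recovers $P_k$ from $T$, so $\opname{thick}(T)=\per\Gamma$; and $\Hom_{\der(\Gamma)}(T,\Sigma^{n}T)=0$ for $n>0$ follows from $\Gamma$ being concentrated in non-positive degrees together with the two-term shape of $T_k$. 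Hence $T$ is silting, in particular a compact generator of $\der(\Gamma)$, and by the Morita theory of dg algebras $-\lten_{B}T\colon\der(B)\to\der(\Gamma)$ is a triangle equivalence for $B=\RHom_\Gamma(T,T)$, sending $e_iB$ to $P_i$ for $i\neq k$ and $e_kB$ to $T_k$. The crux is then a quasi-isomorphism $\Gamma_{\wt{\mu}_k(Q,W)}\to B$ respecting idempotents: the arrows of $Q$ not incident to $k$ give the obvious endomorphisms; each path through $k$ produces a composite $[ab]$; the arrows at $k$ are realized, reversed, through the triangle for $T_k$; and the degree $-1$ and degree $-2$ generators of the source, with the differential dictated by the cyclic derivatives of $\wt{W}$ and the relation $\sum_a[a,a^*]$, match the corresponding structure in $B$ once the differential of $\Gamma$ is unravelled. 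Composing $-\lten_B T$ with the induced equivalence $\der(\Gamma_{\wt{\mu}_k(Q,W)})\simeq\der(B)$ gives $\Phi$.

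The main obstacle is this last identification $B\simeq\Gamma_{\wt{\mu}_k(Q,W)}$: it requires a cofibrant (semifree) model for $T$ over $\Gamma$, careful bookkeeping of the signs coming from the shift inside $T_k$ and from the Leibniz rule, and --- most delicately --- a verification that all higher $A_\infty$-operations on $\RHom_\Gamma(T,T)$ vanish, so that $B$ is a genuine dg algebra and its completed bimodule structure is that of a Ginzburg dg algebra. A secondary point is to carry out both the reduction step and the premutation step uniformly for \emph{completed} Ginzburg dg algebras, without assuming in advance that iterated premutations remain free of loops and $2$-cycles; this last fact enters only through the identities $B(Q_t)=\mu_k(B(Q_{t'}))$ and the Derksen--Weyman--Zelevinsky existence of non-degenerate potentials \cite{DWZ08}.
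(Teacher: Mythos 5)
The paper never proves this statement: it is quoted from Keller--Yang \cite{KellerYang11} (``The following result is due to Keller-Yang''), so there is no internal argument to compare yours with. Your proposal reproduces, in outline, exactly the route of the cited source: split $\mu_k$ into premutation plus reduction, observe that reduction only changes the Ginzburg dg algebra by a quasi-isomorphism compatible with the idempotents, form the two-term object $T=\bigl(\bigoplus_{i\neq k}e_i\Gamma\bigr)\oplus T_k$ with $T_k=\mathrm{Cone}\bigl(e_k\Gamma\to\bigoplus_{k\to j}e_j\Gamma\bigr)$, and realize $\Phi$ as the derived Morita equivalence attached to $T$ after identifying its derived endomorphism algebra with $\Gamma_{\wt{\mu}_k(Q,W)}$.

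As a proof, however, this is a plan rather than an argument, and the places where it cuts corners are exactly where the work lies. First, the claim that $\Hom_{\der(\Gamma)}(T,\Sigma^nT)=0$ for $n>0$ ``follows from $\Gamma$ being concentrated in non-positive degrees together with the two-term shape of $T_k$'' is not correct as stated: for $n=1$ this is a genuine condition (for an ordinary algebra, i.e.\ a dg algebra in degree $0$, a two-term complex of projectives is presilting only under a $\tau$-rigidity-type hypothesis). The vanishing does hold here, but the verification must use the specific shape of the degree-zero map --- it is given by \emph{all} the arrows leaving $k$, and $Q$ has no loops at $k$, so every nontrivial path starting at $k$ factors through one of these arrows; one checks via the long exact sequences coming from the defining triangle of $T_k$ that the relevant cokernels vanish. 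Second, the step you yourself call the crux --- a quasi-isomorphism $\Gamma_{\wt{\mu}_k(Q,W)}\to B=\RHom_\Gamma(T,T)$ respecting the idempotents --- is the entire content of the theorem, and saying that the generators and differentials ``match'' is not a proof. Moreover, the stated obstacle about vanishing of higher $A_\infty$-operations is misplaced: computing $\RHom_\Gamma(T,T)$ with a cofibrant resolution of $T$ automatically yields a genuine dg algebra; what actually has to be done, as in \cite{KellerYang11}, is to equip $T$ with an explicit $\Gamma_{\wt{\mu}_k(Q,W)}$-$\Gamma$-bimodule structure (equivalently, an explicit dg algebra morphism to the dg endomorphism algebra) sending the reversed arrows at $k$, the degree $-1$ arrows and the degree $-2$ loops to concrete morphisms, and then to prove this morphism is a quasi-isomorphism by computing the homology of the Hom-complexes; it is only there that the corrected potential of the premutation and the relation $d(t_i)=e_i(\sum_a[a,a^*])e_i$ enter. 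Until that identification is carried out, the theorem has not been proved; with it, your outline is the standard Keller--Yang argument.
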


\subsection{A short proof of sign-coherence conjecture for skew-symmetric cluster algebras}
Let $\ca(\wt{B})$ be a skew-symmetric cluster algebra with principal coefficients.
We fix a cluster pattern of $\ca(\wt{B})$ by assigning the initial seed $(\wt{B},\mathbf{x})$ to the vertex $t_0\in \mathbb{T}_n$.

Let $Q=(Q_0,Q_1)$ be a finite quiver without loops nor $2$-cycles with vertex set $Q_0=\{1,2,\cdots,n\}$ such that $B(Q)$ is the principal part of the initial matrix $\widetilde{B}$.
We define a new quiver $\widetilde{Q}$ such that the set of vertices $\widetilde{Q}_0=Q_0\cup \{1+n,2+n, \cdots, 2n\}$ and the set of arrows $\widetilde{Q}_1=Q_1\cup\{i+n\to i|i\in Q_0\}$.
Let $W$ be a non-degenerate potential on $\widetilde{Q}$.
We may assign each vertex $t\in \mathbb{T}_n$ a quiver with potential $(\widetilde{Q}_t, W_t)$ which can be obtained from $(\widetilde{Q},W)$ by iterated mutations of $\mu_k$ for $1\leq k\leq n$ such that the quivers with potentials assigned to $t$ and $t'$ linked by an edge labeled $k$ are obtained from each other by the mutation  $\mu_k$.
Let $(\widetilde{Q}_{t_0},W_{t_0})$ be the quiver with potential $(\widetilde{Q},W)$. For each quiver with potential $(\widetilde{Q}_t,W_t)$, let $B(\widetilde{Q}_t)$ be the corresponding skew-symmetric matrix and $B(\widetilde{Q}_t)^\circ$  the submatrix of $B(\widetilde{Q}_t)$ formed by the first $n$ columns.
Recall that for each vertex $t\in \mathbb{T}_n$, we have a seed $(\wt{B}_t, \mathbf{x}_t)$ by the fixed cluster pattern.
By Proposition $7.1$ in ~\cite{DWZ08}, we know that $B(\widetilde{Q}_t)^\circ=\wt{B}_t$ for all $t\in \mathbb{T}_n$. Let $C_t=(c_{ij}^t)\in M_n(\Z)$ be the coefficient part of $\widetilde{B}_t$, we clearly have
\[c^t_{ij}=|\{\text{arrows from vertex $i+n$ to vertex $j$}\}|-|\{\text{arrows from vertex $j$ to vertex $i+n$}\}|.
\]

Note that we have $\Hom_{\der(\Gamma_{(Q,W)})}(e_{i+n}\Gamma_{(Q,W)}, e_{j+n}\Gamma_{(Q,W)})=0$ for any $1\leq i\neq j\leq n$. It follows from Theorem~\ref{t:Keller-Yang} that there is no arrow between vertex $i+n$ and  $j+n$ in the  quiver $\widetilde{Q}_t$ for any $t\in \mathbb{T}_n$.
Suppose that there is a vertex $t\in \mathbb{T}_n$ such that the $k$th column vector of $C_t$ is not sign-coherence. Hence there exist vertices $i+n$ and $j+n$ for $1\leq  i\neq j\leq n$ such that $c^t_{ik}>0$ and $c^t_{jk}<0$. Now consider the mutation at vertex $k$, we obtain that in the quiver with potential $\mu_k(\widetilde{Q}_t,W_t)$ there are $c^t_{ik}\times c^t_{jk}$ arrows from $i+n$ to $j+n$,  a contradiction.

\end{appendix}


\begin{thebibliography}{10}
\bibitem{ASS06}
I.  Assem, D. Simson and A. Skowro\'{n}sky, \emph{Elements of the Representation Theory of Associative Algebras, Volume 1 Techniques of Representation Theory}, London Math. Soc. Students Texts \textbf{65}, Cambridge University press, 2006.
\bibitem{AS81}
M. Auslander and S.O. Smal{\o}, \emph{Almost split sequences in subcategories}, J. Algebra \textbf{69}(1981), 426-454. Addendum: J. Algebra \textbf{71}(1981),592-594.


\bibitem{AIR12}
T. Adachi, O. Iyama and I. Reiten, \emph{$\tau$-tilting theory}, Compos. Math. \textbf{150}(2014), no. 3, 415--452.
\bibitem{AI12}
T. Aihara and O. Iyama, \emph{Silting mutation in triangulated categories}, J. London Math. Soc. \textbf{85}(2012), no.3, 633-668.

\bibitem{B81}
K. Bongartz, \emph{Tilted algebras}, In: Representations of algebras, Springer Lect. Notes Math. \textbf{903}(1981),26-38.
\bibitem{BY13}
T. Br\"{u}stle and D. Yang, \emph{Ordered Exchange graphs},  Advances in Representation Theory of Algebras (ICRA 2012), Euro. Math. Soc., 135-193, 2013.

%\bibitem{BIRS09}
%A. B.~Buan, O. Iyama, I. Reiten and J. Scott, \emph{Cluster structures for 2-Calabi-Yau categories and unipotent groups},
 %Compos. Math. \textbf{145}(2009), no. 4, 1035--1079.
\bibitem{BMR07}
A. B. Buan, R. Marsh and I. Reiten, \emph{Cluster-tilted algebras}, Trans. Amer. Math. Soc. \textbf{359}(2007), no.1, 323-332.

\bibitem{BMRRT}
A. B. Buan, R. Marsh,  M. Reineke, I. Reiten and G. Todorov, \emph{Tilting theory and cluster combinatroics}, Adv. Math. \textbf{204}(2006),  no.2, 572-618.


\bibitem{Demonet10}
L. Demonet, \emph{Mutations of group species with potentials and their representations. Applications to cluster algebras}, arXiv:1003.5078.


\bibitem{DWZ08}
H. Derksen, J. Weyman and A. Zelevinsky, \emph{Quivers with potentials and their representations I: Mutations},  {Selecta Math.} \textbf{14}(2008), no.1, {59-119}.


\bibitem{DWZ10}
\bysame, \emph{Quivers with potentials and their representations II: Application to cluster algebras}, J. Amer. Math. Soc. \textbf{23}(2010), 749-790.

\bibitem{FST12}
A. Felikson, M. Shapiro and P. Tumarkin, \emph{Cluster algebras of finite mutation type via unfoldings},
Int. Math. Res. Notices \textbf{2012}(2012), 1768-1804.
\bibitem{FZ02}
S. Fomin and A. Zelevinsky, \emph{Cluster algebras. I. Foundations}, J. Amer. Math. Soc.
\textbf{15}(2002), no. 2, 497-529 (electronic).

\bibitem{FZ03}
\bysame, \emph{Cluster algebras. II. Finite type classification}, Invent. Math. \textbf{154} (2003), no. 1,
63-121.


\bibitem{FZ07}
\bysame, \emph{Cluster algebras IV: Coefficients}, Composito Math. \textbf{143}(2007), 112-164.

\bibitem{FuGengLiu}
C. Fu, S. Geng and P. Liu, \emph{Algebras arising from cluster tubes and cluster algebras of type $\mathrm{C}$}, preprint.


\bibitem{FuKeller10}
C. Fu and B. Keller, \emph{On cluster algebras with coefficients and $2$-Calabi-Yau categories}, Trans. Amer. Math. Soc. \textbf{362}(2010), 859-895.


\bibitem{FuPeng14}
C. Fu and L. Peng, \emph{On Borcherds type Lie algebras}, Math. Zeit. \textbf{278}(2014), 705-742.

\bibitem{GL87}
W. Geigle and H. Lenzing, \emph{A class of weighted projective lines arising in representation theory of finite dimensional algebras}, Lect. Notes Math. \textbf{1273}(1987),265-297.
\bibitem{Ginzburg06}
V. Ginzburg, \emph{Calabi-Yau algebras}, arXiv:0612139v3.
\bibitem{GHKK14}
M. Gross, P. Hacking, S. Keel and M. Kontsevich, \emph{Canonical bases for cluster algebras}, arXiv:1411.1394.
\bibitem{H01}
D. Happel, \emph{A characterization of hereditary categories with tilting object}, Invent. Math. \textbf{144}(2001), 381-398.
\bibitem{HR82}
D. Happel and C.M. Ringel, \emph{Tilted algebras}, Trans. Amer. Math. Soc. \textbf{274}(1982), 399-443.

\bibitem{Keller05}
B. Keller, \emph{On triangulated orbit categories}, Doc. Math. \textbf{10}(2005), 551-581.
\bibitem{Keller12}
\bysame, \emph{Cluster algebras and derived categories}, European Mathematical Society Series
of Congress Reports, viii+346 pp. Zurich: European Mathematical Society, 2012.
%\bibitem{Keller13}
%\bysame, \emph{The periodicity conjecture for pairs of Dynkin diagrams}, Ann. Math. \textbf{177}(2013),  no. 1, 111-170.

\bibitem{KR07}
B. Keller and I. Reiten, \emph{Cluster-tilted algebras are Gorenstein and stably Calabi-Yau}, Adv. Math. \textbf{211}(2007), no.1,  123-151.

\bibitem{KellerYang11}
B. Keller and D. Yang,  \emph{Derived equivalences from mutations of quivers with potential},  Adv. Math.  \textbf{226} (2011),  2118-2168.
\bibitem{KY12}
S. K\"{o}nig and D. Yang, \emph{Silting objects, simple-minded collections, $t$-structures and co-$t$-structures for finite-dimensional algebras},  Doc. Math. \textbf{19}(2014), 403-438.

\bibitem{Lenzing07}
H. Lenzing, \emph{Hereditary categories}, Chapter $6$ of Handbook of tilting theory,  London Math. Soc. Lecture Notes \textbf{332}, 105-146.

\bibitem{Mizuno14}
Y. Mizuno, \emph{Classifying $\tau$-tilting modules over preprojective algebras of Dynkin type}, Math. Z. \textbf{277}(2014), 665-690.

\bibitem{Nagao13}
K. Nagao, \emph{Donaldson-Thomas theory and cluster algebras}, Duke Math. J. \textbf{162}(2013), no.7,  1313-1367.

\bibitem{Chavez13}
A. N\'{a}jera Ch\'{a}vez, \emph{On the $c$-vectors of an acyclic cluster algebra}, Int. Math. Res. Notices(2013), doi: 10.1093/imrn/rnt264.
\bibitem{Chavez14}
\bysame, \emph{$c$-vectors and dimension vectors for cluster-finite quivers},  Bull. London Math. Soc. \textbf{45}(2013), 1259-1266.



\bibitem{N11}
T. Nakanishi, \emph{Periodicities in cluster algebras and dilogarithm identities},
in Representations of algebras and related topics (A. Skowronski and K. Yamagata, eds.),
EMS Series of Congress Reports, European Mathematical Society, 2011, pp.407-444.





\bibitem{NS14}
T. Nakanishi and S. Stella, \emph{Diagrammatic description of $c$-vectors and $d$-vectors of cluster algebras of finite type}, Electron. J. Combin. \textbf{21}(2014), 107 pages.

\bibitem{NZ12}
T. Nakanishi and A. Zelevinsky, \emph{On tropical dualities in cluster algebras}, Contemp. Math. \textbf{565}(2012), 217-226.

\bibitem{PengXiao00}
L. Peng and J. Xiao, \emph{Triangulated categories and Kac-Moody algebras}, Invent. Math. \textbf{140}(2000), 563-603.


\bibitem{Plamondon11}
P. Plamondon, \emph{Cluster algebras via cluster categories with infinite-dimensional morphism spaces}, Composito Math. \textbf{147}(2011),1921-1954.

\bibitem{Seven11}
A. Seven, \emph{Cluster algebras and semipositive symmetrizable matrices}, Trans. Amer. Math. Soc. \textbf{363}(5)(2011), 2733-2762.

\bibitem{Seven13}
\bysame, \emph{Mutation classes of finite type cluster algebras
with principal coefficients}, Linear Algebra and its Application \textbf{438}(2013), 4584-4594.

\bibitem{Slodowy86}
P. Slodowy, \emph{Beyond Kac-Moody algebras and inside}, Can. Math. Soc. Conf. Proc. \textbf{5}(1986), 361-371.

\bibitem{ST12}
D. Speyer and H. Thomas, \emph{Acyclic cluster algebras revisited}, Algebras Quivers and Representations, Abel Symposium \textbf{8}(2013), 275-298.
\end{thebibliography}
\end{document}